\newtheorem{theorem}{Theorem}[section]
\newtheorem{lem}[theorem]{Lemma}
\newtheorem{proposition}[theorem]{Proposition}
\newtheorem{corollary}[theorem]{Corollary}
\newtheorem{conjecture}[theorem]{Conjecture}
\newtheorem{problem}[theorem]{Problem}
\newtheorem{example}[theorem]{Example}
\theoremstyle{definition}
\newtheorem{definition}[theorem]{Definition}
\newtheorem{algorithm}[theorem]{Algorithm}
\theoremstyle{remark}
\newtheorem{remark}[theorem]{Remark}
\numberwithin{equation}{section}
\title[Estimates on the dimension of self-similar measures]{Estimates on the dimension of self-similar measures with overlaps}
\author{De-Jun FENG}
\address{Department of Mathematics\\
The Chinese University of Hong Kong\\
Shatin,  Hong Kong\\}
\curraddr{}
\email{\href{mailto:djfeng@math.cuhk.edu.hk}{djfeng@math.cuhk.edu.hk}}
\thanks{}
\author{Zhou Feng}
\address{Department of Mathematics\\
The Chinese University of Hong Kong\\
Shatin,  Hong Kong\\}
\curraddr{}
\email{\href{mailto:zfeng@math.cuhk.edu.hk}{zfeng@math.cuhk.edu.hk}}
\thanks{}
\subjclass[2010]{Primary  28A75, Secondary 37A35, 28A80, 11R06.}
\keywords{Dimension, self-similar measures, Bernoulli convolutions, conditional entropy, Pisot number}
\date{}
\newcommand{\calB}{\mathcal{B}}
\newcommand{\calP}{\mathcal{P}}
\newcommand{\N}{{\Bbb N}}
\newcommand{\R}{{\Bbb R}}
\newcommand{\huaD}{\mathscr{D}}
\newcommand{\huaA}{\mathscr{A}}
\newcommand{\huaE}{\mathscr{E}}
\newcommand{\huaC}{\mathscr{C}}
\newcommand{\bfR}{\mathbb{R}}
\newcommand{\bfN}{\mathbb{N}}
\newcommand{\erfenzi}{\frac{1}{2}}
\newcommand{\Erfenzi}{\dfrac{1}{2}}
\newcommand{\Union}[2]{\overset{#2}{\underset{#1}{\bigcup}}}
\newcommand{\unitinterval}{[0,1]}
\DeclareMathOperator{\diam}{diam}
\newcommand{\mubeta}{\mu_{\beta}}
\newcommand{\dimmubeta}{\dim(\mu_{\beta})}
\newcommand{\dimmu}[1]{\dim_{H}(\mu_{#1})}
\newcommand{\eucild}[1]{\mathbb{R}^{d}}
\newcommand{\conditionE}{H_{m}(\calP\vert\pi^{-1}\calB(\eucild{d}))}
\newcommand{\conditionD}[1]{H_{m}(\calP\vert\pi^{-1}\huaD_{#1})}
\newcommand{\conditionA}[1]{H_{m}(\calP\vert\pi^{-1}\huaA_{#1})}
\newcommand{\conditionhuaE}[1]{H_{m}(\calP\vert\pi^{-1}\huaE_{#1})}
\newcommand{\nliminf}{\lim_{n\to\infty}}
\newcommand{\nlimsup}{\underset{n\to\infty}{\limsup}}
\newcommand{\sigalgebra}{\mbox{$\sigma$-algebra }}
\newcommand{\subsigalgebra}{\mbox{sub-$\sigma$-algebra }}
\newcommand{\mubetanterval}[2]{\mu_{\beta}([#1,#2])}
\newcommand{\mubetaprimenterval}[2]{\mu_{\beta'}\left([#1,#2]\right)}
\newcommand{\abs}[1]{\left\lvert #1 \right\rvert}
\newcommand{\Braced}[1]{\left \lbrace #1 \right \rbrace }
\newcommand{\inversebeta}{\dfrac{1}{\beta}}
\newcommand{\inversebetaprime}{\dfrac{1}{\beta'}}
\newcommand{\onetwointerval}{(1,2)}
\newcommand{\codingmap}[2]{S_{\beta^{#1},\varepsilon_{1}}\circ \cdots \circ S_{\beta^{#1},\varepsilon_{n}}\left(#2\right)    }
\newcommand{\distance}{\abs{\beta-\beta'}}
\newcommand{\optest}{\dfrac{\distance}{\beta\beta'\left[\min(\beta,\beta')-1\right]}}
\newcommand{\gfun}[1]{g_{\varepsilon}(#1) }
\newcommand{\gprimfun}[1]{g_{\varepsilon}'(#1) }
\newcommand{\normed}[1]{\left \lVert #1 \right \rVert}
\begin{document}
\begin{abstract}
	In this paper, we provide an algorithm to estimate from below the dimension of self-similar measures with overlaps. As an application, we show that for any $ \beta\in(1,2) $, the dimension of the Bernoulli convolution $ \mubeta $ satisfies
	\[ \dimmubeta \geq 0.98040856,\]
	which improves a previous uniform lower bound $0.82$ obtained by Hare and Sidorov \cite{HareSidorov2018}.
This new uniform lower bound is very close to the known numerical approximation $ 0.98040931953\pm 10^{-11}$ for $\dim \mu_{\beta_3}$, where $ \beta_{3} \approx 1.839286755214161$ is the largest root of the polynomial $ x^{3}-x^{2}-x-1$.
	Moreover, the infimum $\inf_{\beta\in (1,2)}\dimmubeta  $ is attained at a parameter $\beta_*$ in a small interval
	\[ (\beta_{3} -10^{-8},  \beta_{3} + 10^{-8}).\]
	 When $\beta$ is a Pisot number,   we express $\dimmubeta$ in terms of the measure-theoretic entropy of the equilibrium measure for certain matrix pressure function, and present an algorithm to estimate $\dim (\mu_\beta)$ from above as well.
\end{abstract}

\maketitle

\section{Introduction}
\label{S-1}

This paper is devoted to the dimension estimations of self-similar measures with overlaps.

Let us first introduce some notation and definitions. By an iterated function system (IFS) on $\R^d$ we mean a finite family   $\{S_i\}_{i=1}^\ell$ of  contracting transformations on $ \R^d$.  By Hutchinson \cite{Hutchinson1981}, for a given IFS $\{S_i\}_{i=1}^\ell$ on $\R^d$ there is a unique nonempty compact set $ K \subset \eucild{d}  $ such that
\[ K= \Union{i=1}{\ell} \: S_{i}(K).\]
The set $ K $ is called the {\it attractor} of   $\{S_i\}_{i=1}^\ell$. Moreover, $K$ is called a {\it self-similar set} if all $ S_{i} $ are similarities, and a {\it self-affine set} if all $ S_{i} $ are affine maps.

Let $(p_{1},\ldots, p_{\ell}) $ be a probability vector, that is, $ p_{i}>0 $ for all $i$ and   $ \sum_{i=1}^\ell p_{i} =1 $. It is well-known  \cite{Hutchinson1981} that there is a unique Borel probability measure $\mu$ on $ \R^d$ such that
  \[ \mu =\sum_{i=1}^{\ell} p_{i}\mu\circ S_{i}^{-1}. \]
Moreover, $ \mu $ is supported on $ K $.  We call $ \mu $ the {\it stationary measure} associated with $ \{S_i\}_{i=1}^\ell $ and $(p_1,\ldots, p_\ell)$.
In particular, $ \mu $ is said to be {\it self-similar} if $ S_{i} $ are all similarities and {\it self-affine} if $ S_{i} $ are all affine maps.

It is known that every self-similar measure $\mu$ on $\R^d$ is exact dimensional  (see \cite{FengHu2009}), that is to say, there exists a constant $C$ such that
$$
\lim_{r\to 0}\frac{\log \mu(B_r(x))}{\log r} =C
$$
for $\mu$-a.e.~$x\in \R^d$, where $B_r(x)$ stands for the closed ball of radius $r$ centred at $x$.  We write $\dim(\mu)$ for this constant and call it the {\it dimension} of $\mu$.

One of the fundamental questions in fractal geometry is to determine the dimension of self-similar  measures.  So far this question has been well-understood when the underlying IFS satisfies the open set condition \cite{Hutchinson1981} or the exponential separation condition \cite{Hochman2014, Hochman2015}. However, in the general overlapping case,  although there are some significant advances in recent years (see e.g. \cite{Rapaport2020a, Rapaport2020b, Varju2019} and the survey papers \cite{Hochman2018, Varju2018})  the question still remains wide open.

The present paper aims to provide some methods to estimate the dimension of self-similar measures from below and above. To state our result, define $ \varphi :[0,\infty)\to\bfR$ by $ \varphi(x)=-x\log x $. Set $\R_+=[0,\infty)$.   Define $f=f_\ell\colon \R_+^\ell\to \R$ by
\begin{equation}
\label{e-f1.1}
f(x_{1},\ldots,x_{\ell})=(x_{1}+\cdots+x_{\ell}) \sum_{i=1}^{\ell}\varphi\left(\dfrac{x_{i}}{x_{1}+\cdots+x_{\ell}}\right).
\end{equation}
The function $f$ is monotone increasing on $\R_+^\ell$ (see Lemma \ref{lmm2.3}).

For a Borel probability measure $\eta$ on $\R^d$, a finite collection $\huaD$ of Borel subsets of $\R^d$ is said to be a {\it finite Borel partition of $\R^d$ with respect to $\eta$} if
$\eta\left(\bigcup_{D\in \huaD}D\right)=1$ and $\eta(D\cap D')=0$ for  different elements $D, D'\in \huaD$. Our first result is the following.

\begin{theorem}
\label{thm-1.0}
Let $\mu$ be the self-similar measure associated with an IFS $\{S_i\}_{i=1}^\ell$ on $\R^d$ and a probability vector $(p_1,\ldots, p_\ell)$. Let $\rho_i$ denote the contraction ratio of $S_i$, $i=1,\ldots, \ell$.  Then the following properties hold.
\begin{itemize}
\item[(i)] For any finite Borel partition $\huaD$ of $\R^d$ with respect to $\mu$,
\begin{equation}
\label{e-p1.1}
\dim (\mu)\geq \frac{\left(\sum_{i=1}^\ell -p_i\log p_i\right) -\sum_{D\in \huaD} f\left(p_1\mu(S_1^{-1}D),\ldots, p_\ell\mu(S_\ell^{-1}D)\right) }{\sum_{i=1}^\ell -p_i\log \rho_i};
\end{equation}
consequently, if $p_i\mu(S_i^{-1}D)\leq y_i(D)$ for $1\leq i\leq \ell$ and $D\in \huaD$, then
\begin{equation}
\label{e-p1.2}
\dim(\mu)\geq \frac{\left(\sum_{i=1}^\ell -p_i\log p_i\right) -\sum_{D\in \huaD} f\left(y_1(D),\ldots, y_\ell(D)\right) }{\sum_{i=1}^\ell -p_i\log \rho_i}.
\end{equation}
\item[(ii)] Let $(\huaD_n)$ be a sequence of finite Borel partitions of $\R^d$ with respect to $\mu$ so that $\max_{D\in \huaD_n}{\rm diam}(D)\to 0$ as $n\to \infty$. Then
\[ \dim(\mu)=\dfrac{\left(\sum_{i=1}^\ell -p_i\log p_i\right) -\lim_{n\to \infty} \sum_{D\in \huaD_n} f\left(p_1\mu(S_1^{-1}D),\ldots, p_\ell\mu_\ell(S_\ell^{-1}D)\right) }{\sum_{i=1}^\ell -p_i\log \rho_i}.
\]
\end{itemize}
\end{theorem}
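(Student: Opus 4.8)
The plan is to realize $\mu$ as the pushforward of a Bernoulli measure under the coding map and to exploit the relation between the dimension of $\mu$ and a conditional entropy, where the conditioning is on the Borel $\sigma$-algebra of $\R^d$. Write $\Sigma=\{1,\dots,\ell\}^{\bfN}$, let $m$ be the Bernoulli measure on $\Sigma$ with weights $(p_1,\dots,p_\ell)$, let $\pi\colon\Sigma\to\R^d$ be the coding map $\pi(\omega)=\lim_n S_{\omega_1}\circ\cdots\circ S_{\omega_n}(0)$, and let $\mathcal P$ be the partition of $\Sigma$ according to the first coordinate. Then $\mu=\pi_* m$. The starting point is the known dimension formula for self-similar measures (from \cite{FengHu2009}): writing $h=\sum_i -p_i\log p_i$ for the entropy and $\chi=\sum_i -p_i\log\rho_i$ for the Lyapunov exponent, one has
\[
\dim(\mu)=\frac{h - H_m\bigl(\mathcal P\mid \pi^{-1}\calB(\R^d)\bigr)}{\chi},
\]
where $H_m(\mathcal P\mid\pi^{-1}\calB(\R^d))$ is the conditional entropy of $\mathcal P$ given the (pullback of the) Borel $\sigma$-algebra. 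So part (i) reduces to producing an \emph{upper} bound for this conditional entropy of the form $\sum_{D\in\huaD} f(p_1\mu(S_1^{-1}D),\dots,p_\ell\mu(S_\ell^{-1}D))$, and part (ii) reduces to showing that along the sequence $(\huaD_n)$ the corresponding sums converge \emph{to} $H_m(\mathcal P\mid\pi^{-1}\calB(\R^d))$.

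For part (i), the first step is to bound $H_m(\mathcal P\mid\pi^{-1}\calB(\R^d))$ from above by $H_m(\mathcal P\mid\pi^{-1}\sigma(\huaD))$ for an arbitrary finite Borel partition $\huaD$: conditioning on a smaller $\sigma$-algebra can only increase conditional entropy, and $\sigma(\huaD)\subseteq\calB(\R^d)$. The second step is to compute $H_m(\mathcal P\mid\pi^{-1}\sigma(\huaD))$ explicitly. Since $\huaD$ is finite, this conditional entropy is a sum over $D\in\huaD$ of $\mu(D)$ times the entropy of the conditional distribution of $\mathcal P$ given that $\pi(\omega)\in D$; and the probability that $\omega$ lies in the $i$-th atom of $\mathcal P$ and $\pi(\omega)\in D$ is exactly $m([i]\cap\pi^{-1}D)=p_i\cdot m\bigl(\sigma^{-1}\{\text{$\pi(\sigma\omega)\in S_i^{-1}D$}\}\bigr)=p_i\mu(S_i^{-1}D)$, using the self-similarity identity $\pi\circ\sigma_i=S_i\circ\pi$ (where $\sigma_i$ prepends the symbol $i$) together with $\mu=\pi_*m$. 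Unwinding the definition of $f$ in \eqref{e-f1.1} — note $\sum_i p_i\mu(S_i^{-1}D)=\mu(D)$, so $f(p_1\mu(S_1^{-1}D),\dots)=\mu(D)\sum_i\varphi(p_i\mu(S_i^{-1}D)/\mu(D))$, which is precisely $\mu(D)$ times the Shannon entropy of the conditional law — gives
\[
H_m\bigl(\mathcal P\mid\pi^{-1}\sigma(\huaD)\bigr)=\sum_{D\in\huaD} f\bigl(p_1\mu(S_1^{-1}D),\dots,p_\ell\mu(S_\ell^{-1}D)\bigr),
\]
and \eqref{e-p1.1} follows. The consequence \eqref{e-p1.2} is then immediate from the monotonicity of $f$ (Lemma \ref{lmm2.3}) applied term by term, since $p_i\mu(S_i^{-1}D)\le y_i(D)$.

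For part (ii), the key point is a martingale/limit argument: as $\max_{D\in\huaD_n}\diam(D)\to0$, the $\sigma$-algebras $\pi^{-1}\sigma(\huaD_n)$ increase (after refining, or more carefully using that any Borel set is approximated) to generate $\pi^{-1}\calB(\R^d)$ up to $m$-null sets, so by the martingale convergence theorem for conditional entropies (continuity of $H_m(\mathcal P\mid\cdot)$ along increasing sequences of $\sigma$-algebras — here one passes to the common refinements $\bigvee_{k\le n}\sigma(\huaD_k)$, whose entropies still equal the stated sums in the limit by the squeeze from part (i) below and above), we get
\[
\lim_{n\to\infty} H_m\bigl(\mathcal P\mid\pi^{-1}\sigma(\huaD_n)\bigr)=H_m\bigl(\mathcal P\mid\pi^{-1}\calB(\R^d)\bigr),
\]
and combining with the formula from part (i) and the dimension formula yields the stated equality for $\dim(\mu)$. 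The main obstacle is this last step: showing that $\sigma(\huaD_n)$-conditioning converges to Borel conditioning requires care because the $\huaD_n$ need not be nested, so one must either replace them by their joins and argue that the extra refinement does not change the limiting sum, or invoke an approximation theorem guaranteeing that partitions with vanishing diameter generate the Borel $\sigma$-algebra in the measure-algebra sense; controlling the conditional entropy under such approximation (rather than just the conditional measures) is the technical heart of the argument.
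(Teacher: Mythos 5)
Your part (i) is correct and is essentially the paper's own argument: reduce to the Feng--Hu formula $\dim(\mu)=\bigl(H_m(\calP)-H_m(\calP\mid\pi^{-1}\calB(\R^d))\bigr)/\chi$, bound the conditional entropy above by $H_m(\calP\mid\pi^{-1}\huaD)$ via monotonicity in the conditioning $\sigma$-algebra, and compute the latter exactly using $m([i]\cap\pi^{-1}D)=p_i\mu(S_i^{-1}D)$ together with the identity $f(x_1,\dots,x_\ell)=\sum_i\varphi(x_i)-\varphi(\sum_i x_i)$; the passage to \eqref{e-p1.2} by monotonicity of $f$ is also as in the paper.

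Part (ii), however, has a genuine gap, and it sits exactly where you flag the ``main obstacle.'' Your proposed squeeze via the joins $\bigvee_{k\le n}\sigma(\huaD_k)$ runs in the wrong direction: refining $\huaD_n$ to the join can only \emph{decrease} the conditional entropy, so the martingale convergence of $H_m\bigl(\calP\mid\bigvee_{k\le n}\pi^{-1}\sigma(\huaD_k)\bigr)$ to $H_m(\calP\mid\pi^{-1}\calB(\R^d))$, combined with part (i), reproves only the lower bound $H_m(\calP\mid\pi^{-1}\huaD_n)\ge H_m(\calP\mid\pi^{-1}\calB(\R^d))$, which you already have. What is actually needed is the upper bound $\limsup_n H_m(\calP\mid\pi^{-1}\huaD_n)\le H_m(\calP\mid\pi^{-1}\calB(\R^d))$, and no amount of refining delivers that. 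The paper closes this by a different device (its Lemma \ref{prop2.1}(ii)): fix an auxiliary \emph{increasing} sequence of partitions $\huaA_k$ of $K$ with diameters tending to $0$, so that martingale convergence gives $H_m(\calP\mid\pi^{-1}\huaA_k)\downarrow H_m(\calP\mid\pi^{-1}\calB(\R^d))$; choose $k$ with $H_m(\calP\mid\pi^{-1}\huaA_k)\le H_m(\calP\mid\pi^{-1}\calB(\R^d))+\varepsilon$; then, by Bowen's approximation lemma, for each large $n$ build a \emph{coarsening} $\huaE_n$ of $\huaD_n$ whose atoms approximate those of $\huaA_k$ in the sense $\mu(E_i^n\bigtriangleup A_i)\to 0$. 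Continuity of entropy under symmetric-difference approximation gives $H_m(\calP\mid\pi^{-1}\huaE_n)\to H_m(\calP\mid\pi^{-1}\huaA_k)$, and since $\huaD_n$ refines $\huaE_n$ one gets $H_m(\calP\mid\pi^{-1}\huaD_n)\le H_m(\calP\mid\pi^{-1}\huaE_n)$, yielding the missing upper bound. To complete your proof you need this (or an equivalent) approximation step; as written, part (ii) is not established.
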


The above theorem is based on a result of the first author and Hu \cite{FengHu2009} which  states that the dimension of a self-similar measure can be expressed in terms of the projection entropy.  The reader is referred to Section~\ref{S-2} for the involved notation and Theorem \ref{thm-2.1} for the details of this result.
Theorem~\ref{thm-1.0} then follows directly from some lower bound estimates on the projection entropy.  It provides a valid way to estimate  the dimension of general self-similar measures from below. Two examples (see Examples \ref{exam-4.1}--\ref{exam-4.2}) are given to illustrate this method.

As an interesting application,  we can apply the above method to obtain a new uniform lower bound on the dimension of Bernoulli convolutions.
Recall that for each  $\beta\in (1,2)$ the Bernoulli convolution  with parameter $\beta$, say $\mu_\beta$,  is the self-similar measure
associated with the IFS $\{\beta^{-1}x, \beta^{-1}x+1-\beta^{-1}\}$ on $\R$ and the probability vector $(1/2, 1/2)$.
Bernoulli convolutions are one of the most studied examples of overlapping self-similar measures and they are of great interest in fractal geometry (see e.g.~the survey articles \cite{PSS2000, Varju2018}).   It is known that $\dim(\mu_\beta)=1$ if $\beta$ is a transcendental number \cite{Varju2019}
and $\dim(\mu_\beta)<1$ if $\beta$ is a Pisot number \cite{Garsia1963}. Recall that a Pisot number  is an algebraic
integer all of whose Galois conjugates are inside the unit disk. So far Pisot numbers in $(1,2)$ are the only known examples of parameters $\beta$ with $\dim(\mu_\beta)<1$. As for other algebraic numbers, it is known that $\dim(\mu_\beta)=1$ if $\beta$ is an algebraic number which is not a root of a polynomial of coefficients 0 and $\pm 1$ \cite{Hochman2014}, or $\beta$ is an algebraic number with relatively large Mahler measure \cite{BV2020}, or $\beta$ is among some concrete examples of algebraic numbers with small degree \cite{AFKP2020,HKPS2019}.

In \cite{HareSidorov2018}  Hare and Sidorov  showed that $\dim (\mu_\beta)\geq 0.82$ for all $\beta\in (1,2)$. Their algorithm is based on the estimation for the maximal growth rate of overlapping times of the underlying IFS under iterations.  By applying the algorithm in Theorem \ref{thm-1.0}, we can provide a new uniform lower bound on the dimension of Bernoulli convolutions  (see Theorem \ref{thm-1.1}).
Before completing the writing of this paper, we were aware of a very recent independent work \cite{KPV2021} by Kleptsyn,  Pollicott and  Vytnova, who obtained a uniform lower bound  $0.96399$ on the Hausdorff dimension of Bernoulli convolutions through a different approach by estimating the $L^2$-dimension of Bernoulli convolutions from below.

 To state our result, let $\beta_3\approx 1.839286755214161$ be the tribonacci number, i.e., the largest root of the polynomial $x^3-x^2-x-1$.
A computable theoretical formula for $\dim\mu_{\beta_3}$ (expressed in a series) was independently obtained by  the first author \cite{Feng1999, Feng2005} and  Grabner et al.~\cite{Grabner2002}, with a corresponding numerical estimation
    \begin{equation}
    \label{e-1.0'}
    \dim(\mu_{\beta_3})\approx 0.98040931953 \pm 10^{-11}.
    \end{equation}

Now we are ready to state our uniform lower bound\footnote{Our algorithm and a previously obtained  uniform lower bound 0.980368 on $\dim\mu_\beta$,  were announced by the first author in the conference ``Number theory and dynamics'' at Cambridge in March 2019.}.

\begin{theorem}
\label{thm-1.1}
$
\dim(\mu_\beta)\geq 0.98040856
$
for all $\beta\in (1,2)$.
Moreover,  $\dim(\mu_\beta)>\dim(\mu_{\beta_3})$ if  $$\beta\in (\sqrt{2},2)\backslash (\beta_3-10^{-8},  \; \beta_3+10^{-8}).$$
\end{theorem}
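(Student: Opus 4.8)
The plan is to apply Theorem~\ref{thm-1.0}(i) to the Bernoulli convolution $\mu_\beta$, choosing, for each $\beta$, a well-tailored finite Borel partition $\huaD$ of $\R$ and rigorous upper bounds $y_i(D)$ for the quantities $p_i\mu_\beta(S_i^{-1}D)=\tfrac12\mu_\beta(S_i^{-1}D)$. With $\ell=2$, $p_1=p_2=1/2$, and $\rho_1=\rho_2=\beta^{-1}$, the denominator in \eqref{e-p1.2} is simply $\log\beta$ and the leading term of the numerator is $\log 2$, so the bound reads
\[
\dim(\mu_\beta)\geq \frac{\log 2-\sum_{D\in\huaD}f\bigl(y_1(D),y_2(D)\bigr)}{\log\beta}.
\]
The core task is therefore to make the ``overlap defect'' $\sum_D f(y_1(D),y_2(D))$ provably small for every $\beta\in(1,2)$, and to do so by a finite computation. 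I would take $\huaD$ to be a partition of (a neighbourhood of) the attractor $[0,1]$ into dyadic-type or $\beta$-adic-type intervals at some depth $n$, and estimate $\mu_\beta(S_i^{-1}D)$ using the self-similarity relation $\mu_\beta=\tfrac12\mu_\beta\circ S_1^{-1}+\tfrac12\mu_\beta\circ S_2^{-1}$ iterated $n$ times, which writes $\mu_\beta$ as an average of $2^n$ affine copies of itself; the measure of a small interval is then controlled by counting how many of these copies meet it, i.e.\ by the number of ``overlapping cylinders'' — exactly the combinatorial quantity Hare--Sidorov estimate, but here fed into the sharper entropy bound $f$ rather than into a crude volume bound.

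\emph{Reduction to finitely many $\beta$.} Since the right-hand side depends on $\beta$ continuously (the cylinder endpoints and hence the counting function are piecewise monotone in $\beta$, and $f$ is continuous and monotone by Lemma~\ref{lmm2.3}), I would partition the parameter interval $(1,2)$ — or rather, after elementary reductions, a compact subinterval such as $[\sqrt2,2)$, handling $\beta$ close to $1$ or $\sqrt2$ separately where the bound is easy — into finitely many small subintervals $[\beta',\beta'']$. On each subinterval one fixes a partition $\huaD$ designed for $\beta''$ and replaces each $\mu_\beta(S_i^{-1}D)$ by a rigorous upper bound $y_i(D)$ valid for all $\beta\in[\beta',\beta'']$ simultaneously; monotonicity of $f$ then yields a single lower bound for $\dim(\mu_\beta)$ on the whole subinterval. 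Summing up (i.e.\ taking the minimum over the finitely many subintervals) gives the uniform bound $0.98040856$. The ``moreover'' clause — that the infimum over $(\sqrt2,2)$ outside the tiny window around $\beta_3$ strictly exceeds $\dim(\mu_{\beta_3})$ — follows by running the same computation with a finer mesh and checking that every subinterval not meeting $(\beta_3-10^{-8},\beta_3+10^{-8})$ produces a bound strictly above the known value \eqref{e-1.0'} of $\dim(\mu_{\beta_3})$; here one uses that $0.98040856<\dim(\mu_{\beta_3})$ with room to spare.

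\emph{Making the estimates rigorous.} The delicate point is that the numerical bound must be a genuine mathematical inequality, not a floating-point artefact. For a fixed algebraic $\beta$ (in particular a Pisot number, for which $\beta$-adic expansions of the cylinder endpoints are eventually periodic and live in a finite-dimensional lattice) the cylinder structure can be described exactly by a finite automaton, so $\mu_\beta(S_i^{-1}D)$ can be bounded above by exact rational expressions. For a varying $\beta$ in a subinterval one controls the motion of the $2^n$ cylinder endpoints by their derivatives in $\beta$ (each endpoint is a polynomial in $\beta^{-1}$ with coefficients $0,\pm1$, whose derivative is easily bounded), thereby over-counting overlaps in a controlled way; all arithmetic is then done in interval arithmetic with outward rounding.

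\emph{Expected main obstacle.} The principal difficulty is a quantitative one rather than a conceptual one: the bound $0.98040856$ is within about $8\times10^{-7}$ of the conjectured infimum $\dim(\mu_{\beta_3})\approx0.98040932$, so the depth $n$ of the partition and the fineness of the $\beta$-mesh must be pushed far enough that the slack $\sum_D f(y_1(D),y_2(D))/\log\beta$ is squeezed below this tolerance uniformly — including near $\beta_3$ itself, where the overlaps are worst (tribonacci being the extremal Pisot-like parameter). Designing the partition so that the counting function, and hence $\sum_D f$, decays fast enough with $n$ — and proving a clean decay estimate rather than merely observing it numerically — is where the real work lies; this is also what forces the careful separate treatment of a neighbourhood of $\beta_3$, which is excluded from the ``moreover'' statement precisely because there the method cannot beat the true value.
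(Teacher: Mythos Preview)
Your proposal is correct and follows essentially the same route as the paper: apply Theorem~\ref{thm-1.0}(i) to the Bernoulli convolution, reduce the parameter range to a compact interval (the paper uses $[\sqrt{2},2]$ via the inequality $\dim\mu_\beta\geq\dim\mu_{\beta^k}$, handling $[\sqrt{2},1.42404]$ by the trivial bound $\log 2/(2\log\beta)$), cover the remaining range by finitely many tiny subintervals, and on each subinterval produce rigorous upper bounds $y_i(D)$ valid simultaneously for all parameters in the subinterval so that the monotonicity of $f$ (Lemma~\ref{lmm2.3}) yields a single lower bound there.

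A few execution details differ from your sketch, none of them essential. First, the paper anchors the partition $\huaD_{N,\beta}$ at the \emph{left} endpoint $\beta$ of each subinterval and proves the perturbation inequality $\mu_{\beta'}(S_{i,\beta'}^{-1}[a,b])\leq \mu_\beta(S_{i,\beta}^{-1}[a-\epsilon,b+\epsilon])$ for all $\beta'\in[\beta,\beta+\delta]$, via a derivative bound on the coding map (Lemma~\ref{lem-4.4}/\ref{lem-5.4}); this is exactly your ``control the motion of cylinder endpoints by their derivatives'' idea, made into a clean lemma (Proposition~\ref{pro-4.3}). Second, the upper bounds on $\mu_\beta(A)$ are produced not by a single-depth cylinder count but by an adaptive iterative scheme (Algorithm~\ref{alg-3.8}) that drops empty preimages and absorbs full ones at each step, with a small $\gamma$-inflation to swallow floating-point rounding; this is sharper than straight counting but in the same spirit. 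Finally, no automata or exact Pisot arithmetic are used in the proof of Theorem~\ref{thm-1.1} itself---that machinery appears only in the separate upper-bound story of Theorem~\ref{thm-1.4}.
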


It can be proved (see Lemma~\ref{lem-6.3}) that there exists $\beta_*\in (\sqrt{2},2)$ such that $$\dim(\mu_{\beta_*})=\inf_{\beta\in (1,2)}\dim (\mu_\beta).$$
 According to Theorem \ref{thm-1.1},  $\beta_*\in (\beta_3-10^{-8},  \; \beta_3+10^{-8})$.  This leads to the following.

\begin{conjecture}
$\beta_*=\beta_3$. Moreover, $\dim(\mu_\beta)>\dim(\mu_{\beta_3})$ if $\beta\in (1,2)\backslash \{\beta_3\}$.
\end{conjecture}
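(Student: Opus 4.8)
The final statement is a conjecture rather than a theorem, so the following is a programme, with one essential gap flagged explicitly. The hard half of the picture is already localized: by Theorem~\ref{thm-1.1} together with Lemma~\ref{lem-6.3} (as recorded in the text above) one has $\beta_*\in I:=(\beta_3-10^{-8},\,\beta_3+10^{-8})$, so the assertion ``$\beta_*=\beta_3$'' follows at once from the sharper claim that $\dim(\mu_\beta)>\dim(\mu_{\beta_3})$ for every $\beta\in I\setminus\{\beta_3\}$, and the ``moreover'' clause follows from that same strict inequality together with its analogue on the remaining range $(1,\sqrt2]$. The range $(1,\sqrt2]$ I would handle by the interval-arithmetic scheme underlying Theorem~\ref{thm-1.1}: split a compact piece $[1+\delta,\sqrt2]$ into finitely many short parameter intervals and run the rigorous lower bound \eqref{e-p1.2} on each to beat the value $0.9804093\ldots$ of $\dim(\mu_{\beta_3})$; the boundary regime $\beta\to1^+$ is dealt with by a soft argument (a continuity/monotonicity property of $\beta\mapsto\dim(\mu_\beta)$ near $1$, together with $\dim(\mu_{\sqrt2})=1$, which holds since $\sqrt2$ is not a root of any $\{0,\pm1\}$-polynomial \cite{Hochman2014}).

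Thus all the difficulty lies in the punctured interval $I\setminus\{\beta_3\}$, where $\beta\mapsto\dim(\mu_\beta)$ is violently irregular. It equals $1$ for every transcendental $\beta\in I$ \cite{Varju2019}, for every algebraic $\beta\in I$ that is not a root of a $\{0,\pm1\}$-polynomial \cite{Hochman2014}, and for every algebraic $\beta\in I$ of sufficiently large Mahler measure \cite{BV2020}; for all such $\beta$ one has $\dim(\mu_\beta)=1>\dim(\mu_{\beta_3})$, and nothing is needed. Consequently the conjecture on $I\setminus\{\beta_3\}$ reduces to the set $E\subseteq I\setminus\{\beta_3\}$ of ``arithmetically special'' parameters --- roots of $\{0,\pm1\}$-polynomials of small Mahler measure, chiefly Pisot (and Salem) numbers within $10^{-8}$ of $\beta_3$. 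For a \emph{single} $\beta\in E$ one can, in principle, certify $\dim(\mu_\beta)>\dim(\mu_{\beta_3})$ by a finite rigorous computation: the lower-bound algorithm of Theorem~\ref{thm-1.0} --- or, when $\beta$ is Pisot, the matched upper and lower estimates for $\dim(\mu_\beta)$ developed later in the paper --- pin $\dim(\mu_\beta)$ to arbitrary precision, to be compared with the series formula for $\dim(\mu_{\beta_3})$ of \cite{Feng1999, Feng2005, Grabner2002} and its value \eqref{e-1.0'}.

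\textbf{The main obstacle is uniformity over $E$.} Two things could go wrong, and with current techniques I see no way around either. First, $E$ need not be finite: there is no known bound on the degree --- hence on the complexity --- of a root of a $\{0,\pm1\}$-polynomial that can lie within $10^{-8}$ of $\beta_3$ while still satisfying $\dim(\mu_\beta)<1$, and accumulation of such parameters at $\beta_3$ itself cannot be excluded a priori, so one cannot enumerate and check the members of $E$. Second, even were $E$ finite, a parameter $\beta\in E$ extremely close to $\beta_3$ would plausibly have $\dim(\mu_\beta)$ extremely close to $\dim(\mu_{\beta_3})\approx0.9804093$ --- too close for the present algorithms to separate without impractically fine discretizations. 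What the conjecture really demands is a \emph{quantitative, parameter-free} statement on $I\setminus\{\beta_3\}$: either that the lower bound \eqref{e-p1.2} at some fixed partition depth already exceeds $\dim(\mu_{\beta_3})$ throughout $I\setminus\{\beta_3\}$ --- which is exactly what \emph{fails} near $\beta_3$, where that bound degrades --- or a second-order analysis showing that $\dim(\mu_{\beta_3})$ is a \emph{strict} local minimum of $\beta\mapsto\dim(\mu_\beta)$ restricted to $E$, with an explicit modulus obtained from continuity of the projection-entropy (equivalently, matrix-pressure) functional near the tribonacci parameter, which one would then combine with \eqref{e-1.0'} to close $I$. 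Producing such uniform control over arithmetically special parameters of unbounded complexity accumulating at $\beta_3$ is, to my mind, the genuinely hard step, and presumably why this is stated as a conjecture.
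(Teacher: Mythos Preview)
This statement is a \emph{conjecture} in the paper; no proof is offered there, and you correctly recognise this, framing your submission as a programme with an explicitly flagged gap rather than a proof. Your diagnosis of the obstruction is accurate and matches the paper's own viewpoint: Theorem~\ref{thm-1.1} together with Lemma~\ref{lem-6.3} already localises $\beta_*$ to $(\beta_3-10^{-8},\beta_3+10^{-8})$, and the remaining difficulty is precisely the lack of any uniform mechanism to certify $\dim(\mu_\beta)>\dim(\mu_{\beta_3})$ for arithmetically special parameters accumulating at $\beta_3$.

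Two minor remarks on your programme. First, your treatment of $(1,\sqrt2]$ via fresh interval arithmetic on $[1+\delta,\sqrt2]$ is not the route the paper takes and is not quite straightforward, since Proposition~\ref{pro-4.3} is only stated for $\beta\ge\sqrt2$; the paper's approach is instead to invoke Lemma~\ref{lem-4.1}(i), i.e.\ $\dim(\mu_\beta)\ge\dim(\mu_{\beta^k})$, to transfer everything into $[\sqrt2,2)$. That reduction, however, inherits the same gap you identify: if $\beta^k$ happens to land in the tiny window around $\beta_3$, one does not obtain a strict inequality without further work. Second, your remark that $E$ (the special parameters near $\beta_3$) need not be finite is well taken for roots of $\{0,\pm1\}$-polynomials in general, but note that the \emph{Pisot} numbers form a closed subset of $\R$, so the Pisot points of $E$ are at least discrete; the genuine uncertainty concerns non-Pisot algebraic parameters for which $\dim(\mu_\beta)<1$ has never been established. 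None of this changes the verdict: the step you flag as missing is exactly the one that makes this a conjecture.
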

In the remaining part of this section, we turn to the question how to  estimate $\dim(\mu_\beta)$ with small error when $\beta$ is  a Pisot number. So far, this question  has only been answered in the special case when $\beta=\beta_n$, $n=2,3,\ldots$,  where $ \beta_{n} $ is the largest root of the polynomial $ x^{n}-x^{n-1}-x^{n-2}-\cdots-x-1 $. In such situation, there are computable theoretical formulas for $ \dim(\mu_{\beta_{n}})$; see  \cite{AlexanderZagier1991} for the case when $n=2$ and \cite{Feng1999, Feng2005, Grabner2002} for the general case. This is due to the following special property of $ \mu_{\beta_{n}} $:  they are (locally) self-similar measures associated with  infinite IFSs with no overlaps (see e.g.~\cite{Feng1999, Feng2005}). However,  this property seems not to be generic in the Pisot cases.

In \cite{Lalley1998} Lalley showed that for each Pisot number $\beta\in (1,2)$, $ \dimmubeta $ can be expressed in  terms of the top Lyapunov exponent of a sequence of random matrix products.  Although Lalley provided an algorithm for the construction of these matrices, but the computation of Lyapunov exponents is a very difficult problem and  Lalley only  provided some numerical estimates on the dimension of the standard Bernoulli convolution (and its biased versions) associated with $\beta_2$.

Built on an early work of the first author \cite{Feng2003} and the thermodynamic formalism for matrix products,  for each Pisot number $\beta\in (1,2)$, in what follows  we will express $ \dimmubeta $ in terms of the entropy of the equilibrium measure for certain matrix pressure function, and give some computable upper bounds on $ \dimmubeta $ in terms of conditional entropies.

Recall that in  \cite{Feng2003},  for a given Pisot number $ \beta\in (1,2)$, we can construct a finite family of $d\times d$ non-negative matrices
$ A_{1}, \ldots,A_{k}$, where $d$ and $k$ depend on $\beta$,  so that \begin{equation}
\label{e-H}
H:=\sum_{i=1}^k A_{i} \end{equation}
 is irreducible (i.e., there exists an integer $p$ such that $\sum_{j=1}^p H^j$ is a strictly positive matrix), and moreover,
\begin{equation}
\label{e-dim}
\dim(\mu_\beta)=-\frac{P'(1)}{\log \beta},\qquad \tau_{\mu_\beta}(q)=-\frac{P(q)}{\log \beta}\;\mbox { for }\;q>0,
\end{equation}
where $P\colon (0,\infty)\to \R $ denotes the  pressure function associated with  $(A_{1},\ldots,A_{k}) $, which is defined by
\begin{equation}
\label{e-P}
P(q) = \nliminf \dfrac{1}{n} \log\left(\sum_{i_{1}\cdots i_{n}\in \{1,\ldots, k\}^n}\normed{A_{i_{1}}\cdots A_{i_{n}}}^{q} \right),
\end{equation}
here $\|\cdot\|$ is the standard matrix norm,  and $\tau_{\mu_\beta}(q)$ stands for the $L^q$-spectrum of $\mu_\beta$, which is defined by
\[ \tau_{\mu_\beta}(q)=\liminf_{r\to 0}\dfrac{\log\sum_{Q\in \mathcal Q_r}{\mu_\beta(Q)}^q}{\log r},\qquad q>0, \]
where $\mathcal Q_r:=\{ [jr, (j+1)r):\; j\in \mathbb{Z} \}$. See \cite[Theorems 1.3 and 5.2]{Feng2003}. By definition $\tau_{\mu_\beta}(1)=0$, it follows from \eqref{e-dim} that  $P(1)=0$ which implies that $1$ is the largest eigenvalue of $H$. Since $H$ is an irreducible non-negative matrix, it has  left and right positive eigenvectors associated to the eigenvalue $1$ (see e.g.~\cite[Theorem~8.4.4]{HornJohnson1985}).   Let ${\bf v}_L$ , ${\bf v}_R$ be the left and right positive eigenvectors of $H$ such that ${\bf v}_L\cdot{\bf v}_R=1$. Define a Borel probability measure $\eta$  on $\Sigma=\{1,\ldots, k\}^\N$ by
\begin{equation}
\label{e-G}
\eta\left([i_{1}\cdots i_{n}]\right) = {\bf v}_L  A_{i_{1}}\cdots A_{i_{n}} {\bf v}_R \quad \mbox{ for }\;  n\in \N\; \mbox{ and }\; i_1\cdots i_n\in \{1,\ldots, k\}^n,
\end{equation}
where $[i_{1}\cdots i_{n}]:=\left\{x=(x_j)_{j=1}^\infty\in \Sigma:\; x_j=i_j \mbox{ for } 1\leq j\leq n\right\}.$

It is readily checked that $\eta$ is indeed a probability measure and moreover, $\eta$ is $\sigma$-invariant, where $\sigma\colon \Sigma\to \Sigma$ is the left shift map defined by $(x_j)_{j=1}^\infty\mapsto (x_{j+1})_{j=1}^\infty$.
Let $h_\eta(\sigma)$ denote the measure-theoretic entropy of $\eta$ with respect to $\sigma$ (see \cite{Walters1982} for a definition). Now we can state our last result.

\begin{theorem}
\label{thm-1.4}
Let $\beta\in (1,2)$ be a Pisot number, and let $\eta$ be constructed as above. Then
$
\dim(\mu_\beta)=\dfrac{h_\eta(\sigma)}{\log \beta}.
$
Consequently, for each $n\in \N$,
\begin{equation}
\label{e-1.7}
\dim(\mu_\beta)\leq \frac{\sum_{I\in \{1,\ldots,k\}^{n+1}}\varphi\left(\eta([I]) \right) -
	\sum_{J\in \{1,\ldots,k\}^{n}}\varphi\left(\eta([J]) \right)}{\log \beta},
\end{equation}
where $\varphi(x)=-x\log x$.
\end{theorem}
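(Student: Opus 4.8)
The plan is to derive the identity $\dim(\mu_\beta)=h_\eta(\sigma)/\log\beta$ from the formula $\dim(\mu_\beta)=-P'(1)/\log\beta$ recorded in \eqref{e-dim} by showing that $-P'(1)=h_\eta(\sigma)$, and then to obtain the estimate \eqref{e-1.7} from this identity together with the elementary monotonicity of successive entropy increments. The main point is to recognise $\eta$ as an equilibrium state for the matrix pressure function $P$ at the parameter $q=1$.

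For the first step, I would recall that for every $\sigma$-invariant Borel probability measure $\nu$ on $\Sigma$ the limit $\Lambda(\nu):=\lim_{n\to\infty}\frac1n\int_\Sigma\log\|A_{x_1}\cdots A_{x_n}\|\,d\nu(x)$ exists, by Kingman's subadditive ergodic theorem applied to the subadditive cocycle $x\mapsto \log\|A_{x_1}\cdots A_{x_n}\|$ and the $\sigma$-invariant measure $\nu$, and that $P(q)$ is exactly the topological pressure of the subadditive potential $x\mapsto q\log\|A_{x_1}\cdots A_{x_n}\|$ (which is constant on each $n$-cylinder); the variational principle for subadditive potentials then gives $P(q)=\sup_\nu\{h_\nu(\sigma)+q\Lambda(\nu)\}$ for all $q>0$ (see, e.g., \cite{Feng2003} and the references therein). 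The key computation is that $\Lambda(\eta)=-h_\eta(\sigma)$: since each $A_i$ is non-negative and ${\bf v}_L,{\bf v}_R$ are strictly positive, there is a constant $C\ge1$ with
\[ C^{-1}\,\|A_{i_1}\cdots A_{i_n}\|\ \le\ {\bf v}_L A_{i_1}\cdots A_{i_n}{\bf v}_R=\eta([i_1\cdots i_n])\ \le\ C\,\|A_{i_1}\cdots A_{i_n}\| \]
for every word $i_1\cdots i_n$, so that $\log\|A_{x_1}\cdots A_{x_n}\|=\log\eta([x_1\cdots x_n])+O(1)$ uniformly in $x$ and $n$; integrating against $\eta$ and dividing by $n$ yields $\Lambda(\eta)=-\lim_{n\to\infty}\frac1n\sum_{I\in\{1,\ldots,k\}^n}\varphi(\eta([I]))=-h_\eta(\sigma)$, the last equality because the partition $\xi:=\{[1],\ldots,[k]\}$ into $1$-cylinders generates the Borel $\sigma$-algebra of $\Sigma$. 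Since $P(1)=0$, this gives $h_\eta(\sigma)+\Lambda(\eta)=0=P(1)$, so the affine function $q\mapsto h_\eta(\sigma)+q\Lambda(\eta)$ lies below the convex function $P$ on $(0,\infty)$ and is tangent to it at $q=1$. As $P$ is differentiable at $q=1$ (this is part of \eqref{e-dim}, cf.~\cite{Feng2003}), this supporting line must be the tangent line there, whence $P'(1)=\Lambda(\eta)=-h_\eta(\sigma)$ and therefore $\dim(\mu_\beta)=h_\eta(\sigma)/\log\beta$.

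For the second step I would invoke only classical entropy theory. With $\xi$ as above, one has $H_\eta\big(\bigvee_{i=0}^{m-1}\sigma^{-i}\xi\big)=\sum_{J\in\{1,\ldots,k\}^m}\varphi(\eta([J]))$, and the sequence $n\mapsto H_\eta\big(\bigvee_{i=0}^{n}\sigma^{-i}\xi\big)-H_\eta\big(\bigvee_{i=0}^{n-1}\sigma^{-i}\xi\big)=H_\eta\big(\sigma^{-n}\xi \mid \bigvee_{i=0}^{n-1}\sigma^{-i}\xi\big)$ is non-increasing in $n$ and decreases to $h_\eta(\sigma,\xi)=h_\eta(\sigma)$ (see, e.g., \cite{Walters1982}, using the $\sigma$-invariance of $\eta$). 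Consequently, for every $n\in\N$,
\[ h_\eta(\sigma)\ \le\ \sum_{I\in\{1,\ldots,k\}^{n+1}}\varphi(\eta([I]))-\sum_{J\in\{1,\ldots,k\}^{n}}\varphi(\eta([J])), \]
and dividing by $\log\beta>0$ and using the identity from the first step gives \eqref{e-1.7}.

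The whole difficulty lies in the first step, in the interplay between the matrix pressure $P$ and the measure $\eta$: that $\eta$ is an equilibrium state at $q=1$, and that the tangency forces $P'(1)=\Lambda(\eta)$. Both rely on facts already at our disposal --- the positivity of the Perron eigenvectors ${\bf v}_L,{\bf v}_R$, which follows from the irreducibility of $H=\sum_i A_i$ and is precisely what lets us replace the matrix norm by $\eta([\,\cdot\,])$ up to a bounded multiplicative factor, and the differentiability of $P$ at $q=1$ from \cite{Feng2003}, which upgrades the supporting-line inequality to the exact derivative identity. Once these are in hand, the rest is routine.
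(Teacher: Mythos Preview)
Your proposal is correct and follows the same thermodynamic-formalism route as the paper: both identify $\eta$ as the equilibrium (Gibbs) state for the matrix pressure at $q=1$ via the estimate $\eta([I])\asymp\|A_I\|$, and both deduce $P'(1)=-h_\eta(\sigma)$ from $P(1)=0$; the second part of the argument (monotonicity of the increments $H_\eta(\xi^{(n+1)})-H_\eta(\xi^{(n)})$ down to $h_\eta(\sigma)$) is identical.

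The only noteworthy difference is in packaging. The paper quotes from \cite{FengLau2002,Feng2004} the existence of the Gibbs family $\eta_q$, the variational identity $P(q)=h_{\eta_q}(\sigma)+q\,\Lambda(\eta_q)$ and the derivative formula $P'(q)=\Lambda(\eta_q)$, then sets $q=1$ and checks $\eta=\eta_1$. You bypass the whole family $\eta_q$ and the derivative formula: having computed $\Lambda(\eta)=-h_\eta(\sigma)$ directly, you use only the variational inequality $P(q)\ge h_\eta(\sigma)+q\,\Lambda(\eta)$, the equality at $q=1$, and the differentiability of $P$ at $1$ to read off $P'(1)=\Lambda(\eta)$ as the slope of the supporting line. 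Your version is therefore a bit more self-contained, needing only the subadditive variational principle and differentiability at a single point, whereas the paper's version imports stronger off-the-shelf results; either way the content is the same.
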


The inequality \eqref{e-1.7} provides a sequence of upper bounds on $\dim(\mu_\beta)$ for a general Pisot number $\beta$. When $\beta$ is of small degree, the dimension and the number of the constructed matrices $A_i$ are not very large, so one can use \eqref{e-1.7} to obtain reasonable upper bounds on $\dim(\mu_\beta)$. In the meantime one can use the algorithm in Theorem \ref{thm-1.0} to estimate $\dim(\mu_\beta)$ from below. Hence in this situation one can provide the estimates on $\dim(\mu_\beta)$ with small error. In Section \ref{S-7}, we list our computational results on $\dim(\mu_\beta)$ for some Pisot numbers of degree $3$ or $4$. For instance, let $\beta\approx
  1.465571231876768$ be the largest root of the polynomial $x^3-x^2-1$,
our computation shows that $\dim(\mu_\beta)\approx 0.99954470$ with an error $\leq 10^{-8}$.

It is worth pointing out that  the method of estimating projection entropies can also be used to find lower bounds on the dimension of  self-affine measures generated by diagonal affine iterated function systems. In Section \ref{S-8}, we will provide a more detailed justification of this fact and give an example.

The paper is organised as follows:  In Section \ref{S-2} we give the definition of projection entropy and present a result in \cite{FengHu2009}  on the dimension of self-similar measures. In Section \ref{S-3}, we give some upper bound estimates on conditional entropies and prove Theorem \ref{thm-1.0}.
In Section \ref{S-4}, we give an example to illustrate the application of Theorem \ref{thm-1.0}.  In Section 5,  we provide an algorithm to produce uniform lower bounds on $\dim(\mu_\beta)$ over small intervals of $\beta$,   and give a computer-assisted proof of Theorem \ref{thm-1.1}.  In Section \ref{S-6}, we investigate the asymptotic properties of
$\dim(\mu_\beta)$ when $\beta$ is close to   $2$.  In Section \ref{S-7}, we prove Theorem \ref{thm-1.4} and give computational results on $\dim(\mu_\beta)$ for some Pisot numbers of degree $3$ or $4$. In Section \ref{S-8}, we give some final remarks.

\section{Preliminary}
\label{S-2}
In this section, we introduce the concept of projection entropy for a Bernoulli product measure associated with an IFS and present  a result in \cite{FengHu2009} on the dimension of self-similar measures.

Let $\{S_i\}_{i=1}^\ell$ be an IFS on $\R^d$ with attractor $K$, and let $ (\Sigma,\sigma ) $ be the one-sided full shift over the alphabet $ \{1,\ldots, \ell\}$.  That is, $ \Sigma=\{1,\ldots, \ell\}^\N$ and $\sigma $ is the left shift on $ \Sigma $ defined by
\[ \sigma ((x_{n})_{n=1}^{\infty}) = (x_{n+1})_{n=1}^{\infty}. \]
Let $ \pi: \Sigma\to\eucild{d} $ be the canonical coding map associated with the IFS  $\{S_i\}_{i=1}^\ell$. That is,
  \[ \pi(x) = \lim_{n\to\infty} S_{x_{1}}\circ \cdots \circ S_{x_{n}}(0),\qquad x=(x_n)_{n=1}^\infty.\]
Let $ m=\prod_{n=1}^{\infty}\{p_{1},\ldots, p_{\ell}\} $ be the Bernoulli product measure on $ \Sigma $ and $\mu=m\circ \pi^{-1}$,  that is,
\[ \mu (A) = m(\pi^{-1}(A)) \]
for any Borel subset $ A $ of $ \eucild{d} $. Clearly, $\mu$ is supported on $K$. It is well-known  (\cite{Hutchinson1981}) that $\mu$ is the unique Borel probability measure on $\R^d$ such that
\begin{equation}
\label{e-S2.1}
\mu=\sum_{i=1}^\ell p_i\mu\circ S_i^{-1}.
\end{equation}

Let $ \calP =\lbrace[i]\colon i=1,\ldots,\ell\rbrace $ be the natural partition of $ \Sigma $, where
 \[ [i]:= \lbrace x = (x_{n})_{n=1}^{\infty}\in
\Sigma:\; x_{1}=i\rbrace. \]
The following definition was introduced in \cite{FengHu2009}.

\begin{definition}\label{def1.1}
	The projection entropy of $ m $ under $ \pi $ is
	\[ h_{\pi}(\sigma,m) := H_{m}(\calP) - H_{m}(\calP\vert \pi^{-1}\calB(\eucild{d}) ), \]
	where $ H_{m}(\calP) = \sum_{i=1}^{\ell}-p_{i}\log p_{i} $ is the entropy of the partition $ \calP $, $\calB (\eucild{d})$ is the Borel $\sigma$-algebra on $\R^d$, and $ H_{m}(\calP\vert\pi^{-1}\calB(\eucild{d})) $ is the conditional entropy of $ \calP $ given $ \pi^{-1}\calB (\eucild{d}) $.
\end{definition}

The reader is referred to \cite{Walters1982} for the definition of conditional entropy. The concept of projection entropy plays a crucial role in the dimension theory of IFS \cite{FengHu2009}. In particular,  it can be used to characterize the dimension of self-similar measures.

 \begin{theorem}[{\cite[Theorem~2.8]{FengHu2009}}]
 \label{thm-2.1}
	Let $ \{S_i\}_{i=1}^\ell$  be an IFS consisting of similarities.  Let $ \rho_{i}$ be the contraction ratio of $S_i$, $i=1,\ldots, \ell$. Then $ \mu $ is exact dimensional and
	\[ \dim (\mu)= \dfrac{h_{\pi}(\sigma,m)}{\lambda}=\dfrac{H_{m}(\calP) - H_{m}(\calP\vert\pi^{-1}\calB(\eucild{d}))}{\lambda}, \]
	where $ \lambda:=-\sum_{i=1}^{\ell}p_{i}\log\rho_{i} $.
\end{theorem}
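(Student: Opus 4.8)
The plan is to prove the stronger pointwise statement that, for $m$-a.e.\ $x\in\Sigma$, the limit $\lim_{r\to0}\frac{\log\mu(B_r(\pi x))}{\log r}$ exists and equals $h_\pi(\sigma,m)/\lambda$; since $\mu=m\circ\pi^{-1}$, this yields simultaneously that $\mu$ is exact dimensional and that $\dim(\mu)=h_\pi(\sigma,m)/\lambda$ (if one prefers, exact dimensionality may be taken from the introduction, and then only the value of the constant is at issue). First I would replace Euclidean radii by symbolic generations through a ``Lyapunov clock''. For a word $J=j_1\cdots j_n$ put $\rho_J=\rho_{j_1}\cdots\rho_{j_n}$ and $S_J=S_{j_1}\circ\cdots\circ S_{j_n}$, so that $\pi([J])=S_J(K)$ has diameter comparable to $\rho_J$; for $r>0$ let $\mathcal A_r$ be the stopping partition of $\Sigma$ into cylinders $[J]$ with $\rho_J\le r<\rho_{J'}$ ($J'$ being $J$ with its last letter deleted), and for $x\in\Sigma$ let $J_r(x)\in\mathcal A_r$ be the initial word of $x$ and $n_r(x):=|J_r(x)|$. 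Applying the strong law of large numbers to the i.i.d.\ sequence $(-\log\rho_{x_j})_{j\ge1}$, whose mean is $\lambda$, one gets $-\log\rho_{J_r(x)}=\lambda\,n_r(x)+o(n_r(x))$ and $n_r(x)=\lambda^{-1}\log(1/r)+o(\log(1/r))$ for $m$-a.e.\ $x$. It then suffices to prove that $-\tfrac{1}{n_r(x)}\log\mu(B_r(\pi x))\to h_\pi(\sigma,m)$ as $r\to0$ for $m$-a.e.\ $x$.

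The core of the argument is to compare the $\mu$-mass of a ball with conditional measures on $\Sigma$. Put $\mathcal I:=\pi^{-1}\calB(\R^d)$ and let $(m^{\mathcal I}_x)_x$ be the Rokhlin disintegration of $m$ over $\mathcal I$, so that $m^{\mathcal I}_x$ is carried by the fibre $\pi^{-1}(\pi x)$ and $m^{\mathcal I}_x([J])=\mathbb E_m[\mathbf 1_{[J]}\mid\mathcal I](x)$. Let $E_r(x)$, respectively $F_r(x)$, be the union of the cylinders $[J]$, $J\in\mathcal A_r$, with $S_J(K)\subseteq B_r(\pi x)$, respectively $S_J(K)\cap B_r(\pi x)\ne\emptyset$; both are unions of cylinders, both contain $[J_r(x)]$ (after normalizing $\mathrm{diam}(K)\le1$ if necessary), and $E_r(x)\subseteq\pi^{-1}(B_r(\pi x))\subseteq F_r(x)$, whence $m(E_r(x))\le\mu(B_r(\pi x))\le m(F_r(x))$. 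Using the bounded-overlap property of $\mathcal A_r$, the self-similarity $\mu=\sum_{J\in\mathcal A_r}p_J\,\mu\circ S_J^{-1}$, and a bounded-distortion estimate (needed because the $\rho_i$ need not be equal), I would show that $m(E_r(x))$, $\mu(B_r(\pi x))$ and $m(F_r(x))$ are comparable up to a fixed constant, and that, up to sub-exponential factors in $n:=n_r(x)$,
\[
\mu(B_r(\pi x))\ \asymp\ \dfrac{m([J_r(x)])}{m^{\mathcal I}_x([J_r(x)])}.
\]
The point is that $m^{\mathcal I}_x([J_r(x)])$ is a Borel function of $\pi x$, supported on $S_{J_r(x)}(K)\subseteq B_{Cr}(\pi x)$, with $\mu$-integral $m([J_r(x)])$, and that its value at $\pi x$ is comparable to its $\mu$-average over $B_r(\pi x)$ — a regularity that self-similarity delivers, since $S_{J_r(x)}^{-1}$ carries $B_r(\pi x)$ to a ball of $O(1)$ radius about $\pi(\sigma^{n}x)$. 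It then remains to find the exponential rate of $m([J_r(x)])/m^{\mathcal I}_x([J_r(x)])$.

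Two ergodic averages finish the computation. Since $m$ is Bernoulli, $-\tfrac1n\log m([J_r(x)])=-\tfrac1n\sum_{j=1}^{n}\log p_{x_j}\to\sum_{i=1}^{\ell}-p_i\log p_i=H_m(\calP)$ for $m$-a.e.\ $x$ by the law of large numbers. On the other hand $-\log m^{\mathcal I}_x([J_r(x)])=I_m(\calP_0^{n-1}\mid\mathcal I)(x)$, the conditional information of the generation-$n$ partition $\calP_0^{n-1}:=\bigvee_{j=0}^{n-1}\sigma^{-j}\calP$ given $\mathcal I$; applying the chain rule for conditional information, the identity $\calP\vee\mathcal I=\calP\vee\sigma^{-1}\mathcal I$ (immediate from $\pi(x)=S_{x_1}(\pi(\sigma x))$ and the bijectivity of $S_{x_1}$ on $\R^d$), and the independence, under the Bernoulli measure $m$, of the first coordinate from the $\sigma$-algebra generated by the remaining coordinates, yields the exact cocycle identity
\[
I_m(\calP_0^{n-1}\mid\mathcal I)(x)=\sum_{j=0}^{n-1}I_m(\calP\mid\mathcal I)(\sigma^{j}x).
\]
Since $I_m(\calP\mid\mathcal I)\in L^1(m)$ (its integral equals $\conditionE\le H_m(\calP)<\infty$) and $m$ is $\sigma$-ergodic, Birkhoff's ergodic theorem gives $-\tfrac1n\log m^{\mathcal I}_x([J_r(x)])\to\conditionE$ for $m$-a.e.\ $x$. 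Combining with the comparison of the previous paragraph, $-\tfrac{1}{n_r(x)}\log\mu(B_r(\pi x))\to H_m(\calP)-\conditionE=h_\pi(\sigma,m)$, which together with the Lyapunov-clock reduction completes the proof.

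The hard part will be the comparison $\mu(B_r(\pi x))\asymp m([J_r(x)])/m^{\mathcal I}_x([J_r(x)])$ up to sub-exponential factors; the two invocations of the ergodic theorem and the purely symbolic chain-rule computation are essentially routine. Making that comparison rigorous requires (i) a uniform bounded-distortion estimate to handle unequal contraction ratios, and (ii) a careful argument, powered by the self-similarity of $\mu$ via the rescaling by $S_{J_r(x)}^{-1}$ together with a martingale / Lebesgue-differentiation step, showing that the conditional probabilities $m^{\mathcal I}_{x'}([J_r(x)])$ for $x'$ with $\pi x'\in B_r(\pi x)$ vary by at most a sub-exponential factor in $n$. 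It is precisely this geometric-to-measure-theoretic matching — available because $\mu$ is self-similar — that breaks down for general self-affine measures, which is why the analogous formula in the self-affine setting is considerably harder.
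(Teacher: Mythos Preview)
The paper does not prove this theorem at all; it is quoted verbatim as \cite[Theorem~2.8]{FengHu2009} and used as a black box throughout. So there is no ``paper's own proof'' to compare against. Your outline is, in fact, a faithful sketch of the strategy in \cite{FengHu2009}: the Lyapunov clock $n_r(x)\sim\lambda^{-1}\log(1/r)$ via the law of large numbers, the identity $\calP\vee\pi^{-1}\calB(\R^d)=\calP\vee\sigma^{-1}\pi^{-1}\calB(\R^d)$ combined with Bernoulli independence to obtain the exact cocycle $I_m(\calP_0^{n-1}\mid\mathcal I)=\sum_{j=0}^{n-1}I_m(\calP\mid\mathcal I)\circ\sigma^j$, and then Birkhoff to get $-\tfrac1n\log m^{\mathcal I}_x([x|n])\to\conditionE$. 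That symbolic half of your argument is correct and is exactly how Feng--Hu proceed.

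Two cautions on the geometric half. First, the assertion that $m(E_r(x))$, $\mu(B_r(\pi x))$, $m(F_r(x))$ are ``comparable up to a fixed constant'' is too strong: in the overlapping case $\mu$ need not be doubling, so one only gets $\mu(B_r(\pi x))\le m(F_r(x))\le\mu(B_{Cr}(\pi x))$, which suffices for $\log$-asymptotics but is not a fixed-constant comparison. Second, and more seriously, the heuristic that $m^{\mathcal I}_x([J_r(x)])$ is ``comparable to its $\mu$-average over $B_r(\pi x)$'' does not hold pointwise: different cylinders $[J']$ meeting the fibre $\pi^{-1}(\pi x)$ can carry wildly different conditional masses $m^{\mathcal I}_x([J'])$, so the Lebesgue-differentiation picture you invoke is not uniform in $J$. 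In \cite{FengHu2009} this step is handled not by a pointwise regularity statement but by combining the a.e.\ convergence of $-\tfrac1n\log m^{\mathcal I}_x([x|n])$ with a Besicovitch/density-point argument applied to the \emph{projected} measure $\mu$, together with the self-similarity relation to transfer estimates from scale $r$ back to unit scale; the upshot is that the ``bad'' set of $x$ where $\mu(B_r(\pi x))$ deviates from $m([J_r(x)])/m^{\mathcal I}_x([J_r(x)])$ by more than a subexponential factor has small $m$-measure, summably in $n$. You have correctly flagged this as the hard part, but the sketch you give for it would need to be replaced by that more careful measure-theoretic argument.
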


Since $ H_{m}(\calP) $ and $ \lambda $ are easy to compute, in order to estimate $ \dim(\mu)$, it is sufficient to estimate $ H_{m}(\calP\vert\pi^{-1}\calB(\eucild{d}))  $.

In the next section, we provide an algorithm to estimate $ H_{m}(\calP\vert\pi^{-1}\calB(\eucild{d})) $ from above for any given IFS on $\R^d$.

\section{Upper bounds  on $H_{m}(\calP\vert\pi^{-1}\calB(\eucild{d})) $ and the proof of Theorem \ref{thm-1.0}}
\label{S-3}

Let $ \{S_i\}_{i=1}^\ell$ be an IFS on $ \eucild{d} $ with attractor $ K $. Here we only assume that $ S_{i} $ are contracting. Let $\pi$, $m$, $\mu$ and $\mathcal P$ be defined as in Section \ref{S-2}.
Recall that for a finite Borel partition $\mathcal A$ of $\Sigma$,
\begin{equation}
\label{e-H3.1}
H_m(\mathcal P \vert \mathcal A):=H_m(\mathcal P\vee \mathcal A)-H_m(\mathcal  A),
\end{equation}
where $\mathcal P\vee \mathcal A :=\{P\cap A\colon P\in \mathcal P,\; A\in \mathcal A\}$; see e.g. \cite[\S4.3]{Walters1982}.

In this section we will provide upper bounds on the condition entropy $H_{m}(\calP\vert\pi^{-1}\calB(\eucild{d}))$, and prove Theorem \ref{thm-1.0}.  At the end of this section, we also give an iterative algorithm to estimate $\mu(A)$ from above with small error for a given bound Borel set $A\subset \R^d$.

The following result is our starting point.

\begin{lem}\label{prop2.1}
	\begin{itemize}
		\item[(i)]  Let $ \huaD $ be a finite Borel partition of $\eucild{d}$. Then
		\[ \conditionE\leq H_{m}(\calP\vert\pi^{-1} \huaD). \]

		\item[(ii)] Let $ \lbrace\huaD_{n}\rbrace $ be a sequence of finite Borel partitions of $K$ with
		\[\diam(\huaD_{n}):= \sup_{D\in\huaD_{n}}\diam(D) \to 0.  \]
Then
$\conditionE= \nliminf \conditionD{n}$.
	\end{itemize}
\end{lem}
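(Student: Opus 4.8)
The plan is to analyze how the conditional entropy $H_m(\calP \mid \pi^{-1}\calB(\eucild{d}))$ relates to the conditional entropies $H_m(\calP \mid \pi^{-1}\huaD)$ as $\huaD$ ranges over finite Borel partitions. The key structural fact I would invoke is the standard martingale-type result for conditional entropy: if $(\huaA_n)$ is an increasing sequence of finite sub-$\sigma$-algebras of a measure space whose union generates a $\sigma$-algebra $\huaA_\infty$, then $H_m(\calP \mid \huaA_n) \downarrow H_m(\calP \mid \huaA_\infty)$ (see \cite[Theorem 4.?]{Walters1982} or the increasing-martingale convergence for conditional expectations applied to the information function). More generally, for \emph{any} finite sub-$\sigma$-algebra $\huaA$ contained in a $\sigma$-algebra $\huaA'$, one has $H_m(\calP \mid \huaA') \le H_m(\calP \mid \huaA)$ by the "conditioning reduces entropy" monotonicity. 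Since $\pi^{-1}\huaD \subseteq \pi^{-1}\calB(\eucild{d})$ as $\sigma$-algebras (every element of $\huaD$ is Borel), part (i) should follow immediately from this monotonicity: $H_m(\calP \mid \pi^{-1}\calB(\eucild{d})) \le H_m(\calP \mid \pi^{-1}\huaD)$.

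For part (ii), the plan is to show that the $\sigma$-algebras $\pi^{-1}\huaD_n$, suitably combined, increase to something equivalent (mod $m$) to $\pi^{-1}\calB(\eucild{d})$. More precisely, I would first replace the sequence $(\huaD_n)$ by the increasing sequence of refinements $\huaE_n := \huaD_1 \vee \cdots \vee \huaD_n$, noting that $\diam(\huaE_n) \to 0$ still holds and that $H_m(\calP \mid \pi^{-1}\huaE_n)$ is nonincreasing and squeezed between $H_m(\calP\mid\pi^{-1}\calB(\eucild{d}))$ and $H_m(\calP\mid \pi^{-1}\huaD_n)$; hence it suffices to prove $\lim_n H_m(\calP \mid \pi^{-1}\huaE_n) = H_m(\calP \mid \pi^{-1}\calB(\eucild{d}))$. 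By the increasing-martingale theorem quoted above, this limit equals $H_m(\calP \mid \pi^{-1}\huaE_\infty)$ where $\huaE_\infty = \bigvee_n \pi^{-1}\huaE_n = \pi^{-1}\left(\bigvee_n \huaE_n\right)$. So the crux is: the $\sigma$-algebra $\bigvee_n \huaE_n$ generated by partitions of $K$ with $\diam(\huaE_n)\to 0$ coincides with the Borel $\sigma$-algebra $\calB(K)$ up to $\mu$-null sets. This is a routine separability fact — since the mesh goes to zero, the atoms of $\huaE_n$ form a neighborhood basis at $\mu$-a.e. point, so $\bigvee_n \huaE_n$ separates points of $K$ and contains all closed balls mod $\mu$, hence equals $\calB(K)$ mod $\mu$. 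Replacing $\calB(K)$ by $\calB(\eucild{d})$ changes nothing since $\mu$ is supported on $K$.

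The main obstacle I anticipate is the careful bookkeeping in the limit statement: one must make sure the increasing-martingale convergence theorem for conditional entropy is applicable (it requires $H_m(\calP)<\infty$, which holds since $\calP$ is finite), and one must handle the passage between the partitions $\huaD_n$ of $K$ and partitions of $\eucild{d}$, and between $\sigma$-algebras on $\eucild{d}$ and their $\pi$-pullbacks to $\Sigma$. The identity $\pi^{-1}\left(\bigvee_n \huaE_n\right) = \bigvee_n \pi^{-1}\huaE_n$ is elementary but should be stated, and the fact that $\pi^{-1}$ applied to a $\mu$-null set is an $m$-null set (immediate from $\mu = m\circ\pi^{-1}$) is what lets us pass the "mod null sets" statements through $\pi^{-1}$. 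Beyond this, everything reduces to the two standard properties of conditional entropy — monotonicity under refinement of the conditioning $\sigma$-algebra, and continuity along increasing sequences — so the proof should be short.
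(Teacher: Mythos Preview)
Your proof of part (i) is correct and matches the paper's argument.

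For part (ii), however, there is a genuine gap in the squeeze step. With $\huaE_n := \huaD_1 \vee \cdots \vee \huaD_n$, the partition $\huaE_n$ is a \emph{refinement} of $\huaD_n$, so monotonicity gives
\[
H_m(\calP \mid \pi^{-1}\calB(\eucild{d})) \;\le\; H_m(\calP \mid \pi^{-1}\huaE_n) \;\le\; H_m(\calP \mid \pi^{-1}\huaD_n).
\]
Proving that the middle term converges to the left-hand side therefore tells you nothing about the right-hand side: $H_m(\calP \mid \pi^{-1}\huaD_n)$ sits \emph{above} the quantity you control, not below it. Your argument thus recovers only the inequality already given by part (i), namely $\liminf_n H_m(\calP \mid \pi^{-1}\huaD_n) \ge H_m(\calP \mid \pi^{-1}\calB(\eucild{d}))$, and does not yield the required $\limsup$ bound.

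The paper repairs this by working in the opposite direction. Fix $\varepsilon>0$, take an auxiliary increasing sequence $\{\huaA_n\}$ of finite partitions of $K$ with $\huaA_n \uparrow \calB(K)$, and choose $k$ with $H_m(\calP \mid \pi^{-1}\huaA_k) \le H_m(\calP \mid \pi^{-1}\calB(\eucild{d})) + \varepsilon$. Then, using a standard approximation lemma (Bowen \cite[Lemma~1.23]{Bowen1975}), one builds partitions $\huaE_n$ that are \emph{coarsenings} of $\huaD_n$ (each $E_i^n$ is a union of cells of $\huaD_n$) and satisfy $\mu(E_i^n \triangle A_i) \to 0$ for each atom $A_i$ of $\huaA_k$. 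Now the inequality goes the right way: $H_m(\calP \mid \pi^{-1}\huaD_n) \le H_m(\calP \mid \pi^{-1}\huaE_n) \to H_m(\calP \mid \pi^{-1}\huaA_k)$, which gives the $\limsup$ bound. The essential point you are missing is that to bound $H_m(\calP \mid \pi^{-1}\huaD_n)$ from above you must compare with a \emph{coarser} conditioning $\sigma$-algebra, not a finer one.
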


\begin{proof}
	Let $ \huaD $ be a finite Borel partition of $ \eucild{d} $. Then the \sigalgebra  generated by $ \pi^{-1}\huaD $ is a \subsigalgebra of $ \pi^{-1} \calB(\eucild{d}) $. By \cite[Theorem 4.3(v)]{Walters1982},
	\[ \conditionE\leq\conditionD{}. \]
	 This proves (i).
	
	To prove (ii), let $ \lbrace\huaD_{n}\rbrace $ be a sequence of finite Borel partitions of $ K $ with ${\rm diam}(\huaD_{n})\to 0 $ as $ n\to\infty $. Let $ \varepsilon>0 $. In view of (i), it suffices to show that
	\begin{equation}
\label{e-2.1}
 \nlimsup \: \conditionD{n}\leq\conditionE +\varepsilon.
\end{equation}

To this end, take a sequence $ \lbrace\huaA_{n}\rbrace$ of finite Borel partitions of $K$ such that
	\[ \huaA_{n+1}\geq\huaA_{n},\quad {\rm diam}(\huaA_{n})\to 0, \quad \mbox{and}\quad \huaA_{n}\uparrow \calB(K),\]
	where $ \huaA\geq\huaC $ means that any element in $ \huaC $ is the union of elements in $ \huaA $, and $\calB(K)$ stands for the  \sigalgebra of Borel subsets of $K$.
		Since the range of $\pi$ is $K$,  it follows that $$\sigma(\pi^{-1}\huaA_{n})\uparrow\pi^{-1}\calB(K)=\pi^{-1}\calB(\eucild{d}).$$
Hence by  \cite[Theorem 4.7]{Walters1982},
	$H_{m}(\calP\vert\pi^{-1}\huaA_{n})\downarrow \conditionE.$
	Take a large positive  integer $k$ such that
	\[ \conditionA{k}\leq\conditionE+\varepsilon.\]
	Write $ \huaA_k= \lbrace A_{1},\ldots,A_{M}\rbrace $.
	
	Since ${\rm diam}(\huaD_{n})\to 0 $ as $ n\to\infty $, by \cite[Lemma 1.23]{Bowen1975} there are partitions $ \huaE_{n}=\lbrace E_{1}^{n},  \ldots, E_{M}^{n}\rbrace $ of $K$ such that
	\begin{itemize}
				\item[(a)] Each $ E_{i}^{n} $ is a union of members of $ \huaD_{n} $;
		\item[(b)] $ \nliminf\mu(E_{i}^{n}\bigtriangleup A_{i}) =0  $ for each $1\leq  i\leq M $, where $A\bigtriangleup B:=(A\backslash B)\cup (B\backslash A)$. 	\end{itemize}

	Notice that for any $ 1\leq j\leq \ell $ and $ 1\leq i \leq M $,
	\begin{equation}\nonumber
	\begin{split}   m(([j]\cap\pi^{-1}(E_{i}^{n}))\bigtriangleup([j]\cap\pi^{-1}(A_{i})))  &\leq m(\pi^{-1}(E_{i}^{n})\bigtriangleup\pi^{-1}(A_{i}))
	\\ &=\mu(E_{i}^{n}\bigtriangleup A_{i})\to 0,\:\quad  \text{as} \: n\to\infty.
	\end{split}
	\end{equation}
    It follows that
	\[ \lim_{n\to \infty}\conditionhuaE{n}=\conditionA{k}\leq \conditionE + \varepsilon.\]
	Since $ \huaD_{n}\geq\huaE_{n} $, by \cite[Theorem 4.3(v)]{Walters1982} we have $\conditionD{n}\leq\conditionhuaE{n}$, therefore
		\[ \nlimsup \: \conditionD{n}\leq \lim_{n\to \infty}\conditionhuaE{n}\leq \conditionE +\varepsilon. \]
This proves \eqref{e-2.1} .
\end{proof}
\begin{remark}
\label{rem-3.2}
Let $ \huaD $ be a finite Borel partition of  $\R^d$ with respect to $\mu$ (cf. Section~\ref{S-1}). Since $\mu$ is supported on $K$, there exists a finite Borel partition $\huaD'$ of $K$ such that
for each $D\in \huaD$ there exists $D'\in \huaD'$ so that $\mu(D\bigtriangleup D')=0$. This implies that  $H_m(\mathcal P|\pi^{-1}\huaD)= H_m(\mathcal P|\pi^{-1}\huaD')$. It follows from  Lemma \ref{prop2.1}(i) that  $ \conditionD{} $ is an upper bound of $ \conditionE $.
\end{remark}
Below we discuss  how to estimate $ \conditionD{} $ for a given finite Borel partition $\huaD$ of $\R^d$ with respect to $\mu$.

\begin{lem}\label{lmm2.2}
	For  $D\in {\mathcal B}(\eucild{d})$ and $i\in\lbrace1,\ldots,\ell \rbrace $,
	\[ m([i]\cap\pi^{-1}(D)) =p_{i} \mu(S_{i}^{-1}(D)). \]
\end{lem}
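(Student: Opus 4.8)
The plan is to unravel the definitions of the pushforward measure $\mu = m\circ \pi^{-1}$ and the cylinder set $[i]$, and then exploit the self-similar structure encoded by the coding map $\pi$. First I would recall that $\pi\colon \Sigma\to \R^d$ satisfies the shift relation $\pi(x) = S_{x_1}(\pi(\sigma x))$ for every $x = (x_n)_{n=1}^\infty \in \Sigma$; this is immediate from the definition $\pi(x) = \lim_{n\to\infty} S_{x_1}\circ\cdots\circ S_{x_n}(0)$. Consequently, for a fixed $i\in\{1,\ldots,\ell\}$ and $x\in [i]$, we have $\pi(x) = S_i(\pi(\sigma x))$, so $\pi(x)\in D$ if and only if $\pi(\sigma x)\in S_i^{-1}(D)$. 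In other words,
\[
[i]\cap \pi^{-1}(D) = [i]\cap \sigma^{-1}\big(\pi^{-1}(S_i^{-1}(D))\big),
\]
at least on the level of the coding; one should be slightly careful here since $\sigma$ restricted to $[i]$ is a bijection onto $\Sigma$, and under this identification the cylinder $[i]$ corresponds to the whole space with the conditional measure.

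Next I would compute the $m$-measure of this set. Since $m = \prod_{n=1}^\infty (p_1,\ldots,p_\ell)$ is a Bernoulli product measure, it has the product structure $m([i]\cap \sigma^{-1}(E)) = p_i\, m(E)$ for any Borel set $E\subset \Sigma$: conditioning on the first symbol being $i$ (which has probability $p_i$) leaves the remaining coordinates distributed according to the same product measure, and $\sigma^{-1}(E)$ only constrains coordinates $2,3,\ldots$. Applying this with $E = \pi^{-1}(S_i^{-1}(D))$ gives
\[
m([i]\cap\pi^{-1}(D)) = p_i\, m\big(\pi^{-1}(S_i^{-1}(D))\big) = p_i\, \mu\big(S_i^{-1}(D)\big),
\]
where the last equality is just the definition $\mu = m\circ\pi^{-1}$. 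This completes the argument.

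I do not anticipate a genuine obstacle here — the statement is essentially a bookkeeping identity expressing the self-similarity relation \eqref{e-S2.1} at the symbolic level. The one point requiring a little care is the set-theoretic identity $[i]\cap\pi^{-1}(D) = [i]\cap\sigma^{-1}(\pi^{-1}(S_i^{-1}(D)))$: one must verify both inclusions using $\pi|_{[i]} = S_i\circ\pi\circ\sigma|_{[i]}$ and the injectivity of $\sigma$ on $[i]$, rather than just citing the relation loosely. Alternatively, and perhaps more cleanly, one can avoid manipulating $\pi$ directly by invoking the identity \eqref{e-S2.1}: starting from $\mu(A) = \sum_{j=1}^\ell p_j\mu(S_j^{-1}A)$ one recognizes the $j$-th summand as the contribution of $\pi^{-1}(A)\cap[j]$, but making this rigorous still amounts to the same symbolic computation, so I would present the direct argument above.
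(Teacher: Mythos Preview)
Your proposal is correct and follows essentially the same approach as the paper's proof: you establish the set identity $[i]\cap\pi^{-1}(D)=[i]\cap\sigma^{-1}\pi^{-1}(S_i^{-1}D)$ via the shift relation $\pi(x)=S_{x_1}(\pi(\sigma x))$, then apply the Bernoulli product property $m([i]\cap\sigma^{-1}E)=p_i\,m(E)$ and the definition $\mu=m\circ\pi^{-1}$. Your extra caveats about injectivity of $\sigma|_{[i]}$ are unnecessary (the set identity is a purely pointwise equivalence), but they do no harm.
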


 \begin{proof}
    Observe that for $x=(x_{n})_{n=1}^{\infty}\in \Sigma$,
	\begin{align*}
		x\in [i]\cap\pi^{-1}(D)
	& \iff x_{1} = i,\; \pi x\in D\\
	& \iff x_{1} = i,\;  S_{i}(\pi\sigma x)\in D\\
	& \iff x_{1} = i,\; x \in \sigma^{-1}\pi^{-1}(S_{i}^{-1}D)\\
	& \iff x \in [i] \cap \sigma^{-1}\pi^{-1}(S_{i}^{-1}D).
	\end{align*}
Hence $ [i]\cap\pi^{-1}(D)=[i]\cap\sigma^{-1}\pi^{-1}(S_{i}^{-1}D) $.
It follows that
	\begin{align*}
		m([i]\cap\pi^{-1}(D))&=m([i]\cap\sigma^{-1}\pi^{-1}(S_{i}^{-1}D))\\
	&= p_{i}m(\pi^{-1}(S_{i}^{-1}D))\\
	&=p_{i}\mu(S_{i}^{-1}D),
		\end{align*}
	where in the second equality we used the property that $ m([i]\cap\sigma^{-1}A)=p_{i}m(A) $ for any Borel subset  $ A$ of $\Sigma $.
\end{proof}

Let $f\colon\R_+^\ell\to \R$ be defined as in \eqref{e-f1.1}.

\begin{lem}\label{lmm2.3} For every $1\leq i\leq \ell$,
	$\frac{\partial f}{\partial x_{i}}\geq 0 $.
	Consequently, $f$ is monotone increasing over $\R_+^\ell$ in the sense that
	\[ f(x_{1}+\varepsilon_{1},\ldots,x_{\ell}+\varepsilon_{\ell})\geq  f(x_{1},\ldots,x_{\ell}) \]
	for any $ x_{1},\ldots,x_{\ell},\varepsilon_{1},\ldots,\varepsilon_{\ell} \geq 0$.
\end{lem}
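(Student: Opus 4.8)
The plan is to prove the partial derivative inequality directly from the definition of $f$, and then deduce monotonicity by an integration argument. Recall that
\[ f(x_1,\ldots,x_\ell) = s\sum_{j=1}^\ell \varphi\!\left(\frac{x_j}{s}\right), \qquad s:=x_1+\cdots+x_\ell, \]
where $\varphi(t)=-t\log t$. Expanding $\varphi(x_j/s) = -(x_j/s)\log(x_j/s) = -(x_j/s)(\log x_j - \log s)$, one gets the cleaner expression
\[ f(x_1,\ldots,x_\ell) = -\sum_{j=1}^\ell x_j\log x_j + \Big(\sum_{j=1}^\ell x_j\Big)\log\Big(\sum_{j=1}^\ell x_j\Big) = \sum_{j=1}^\ell \varphi(x_j) - \varphi(s). \]
This rewriting is the key simplification: it turns $f$ into a difference of single-variable $\varphi$-terms, so the partial derivatives become immediate.

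Differentiating with respect to $x_i$ (for $x_i>0$, using $\varphi'(t) = -\log t - 1$ and $\partial s/\partial x_i = 1$) yields
\[ \frac{\partial f}{\partial x_i} = \varphi'(x_i) - \varphi'(s) = (-\log x_i - 1) - (-\log s - 1) = \log\frac{s}{x_i} = \log\frac{x_1+\cdots+x_\ell}{x_i}. \]
Since $x_i \leq s$, the ratio $s/x_i \geq 1$, hence $\partial f/\partial x_i \geq 0$. One should note the boundary/degenerate cases: if some $x_j=0$ the term $\varphi(x_j)=0$ by the convention $0\log 0 = 0$, and $f$ extends continuously to all of $\R_+^\ell$; the only genuinely singular point is the origin $s=0$, where $f=0$ by continuity. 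Away from coordinate hyperplanes and the origin, the formula above is valid.

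Finally, to get the stated monotonicity $f(x_1+\varepsilon_1,\ldots,x_\ell+\varepsilon_\ell)\geq f(x_1,\ldots,x_\ell)$ for all $x_j,\varepsilon_j\geq 0$, I would move from the starting point to the endpoint one coordinate at a time: for the $i$-th step write $g(t) = f(\ldots, x_{i-1}+\varepsilon_{i-1}, t, x_{i+1},\ldots)$ for $t$ ranging over $[x_i, x_i+\varepsilon_i]$, and observe that $g$ is continuous on this interval and differentiable on its interior with $g'(t) = \log\big((\text{sum})/t\big)\geq 0$; hence $g$ is nondecreasing, so $g(x_i+\varepsilon_i)\geq g(x_i)$. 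Chaining the $\ell$ steps gives the result. The only mild care needed is the continuity at points where a coordinate sum vanishes or a coordinate hits $0$; since $f$ is continuous on all of $\R_+^\ell$ this causes no trouble. I do not anticipate a substantive obstacle here — the main content is simply spotting the identity $f = \sum_j\varphi(x_j) - \varphi(\sum_j x_j)$, after which everything is routine calculus.
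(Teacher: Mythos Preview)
Your proof is correct and follows essentially the same approach as the paper: both compute $\partial f/\partial x_i = \log(x_1+\cdots+x_\ell) - \log x_i \geq 0$ and deduce monotonicity from this. You supply more detail (the rewriting $f=\sum_j\varphi(x_j)-\varphi(s)$, the boundary discussion, and the explicit coordinate-by-coordinate argument), whereas the paper simply states the derivative formula and asserts the conclusion.
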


\begin{proof}
   A direct computation shows that  for each $i$,
	\[  \frac{\partial f}{\partial x_{i}} (x_1,\ldots, x_\ell)= \log(x_{1}+\cdots+x_{\ell})-\log x_{i} \geq 0,\]
from which we obtain the desired inequality for $f$.
\end{proof}

\begin{lem}\label{lem-3.5} Let $\huaD$ be a finite partition of $\Bbb R^d$ with respect to $\mu$. Then
	\[ \conditionD{}=\sum_{D\in\huaD}f(\mu_{1}(D),\ldots,\mu_{\ell}(D)), \]
	where $ \mu_{i}(D):=p_{i}\mu(S_{i}^{-1}D) $.
\end{lem}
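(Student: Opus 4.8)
The plan is to unwind the definition of the conditional entropy $H_m(\mathcal P\mid \pi^{-1}\mathcal D)$ using the identity \eqref{e-H3.1}, and then identify the resulting expression with $\sum_{D\in\huaD}f(\mu_1(D),\ldots,\mu_\ell(D))$ by a direct computation with the function $\varphi(x)=-x\log x$. First I would reduce to the case where $\huaD=\{D_1,\ldots,D_r\}$ is a genuine finite Borel partition of $\R^d$ (not just modulo $\mu$-null sets); by Remark \ref{rem-3.2}, replacing $\huaD$ by an honest partition of $K$ changes neither side of the claimed equality, so this reduction is harmless. Then $\pi^{-1}\huaD=\{\pi^{-1}D_1,\ldots,\pi^{-1}D_r\}$ is a finite Borel partition of $\Sigma$, and $\mathcal P\vee\pi^{-1}\huaD$ consists of the sets $[i]\cap\pi^{-1}(D)$ for $1\le i\le\ell$, $D\in\huaD$.

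The core computation is then
\[
H_m(\mathcal P\mid\pi^{-1}\huaD)=H_m(\mathcal P\vee\pi^{-1}\huaD)-H_m(\pi^{-1}\huaD)
=\sum_{D\in\huaD}\sum_{i=1}^\ell\varphi\bigl(m([i]\cap\pi^{-1}D)\bigr)-\sum_{D\in\huaD}\varphi\bigl(m(\pi^{-1}D)\bigr).
\]
By Lemma \ref{lmm2.2} we have $m([i]\cap\pi^{-1}D)=p_i\mu(S_i^{-1}D)=\mu_i(D)$, and summing over $i$ gives $m(\pi^{-1}D)=\mu(D)=\sum_{i=1}^\ell\mu_i(D)$. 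So the summand indexed by $D$ is
\[
\sum_{i=1}^\ell\varphi(\mu_i(D))-\varphi\Bigl(\sum_{i=1}^\ell\mu_i(D)\Bigr).
\]
It remains to check the elementary algebraic identity that, for $x_1,\ldots,x_\ell\ge0$ with $s:=x_1+\cdots+x_\ell>0$,
\[
\sum_{i=1}^\ell\varphi(x_i)-\varphi(s)=s\sum_{i=1}^\ell\varphi\Bigl(\frac{x_i}{s}\Bigr)=f(x_1,\ldots,x_\ell),
\]
which follows by expanding $\varphi(x_i)=-x_i\log x_i=-x_i\log s - x_i\log(x_i/s)$ and summing, the $-x_i\log s$ terms collecting to $-s\log s=\varphi(s)$. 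The degenerate case $s=0$ (i.e.\ $\mu(D)=0$) contributes $0$ to both sides with the usual convention $\varphi(0)=0$, so it is consistent; in the reduced setting where $\huaD$ partitions $K$ one may simply discard such $D$.

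There is no real obstacle here — the statement is essentially a bookkeeping lemma. The only point requiring a little care is the reduction to an honest finite partition so that the standard entropy formulas of \cite[\S4.3]{Walters1982} apply verbatim to $\pi^{-1}\huaD$ as a partition of $\Sigma$; this is exactly what Remark \ref{rem-3.2} provides, since both $H_m(\mathcal P\mid\pi^{-1}\huaD)$ and the quantities $\mu_i(D)$ depend on $\huaD$ only through its $\mu$-equivalence class. Once that is in place, the proof is the two-line computation above together with the elementary logarithm identity, which is precisely the content of the function $f$ defined in \eqref{e-f1.1}.
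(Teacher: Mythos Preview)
Your proposal is correct and follows essentially the same route as the paper: expand $H_m(\mathcal P\mid\pi^{-1}\huaD)$ via \eqref{e-H3.1}, invoke Lemma~\ref{lmm2.2} for $m([i]\cap\pi^{-1}D)$, use the self-similarity relation to get $\mu(D)=\sum_i\mu_i(D)$, and recognise the result as $f$. You are slightly more explicit than the paper about the reduction to a genuine partition (via Remark~\ref{rem-3.2}) and about the logarithm identity underlying the last step, but the argument is the same.
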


\begin{proof} By \eqref{e-H3.1},
	\begin{equation}\nonumber
	\begin{split}
	\conditionD{}&=H_{m}(\calP\vee \pi^{-1}\huaD)-H_{m}(\pi^{-1}\huaD)\\
	&= \sum_{D\in\huaD}\left( \left(\sum_{i=1}^{\ell}\varphi(m([i]\cap\pi^{-1}D))\right)-\varphi(m(\pi^{-1}D))\right)
	\\
	&= \sum_{D\in\huaD}\left(\left(\sum_{i=1}^{\ell}\varphi(p_{i}\mu(S_{i}^{-1}D))\right)-\varphi(\mu(D))\right)\\
	& =\sum_{D\in\huaD}f(\mu_{1}(D),\ldots,\mu_{\ell}(D)),
	\end{split}
	\end{equation}
	where we used Lemma \ref{lmm2.2} in the third equality, and the fact that $\mu(D)=\sum_{i=1}^\ell\mu_i(D)$ (which follows from \eqref{e-S2.1}) in the fourth equality.
\end{proof}

\begin{corollary}\label{corol2.5} Let $\huaD$ be a finite partition of $\Bbb R^d$ with respect to $\mu$.
	Suppose that $$ p_{i}\mu(S_{i}^{-1}D)\leq y_{i}(D)\quad \mbox{ for every } i\in\lbrace1,\ldots,\ell\rbrace \mbox{ and } D\in\huaD. $$
	Then
	$\conditionD{}\leq\sum_{D\in\huaD}f(y_{1}(D),\ldots,y_{\ell}(D))$.
\end{corollary}

\begin{proof}
	This follows immediately by combining Remark \ref{rem-3.2}, Lemmas \ref{lmm2.3} and \ref{lem-3.5}.
\end{proof}

Now we are ready to prove Theorem \ref{thm-1.0}.

\begin{proof}[Theorem \ref{thm-1.0}]
Part (i) follows by combining Theorem \ref{thm-2.1}, Remark \ref{rem-3.2} and Lemma 3.5. Part (ii) follows by combining Theorem \ref{thm-2.1}, Lemma
\ref{prop2.1},  Remark \ref{rem-3.2} and Lemma 3.5.
\end{proof}

Lemma \ref{prop2.1} (also Remark \ref{rem-3.2}) and Corollary \ref{corol2.5} provide us the following theoretical way to estimate $ \conditionE $ from above: first choose a finite Borel partition $ \huaD $ of $\R^d$ with respect to $\mu$, and find $ y_{i}(D)\geq p_i\mu(S_i^{-1}(D))$ for each $i\in \{1,\ldots, \ell\}$ and $D\in \huaD$.
Then
\begin{equation}
\label{e-U}
\conditionE \leq  \sum_{D\in\huaD}f(y_{1}(D),\ldots,y_{\ell}(D)).
\end{equation}
In the remaining part of this section, we discuss how to find $y_{i}(D)\geq p_i\mu(S_i^{-1}(D))$ with $ y_{i}(D)- p_i\mu(S_i^{-1}(D))$ being small. This reduces to the following.

\begin{problem}
How to estimate $ \mu(A) $ from above with small error for a given bounded Borel set $ A\subset\eucild{d}$?
\end{problem}

In what follows,  we give an answer to the above problem  in the special case when $p_1=\ldots=p_\ell=1/\ell$, by providing a simple iterative algorithm.

\begin{algorithm}
\label{alg-3.8}
	Let $A\subset \R^d$ be a given bounded Borel set. Take $ \gamma >0  $ and a positive integer $ L $.
	Choose a closed ball $ B\subset \R^d$ such that $ S_{i}(B)\subset B$, $i=1,\ldots,\ell$. Then $ K\subset B$. Below we inductively construct a finite sequence $(u_n)_{n=0}^{L^*}$ of non-negative rational numbers,  and a finite sequence $(\Lambda_n)_{n=0}^{L^*}$ of finite ``subsets'' of $\R_+\times \mathcal B(\R^d)$  for some $L^*\leq L$, and a non-negative rational number $u^*$ (which is our upper bound for $\mu(A)$.  Here for convenience,  we allow that the elements in $\Lambda_n$ may be the same, but they are counted as different elements.
	\begin{itemize}
		\item[(1)] Set $ u_{0}=0 $ and $ \Lambda_{0}=\left \lbrace (1, A)\right \rbrace$.
		
\item[(2)] Suppose we have obtained $u_n$ and  $\Lambda_{n} $ for some $n<L$.  If $\Lambda_n=\emptyset$ then set $u^*=u_n$, $L^*=n$ and  complete the algorithm. Otherwise, we set $u_{n+1}'=u_n$ and
\begin{equation}
\label{e-3.4e} \Lambda_{n+1}^*=\bigcup_{(t,E)\in \Lambda_n} \left\{ \left( t/\ell,\overline{V}_\gamma(S_{i}^{-1}E)\cap B\right):\; i=1,\ldots, \ell \right\},
\end{equation}
where  $ \overline{V}_{\gamma}(E) := \lbrace x\colon d(x,E)\leq \gamma \rbrace $ for $E\neq\emptyset$, and $\overline{V}_\gamma(\emptyset)=\emptyset$. Write $$\Lambda_{n+1}^*=\{(t_j, E_j):\; j=1,\ldots, k_{n+1}\}.$$ Keep in mind that we don't require  $(t_j, E_j)$ to be distinct for different $j$. For $j=1,\ldots, k_{n+1}$, we apply the following operations consecutively:
\begin{itemize}
\item[(a)] If $E_j=\emptyset$, then remove the element $(t_j, E_j)$ from $\Lambda_{n+1}^*$.

\item[(b)] If  $E_j= B$, then remove the element $(t_j, E_j)$ from $\Lambda_{n+1}^*$, and add $t_j$ to $u_{n+1}'$.
\end{itemize}
Finally set  $u_{n+1}=u_{n+1}'$ and $\Lambda_{n+1}=\Lambda_{n+1}^*$.

\item[(3)]  Repeating the above procedures until we obtain $u_L$ and $\Lambda_{L} $.
	 Finally we let  $L^*=L$ and
		\begin{equation} \label{e-3.5e}
u^{\ast}=u_{L}+\sum_{(t,E)\in\Lambda_{L}}t, \end{equation}
		    	then we complete the algorithm.
 \end{itemize}
 The rational number $u^{\ast} $ obtained above is our upper bound for $ \mu(A) $.
\end{algorithm}	

\begin{remark}\label{rmk2.6}
\begin{itemize}
\item[(i)]
The number $L$ in the above algorithm is called the iteration time.
\item[(ii)]  For each element $(t,E)$ of $\Lambda_n$, $t$ is of the form $p2^{-n}$ and can be saved as $(n,p)$, where $p$ is a positive integer $\leq 2^n$.  The addition operations involved in the calculations of $u_n$ and $u^*$ can be conducted by performing certain integer arithmetic operations, and bring no rounding errors.
\item[(iii)] The introduction of $\gamma$ in the above algorithm is used to compensate the rounding errors in the computation of $S_i^{-1}(E)$, and it should be selected according to the computation precision.
\end{itemize}
\end{remark}

In the general case when $(p_1,\ldots, p_\ell)$ is an arbitrary probability vector, to estimate $\mu(A)$ we need to modify Algorithm 3.8 accordingly. The main change is to replace the definition of $\Lambda_{n+1}^*$ in \eqref{e-3.4e} by
$$
\Lambda_{n+1}^*=\bigcup_{(t,E)\in \Lambda_n} \left\{ \left(tp_i(1+\gamma),\overline{V}_\gamma(S_{i}^{-1}E)\cap B\right):\; i=1,\ldots, \ell \right\},
$$
In this case, the calculations on both components of $(t,E)$ in $\Lambda_n$  are performed by  floating-point arithmetics. After obtaining $u^*$ from \eqref{e-3.5e}, one still needs to add a suitable error term $\Delta u^*$  to $u^*$ so as to compensate the rounding errors brought by the addition operations involved in the calculations of $u_1,\ldots, u_{L^*}$ and $u^*$.

\section{two examples}\label{S-4}

In this section, we use two examples to illustrate how to apply Theorem \ref{thm-1.0} to get lower bound estimates on the dimension of self-similar measures.
\begin{example}
\label{exam-4.1}
{\rm
Let $ \mu $ be the self-similar measure associated with an IFS $ \lbrace S_{i}\rbrace_{i=1}^{3} $ on $\R$  and the probability vector $ \lbrace\frac{1}{3},\frac{1}{3},\frac{1}{3}\rbrace $, where
\[ S_{1}(x) =\erfenzi x, \; S_{2}(x)=\frac{x+1}{3}, \;S_{3}(x)=\frac{x+3}{4}.\]
Let $K$ be the attractor of $ \lbrace S_{i}\rbrace_{i=1}^{3} $. It is easily checked that the convex hull of $K$ is the interval $[0,1]$.
Let $ \mu $ be the self-similar measure associated with $ \lbrace S_{i}\rbrace_{i=1}^{3} $ and the probability vector $ \lbrace\frac{1}{3},\frac{1}{3},\frac{1}{3}\rbrace $.
Let $ m=\prod_{n=1}^{\infty}\lbrace\frac{1}{3},\frac{1}{3},\frac{1}{3}\rbrace $ be the Bernoulli product measure on $ \Sigma=\lbrace 1,2,3\rbrace^{\bfN} $.
Let
\[ \huaD_1=\left \lbrace \left[0,\frac{1}{3}\right], \left[\frac{1}{3},\Erfenzi\right],\left[\Erfenzi,\frac{2}{3}\right],\left[\frac{2}{3},\frac{3}{4}\right],\left[\frac{3}{4},1\right]\right \rbrace,\] which is the partition of $ \unitinterval $ (with respect to $\mu$) generated by the endpoints of the intervals $ S_{i}\unitinterval, i=1,2,3 $.   This IFS is somehow special, in which the contraction ratio of each map is the reciprocal of an integer, and the translation part is a rational number.     Due to this property, we can use the self-similarity of $\mu$ to compute the precise value of $\mu(S_i^{-1}D)$ for every $i$ and $D\in \huaD_1$.   To see this, applying the self-similarity relation
$\mu=\sum_{i=1}^3 3^{-1}\mu\circ S_i^{-1}$ to the interval $[0,1/3]$, we get
\begin{align*}
\mu([0,1/3])&=3^{-1}\mu([0, 2/3])+3^{-1}\mu([-1,0])+3^{-1}\mu([-3, -5/3])\\
&=3^{-1}\mu([0, 2/3]),
\end{align*}
where in the second equality we used the facts that  $\mu$ is supported on $[0,1]$ and has no atoms. Similarly, we have
\begin{align*}
\mu([0,2/3])&=3^{-1}\mu([0, 4/3])+3^{-1}\mu([-1,1])+3^{-1}\mu([-3, -1/3])\\
&=2/3.
\end{align*}
From the above equalities, we see that $\mu(S_1^{-1}[0,1/3])=2/3$ and $\mu(S_2^{-1}[0,1/3])=\mu(S_3^{-1}[0,1/3])=0$.  Similarly by direct computations with a bare hand  we can obtain the precise value of $\mu(S_i^{-1}D)$ for each $i\in\{1,2,3\}$ and $D\in \huaD_1$; see Table \ref{tab:mu}.

\begin{table}
\begin{center}
\begin{tabular}{c|c|c|c}
$D\in \huaD_1$ & $\mu(S_1^{-1}D)$  & $\mu(S_2^{-1}D)$  & $\mu(S_3^{-1}D)$ \\
\hline
$[0,1/3]$ & 2/3 & 0 & 0 \\
$[1/3,1/2]$  & 1/3   & 1/2  & 0  \\
$[1/2, 2/3]$ & 0   & 1/2  & 0 \\
$[2/3, 3/4]$ &0& 0 & 0  \\
$[3/4, 1]$ & 0 & 0 &  1\\ \hline
\end{tabular}
\\ [2ex]
\caption{Precise values of $\mu(S_i^{-1}D)$ for $i=1,2,3$ and $D\in \huaD_1$.}
\label{tab:mu}
\end{center}
\end{table}
It follows that
\begin{equation}
\label{e-f3}
\sum_{D\in \huaD_1} f\left(3^{-1}\mu(S_1^{-1}D),  3^{-1}\mu(S_2^{-1}D), 3^{-1}\mu(S_3^{-1}D)\right)=\frac{5}{18} \left(\varphi(\frac{2}{5})+\varphi(\frac{3}{5})\right).
\end{equation}
By Theorem \ref{thm-1.0}(i),
\begin{equation}\nonumber
\begin{split}
\dim (\mu) &\geq
\dfrac{\log 3 - \frac{5}{18}\left(\varphi(\frac{2}{5})+\varphi(\frac{3}{5})\right)}{\frac{1}{3}\log 2 +\frac{1}{3}\log 3 +\frac{1}{3}\log 4 } \\
& \approx 0.86058762883316.
\end{split}
\end{equation}

Alternatively, instead of computing the precise values of $\mu(S_i^{-1}D)$ we may use Algorithm \ref{alg-3.8} to estimate $\mu(S_i^{-1}D)$ from above, then use \eqref{e-p1.2} in  Theorem \ref{thm-1.0}(i) to get a lower bound of $\dim (\mu)$.  The computation will be done by using the float-point number type double (binary64) on a standard x64 machine. Below we provide more details.

For a given closed subinterval $A$ of $[0,1]$, to estimate $\mu(A)$ from above,  we apply  Algorithm \ref{alg-3.8} in which we take $B=[0,1]$, $L=40$ and $\gamma=12.2*u$, where $u=2^{-53}\approx 1.1102\times10^{-16}$ denotes the unit roundoff.
A key ingredient of Algorithm \ref{alg-3.8} is the computation of $\Lambda_{n+1}^*$, $n=0,\ldots, L-1$. By \eqref{e-3.4e},
\begin{equation}
\label{e-3.4e*} \Lambda_{n+1}^*=\bigcup_{(t,[a,b])\in \Lambda_n} \left\{ \left(t/3,[S_{i}^{-1}(a)-\gamma, S_{i}^{-1}(b)+\gamma]\cap [0,1]\right):\; i=1,2, 3 \right\},
\end{equation}
where $S_1^{-1}(x)=2x$, $S_2^{-1}(x)=3x-1$ and $S_3^{-1}(x)=4x-3$.

The choice of the specific value $12.2*u$ for $\gamma$ is to guarantee that for every $a\in [0,1]$ and $i\in \{1,2,3\}$,
\begin{equation}\label{e-4.2e}
fl\left(S_i^{-1}(a)-\gamma\right)<S_i^{-1}(a),\qquad fl\left(S_i^{-1}(a)+\gamma\right)>S_i^{-1}(a),
\end{equation}
where $fl(x)$ is the floating-point representation of $x$ (cf. \cite{Higham2002}). For the sake of brevity, we only verify the first inequality in \eqref{e-4.2e} for $i=3$, and leave the  remaining cases to the reader.  Noticing that $S_3^{-1}(a)=4a-3$, by the rules of floating-point number arithmetics (cf. \cite[p.~40]{Higham2002}),
$$
fl(4a-3-\gamma)=((4a(1+\delta_1)-3)(1+\delta_2)-\gamma)(1+\delta_3)
$$
for some real numbers $\delta_1, \delta_2,\delta_3$ with  $|\delta_j|<u$. It follows that
\begin{align*}
fl(4a-3-\gamma)-(4a-3)&=4a\delta_1+(4a-3)(\delta_2+\delta_3)-\gamma+O(u^2)\\
&\leq 4a u+2|4a-3|u-\gamma+O(u^2)\\
&\leq 6u-\gamma-O(u^2)<0.
\end{align*}
which proves the first inequality in \eqref{e-4.2e} for $i=3$.

Next let us take one concrete example $A=[0, 1/2]$ to illustrate the main steps of Algorithm \ref{alg-3.8} for the estimation of $\mu(A)$. We start from $u_0=0$ and $\Lambda_0=\{1, [0,1/2]\}$.   Applying \eqref{e-3.4e*} with $n=0$ yields
$$
\Lambda_1^*=\{(1/3, [0,1]), (1/3, [0, 0.500000000000001]), (1/3, \emptyset)\}.
$$
Removing the elements $(1/3, [0,1])$ and $(1/3, \emptyset)$ from $\Lambda_1^*$ we obtain that $$\Lambda_1=\{(1/3, [0, 0.500000000000001])\}\quad\mbox{and}\quad  u_1=u_0+1/3=1/3.$$ Applying \eqref{e-3.4e*} with $n=1$, we have
$$
\Lambda_2^*=\{(1/9, [0,1]), (1/9, [0, 0.500000000000005]), (1/9, \emptyset)\}.
$$
Removing the element $(1/9, [0,1])$ and $(1/9, \emptyset)$ from $\Lambda_2^*$ we obtain that $$\Lambda_2=\{(1/9, [0, 0.500000000000005])\}\quad\mbox{and}\quad  u_2=u_1+1/9=4/9.$$ Continuing the above procedures, we  obtain $\Lambda_n$ and $u_n$ for $3\leq n\leq 33$, where
$$
u_{33}=\frac{2779530283277771}{3^{33}}, \mbox{ and }\Lambda_{33}=\{(3^{-33},[0,0.7497505160822597])\}.
$$
Then applying \eqref{e-3.4e*} with $n=33$,
$$
\Lambda_{34}^*=\{(3^{-34}, [0,1]), (3^{-34}, [0, 1]), (3^{-34}, \emptyset)\}.
$$
So $\Lambda_{34}=\emptyset$, $L^*=34$,
$$u^*=u_{34}=u_{33}+ 2\times 3^{-34}= \frac{8338590849833315}{16677181699666569}\approx 0.5000000000000018,$$
which is the end of our algorithm.
The above value $u^*$  is the obtained upper bound for $\mu([0,1/2])$.

In Table 2, we list our estimations of $\mu(S_i^{-1}D)$ from above for $D\in \huaD_1$, $i=1,2,3$. Then by \eqref{e-p1.2},
$$
\dim(\mu)\geq \frac{\log 3 -\sum_{D\in \huaD_1} f\left(y_1(D),y_2(D), y_3(D)\right) }{\frac{1}{3}\log 2 +\frac{1}{3}\log 3 +\frac{1}{3}\log 4}\approx 0.8605876288174681.
$$

\begin{table}
\begin{center}
\tiny
\begin{tabular}{c|c|c|c}
$D\in \huaD_1$ & $\mu(S_1^{-1}D)$  & $\mu(S_2^{-1}D)$  & $\mu(S_3^{-1}D)$ \\
\hline
$[0,1/3]$ & 0.6666666666666666 & $8.225263339969959\times 10^{-20}$ & 0.0\\
$[1/3,1/2]$  & 0.33333333333488513 & 0.5000000000000018 & 0.0  \\
$[1/2, 2/3]$ & $1.770354437904526\times 10^{-12}$  & 0.5000000000062046 & 0.0 \\
$[2/3, 3/4]$ &0.0& $1.770354437904526\times 10^{-12}$ &$8.225263339969959\times 10^{-20}$ \\
$[3/4, 1]$ & 0.0 & 0.0 &  1.0\\ \hline
\end{tabular}
\\ [2ex]
\caption{Estimations of $\mu(S_i^{-1}D)$ from above for $i=1,2,3$ and $D\in \huaD_1$.}
\label{tab:mu-above}
\end{center}
\end{table}

In the above computations, if we replace $\huaD_1$ by  $\huaD_n$ ($n\geq 2$), where $\huaD_n$ is the partition of $[0,1]$ generated by the endpoints of the intervals $S_{i_1\ldots i_n}([0,1])$, $i_1\ldots i_n\in \{1,2,3\}^n$, then we can obtain larger lower bounds on $\dim(\mu)$.  In practice,  for $n=2,3,4,5, 6$, we manage to run a program to compute the precise value of $\mu(S_i^{-1}D)$ for each $1\leq i\leq 3$ and $D\in \huaD_n$, and  use \eqref{e-p1.1} to get the corresponding lower bounds on $\dim(\mu)$. Whilst for $7\leq n\leq 14$, we use Algorithm \ref{alg-3.8} (in which we take  $B=[0,1]$, $L=40$ and $\gamma=12.2*u$) to estimate $\mu(S_i^{-1}D)$ from above for  each $i$ and $D\in \huaD_n$, and use \eqref{e-p1.2} to get the corresponding lower bounds. In Table \ref{tab:Example4.1} we list our computational results.

 Finally, we make a brief error analysis about the computations involved in Table \ref{tab:Example4.1}. The rounding errors come from the computations using \eqref{e-p1.1} or \eqref{e-p1.2}. According to the
floating-point arithmetics, the major rounding errors come from the summation in \eqref{e-p1.1} or \eqref{e-p1.2}
over $D\in \huaD_n$, $n\leq 14$. By \cite[p.~82, (4.4)]{Higham2002}, the total error $E_n$ coming from  the summation over $D\in \huaD_n$ has a bound
\begin{align*}
|E_n|&\leq (\#\huaD_n-1)\times u \times \mbox{ (Total Absolute Sum)}+O(u^2)  \\
   &\leq (3^{14}-1)\times u\times \log 3 +0.01 u\\
   & \leq 5.8338\times 10^{-10}.
\end{align*}
So according to the numerical results in Table 3, we have $\dim(\mu)\geq 0.935825938$.}
\end{example}

\begin{table}
\begin{center}
\small
\begin{tabular}{c|c}
	Partition level $n$ & Lower bound on $\dim(\mu)$ \\ \hline
	         1          &      0.86058762883316      \\
	         2          &     0.873884695870383      \\
	         3          &     0.887965887736415      \\
	         4          &     0.901645728024083      \\
	         5          &     0.909541991955753      \\
	         6          &     0.915681937458243 	 \\
	         \hline
	         7          &     0.920399986771506      \\
	         8          &     0.924018201523078      \\
	         9          &     0.926957262754457      \\
	        10          &     0.929374519513162      \\
	        11          &     0.931389937165221      \\
	        12          &     0.933105444767198      \\
	        13          &     0.934566254269004      \\
	        14          &     0.935825938794224      \\ \hline
\end{tabular}
\\[2ex]
\caption{Lower bounds on the dimension of $\mu$ in Example \ref{exam-4.1}}
\label{tab:Example4.1}
\end{center}
\end{table}

\begin{example}
\label{exam-4.2}
Let $ \mu=\mu_{\beta_3}$ be the Bernoulli convolution with parameter $\beta_3$, where $\beta_3\approx 1.83928675521416$ is the tribonacci number. For $n\geq 1$, let $\huaD_n$ be the partition of $[0,1]$ generated by the endpoints of the intervals $S_{i_1\ldots i_n}([0,1])$, $i_1\ldots i_n\in \{1,2\}^n$, where $S_1(x)=x/\beta_3$, $S_2(x)=x/\beta_3+1-1/\beta_3$. Similar to Example \ref{exam-4.1}, we can use Theorem \ref{thm-1.0} (in which we take  $\huaD=\huaD_n)$ to obtain the lower bounds on $\dim \mu_{\beta_3}$ by either computing the precise values of $\mu(S_i^{-1}D)$, or by estimating $\mu(S_i^{-1}D)$ from above via Algorithm \ref{alg-3.8} (in which we take  $B=[0,1]$,  $L=40$ and $\gamma=10.2*u=10.2\times 2^{-53}$). Keep in mind that in this case, the finite sequence $(\Lambda_n)_{n=0}^{L^*}$ in Algorithm \ref{alg-3.8} is defined recursively by
\begin{equation*}
 \Lambda_{n+1}^*=\bigcup_{(t,[a,b])\in \Lambda_n} \left\{ \left(t/2,[S_i^{-1}(a)-\gamma, S_i^{-1}(b)+\gamma]\cap [0,1]\right), \;i=1,2\right\},
\end{equation*}
where $\beta:=\beta_3$, $S_1^{-1}(x)=\beta x$ and $S_2^{-1}(x)=\beta x+1-\beta$.

 In Table \ref{tab:Example4.2}, we list our computational results, where the values in the second column are obtained by using the first approach, and that in the third column  are obtained by using the second approach.  Again the major rounding errors  come from the summation in \eqref{e-p1.1} or \eqref{e-p1.2}
over $D\in \huaD_n$, $n\leq 14$, with a bound given by
\begin{align*}
|E_n|&\leq  (2^{14}-1)\times u\times \log 2 +0.01 u
    \leq 1.2608\times 10^{-12}.
\end{align*}

{\rm
\begin{table}
\begin{center}
\small
 \begin{tabular}{c|c|c}
	Partition level $n$ & Lower bound (I) & Lower bound (II) \\ \hline
	         1          &   0.974971672609929  &   0.974971672566547     \\
	         2          &   0.974971672609929  &   0.974971672568187     \\
	         3          &   0.974971672609929  &   0.974971672570255      \\
	         4          &   0.979950375568122  &   0.979950375495215      \\
	         5          &   0.979950375568122  &   0.979950375438316      \\
	         6          &   0.979950375568122  &   0.979950375332795 	 \\
	         7          &   0.980368793386354  &   0.980368792874582      \\
	         8          &   0.980368793386354  &   0.980368792346810	      \\
	         9          &   0.980368793386354  &   0.980368791273832      \\
	        10          &   0.980405622363758  &   0.980405618127927      \\
	        11          &   0.980405622363758  &   0.980405614052234      \\
	        12          &   0.980405622363758  &   0.980405606355333      \\
	        13          &   0.980408973316171  &   0.980408941535664      \\
	        14          &   0.980408973316170  &   0.980408909920482     \\ \hline
\end{tabular}
\\[2ex]
\caption{Lower bounds on the dimension of $\mu=\mu_{\beta_3}$ in Example \ref{exam-4.2}, using two different methods for the evaluation of  $\mu(S_i^{-1}D)$.}
\label{tab:Example4.2}
\end{center}
\end{table}
}

\end{example}

\section{A uniform lower bound on the dimension of Bernoulli convolutions}\label{S-5}
This section is concerned with a computer-assisted proof
of Theorem \ref{thm-1.1}.

  For $ \beta>1$, let $\mu_{\beta} $ be the Bernoulli convolution associated with $\beta$. This is, $\mu_{\beta}$ is the self-similar measure associated with the IFS
\[ \left \lbrace S_{1,\beta}(x)=\beta^{-1} x, \; S_{2,\beta}(x)=\beta^{-1} x+1-\beta^{-1} \right \rbrace \]
and the probability vector $ (\erfenzi,\erfenzi)$.

Let us begin with an elementary  result.

\begin{lem}
\label{lem-4.1}
\begin{itemize}
\item[(i)] For each $\beta>1$ and $k\in \N$,  $\dimmubeta\geq \dimmu{\beta^{k}}$. Consequently  if  $\beta^k\geq 2$, then 	$$\dimmubeta\geq \frac{\log 2}{k \log\beta}.$$
\item[(ii)] For $\beta \in [\sqrt{2},\; 1.424041]$,
$$\dimmubeta\geq 0.98041>\dim(\mu_{\beta_3}).$$
\end{itemize}
\end{lem}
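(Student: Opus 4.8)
The plan is to prove Lemma~\ref{lem-4.1} in two parts, exploiting the self-similarity of Bernoulli convolutions under taking powers of $\beta$ and then a monotonicity/compactness argument to upgrade a lower bound at a single parameter to a lower bound over an interval.

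For part (i), the key observation is that iterating the IFS $\{S_{1,\beta}, S_{2,\beta}\}$ $k$ times produces an IFS $\{S_{w,\beta}\colon w\in\{1,2\}^k\}$ of $2^k$ maps, each with contraction ratio $\beta^{-k}$, whose associated self-similar measure (with the uniform probability vector $(2^{-k},\ldots,2^{-k})$) is again $\mu_\beta$. Now, the attractor of this $k$-fold IFS is again $[0,1]$ (after the usual affine normalization), and the distinct translation parts it produces are exactly the numbers $\sum_{j=1}^k \varepsilon_j(\beta-1)\beta^{-j}$ with $\varepsilon_j\in\{0,1\}$; these are dominated, in the appropriate sense, by the two-map IFS $\{S_{1,\beta^k}, S_{2,\beta^k}\}$ generating $\mu_{\beta^k}$. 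More precisely, I would compare projection entropies via Theorem~\ref{thm-2.1}: the coding map for the $k$-fold IFS factors through the coding map for $\{S_{1,\beta^k}, S_{2,\beta^k}\}$ composed with a further refinement, so the conditional entropy $H_m(\calP^{(k)}\mid \pi^{-1}\calB(\R))$ for the $k$-fold system is at least $k$ times that of the two-map system for $\beta^k$ (one collapses $k$ coordinates at a time instead of one). Dividing by the Lyapunov exponents, which are $-k\log\beta$ and $-\log(\beta^k)=-k\log\beta$ respectively, the factor $k$ cancels and yields $\dim(\mu_\beta)\ge \dim(\mu_{\beta^k})$. The ``consequently'' clause is then immediate: if $\beta^k\ge 2$ then the IFS $\{S_{1,\beta^k}, S_{2,\beta^k}\}$ satisfies the open set condition (the two image intervals have disjoint interiors), so $\dim(\mu_{\beta^k})=\frac{\log 2}{\log\beta^k}=\frac{\log 2}{k\log\beta}$ by Hutchinson's theorem.

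For part (ii), I would apply part (i) with $k=3$: for $\beta\in[\sqrt 2, 1.424041]$ we have $\beta^3\in[2^{3/2}, 1.424041^3]$, and in particular $\beta^3\ge 2\sqrt 2>2$, so $\dim(\mu_\beta)\ge \frac{\log 2}{3\log\beta}\ge \frac{\log 2}{3\log 1.424041}$. It remains a routine numerical check that $\frac{\log 2}{3\log 1.424041}\ge 0.98041$, and then one compares with the numerical value $\dim(\mu_{\beta_3})\approx 0.98040931953\pm 10^{-11}$ from \eqref{e-1.0'} to conclude $\dim(\mu_\beta)\ge 0.98041>\dim(\mu_{\beta_3})$. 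Here the right endpoint $1.424041$ is evidently chosen precisely so that $\frac{\log 2}{3\log 1.424041}$ sits just above $0.98041$; I would simply verify the inequality $3\log 1.424041\le \frac{\log 2}{0.98041}$ by estimating both sides.

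The main obstacle is part (i): making rigorous the claim that the $k$-fold IFS for $\beta$ has projection entropy at least $k$ times that of the two-map IFS for $\beta^k$. The subtlety is that the two-map IFS for $\beta^k$ has fewer maps and hence a coarser natural partition, so one must set up the correct factor map between the two symbolic spaces and invoke the behaviour of conditional entropy under such factors (monotonicity of $H_m(\calP\mid\cdot)$ in the conditioning $\sigma$-algebra, together with the identity $H_m(\calP_1\vee\cdots\vee\calP_k\mid\mathcal A)=\sum H_m(\calP_i\mid \calP_1\vee\cdots\vee\calP_{i-1}\vee\mathcal A)$). One should be careful that the ``extra'' information in the $k$-fold coding beyond the $\beta^k$-coding is exactly what makes the inequality go the right way, rather than a normalization artifact. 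Once this structural comparison is in place, everything else is elementary arithmetic.
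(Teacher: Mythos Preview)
Your proposal for part~(ii) contains a genuine numerical error: you apply part~(i) with $k=3$, but the resulting bound is far too weak. At the right endpoint $\beta=1.424041$ one gets
\[
\frac{\log 2}{3\log 1.424041}\approx 0.6536,
\]
which is nowhere near $0.98041$. The correct choice is $k=2$: for $\beta\in[\sqrt 2,\,1.424041]$ one has $\beta^2\ge 2$, hence
\[
\dim(\mu_\beta)\ge \frac{\log 2}{2\log\beta}\ge \frac{\log 2}{2\log 1.424041}\approx 0.98041006>\dim(\mu_{\beta_3}).
\]
Indeed the endpoint $1.424041$ is chosen so that $\tfrac{\log 2}{2\log 1.424041}$ just exceeds $0.98041$, not $\tfrac{\log 2}{3\log 1.424041}$.

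For part~(i), your projection-entropy route is quite different from the paper's argument and, as you yourself note, has a nontrivial obstacle: you need the inequality
\[
(k-1)\log 2\;\ge\; H_m\bigl(\calP^{(k)}\mid \pi_\beta^{-1}\calB(\R)\bigr)-H_{m'}\bigl(\calP\mid \pi_{\beta^k}^{-1}\calB(\R)\bigr),
\]
and it is not clear how the ``factor map'' you describe produces this directly, since the two coding maps live on different symbolic spaces with different natural partitions. The paper instead passes to the unnormalised random series $\nu_\beta=\mathrm{law}\bigl(\sum_{n\ge 0}\epsilon_n\beta^{-n}\bigr)$, observes that separating indices divisible by $k$ from the rest yields a convolution factorisation $\nu_\beta=\nu_{\beta^k}*\eta$ for some probability measure $\eta$, and then invokes the elementary fact $\dim(\nu*\eta)\ge\dim(\nu)$. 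This is a two-line argument with no structural comparison of conditional entropies required; your approach, even if it can be completed, is considerably more work for the same conclusion.
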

\begin{proof}
Part(i) was proved in \cite[Proposition 2.1]{HareSidorov2018} for algebraic
parameter values $\beta$. The extension to the general parameters is similar in spirit. For completeness, we include a proof.  Let $\nu_\beta$ denote the probability distribution of the random series
\begin{equation}
\label{e-4.1}
\sum_{n=0}^\infty\epsilon_n \beta^{-n},
\end{equation}
 where $(\epsilon_n)$ is a sequence of independent and identically distributed random variables, taking the values $0$ and $1$ with equal probability. It is easy to check that $\mu_\beta(\cdot)=\nu_\beta\left(\frac{\beta}{\beta-1}\cdot\right)$, so $\dim(\mu_\beta)=\dim(\nu_\beta)$. Meanwhile, we note that for $k\in \N$,
$$
\nu_\beta=\nu_{\beta^k}*\eta
$$
for some probability measure $\eta$. To see this decomposition, consider the series \eqref{e-4.1} and separate the terms divisible by $k$ from the rest. Hence by \cite[Lemma 2.2]{FNW2002},
 $$
\dim(\nu_\beta)=\dim (\nu_{\beta^k}*\eta) \geq \dim(\nu_{\beta^k}).$$
  So $\dim(\mu_\beta)\geq \dim(\mu_{\beta^k})$. Whenever $\beta^k\geq 2$, the IFS $\{S_{1,\beta^k}, S_{2, \beta^k}\}$ satisfies the open set condition, it follows that $\dim(\mu_{\beta^k})=\log 2/(k \log \beta)$. This proves (i).

To see (ii), let $\beta \in [\sqrt{2},\; 1.424041]$. Then $\beta^2\geq 2$, so by (i),
$$
\dimmubeta\geq \frac{\log 2}{2 \log\beta}\geq \frac{\log 2}{2 \log 1.424041}\approx  0.980410065731842>\dim(\mu_{\beta_3}),
$$
where in the last inequality we used \eqref{e-1.0'}.
\end{proof}

Next we present our main method for  producing a uniform lower bound on  $\dim(\mu_\beta)$ when $\beta$ runs over  $(\sqrt{2},2)$.  For given $\beta>1$ and $N\in \N$, let $ \huaD_{N,\beta} $ be the partition of $ \unitinterval $ generated by the points in the following set
\begin{equation}
\label{e-s1.1}
\bigcup_{I\in \{1,2\}^N} \{S_{I, \beta}(0), \;S_{I, \beta}(1)\},
\end{equation}
where $S_{I,\beta}:=S_{i_1,\beta}\circ\cdots\circ S_{i_N,\beta}$ for $I=i_1\cdots i_N$. Let $f=f_2: \R^2_+\to \R$ be defined as in \eqref{e-f1.1} (in which we take $\ell=2$).

\begin{proposition}
\label{pro-4.3}
Let $\beta\in [\sqrt{2},2)$ and $\delta>0$.  Set
\begin{equation}
\label{e-4.2}
\epsilon:=\epsilon(\beta,\delta)=\left\{
\begin{array}{ll}
\frac{\delta}{\beta}(1+\frac{3}{\beta^4}) & \quad\mbox{ if  } \beta\leq 1.5,\\
\frac{\delta}{\beta}(1+\frac{2}{\beta^3}) & \quad \mbox{ if  } \beta> 1.5.
\end{array}
\right.
\end{equation}
For $N\in \N$, set
$$
t(\beta, \delta, N)=\sum_{[a,b]\in  \huaD_{N,\beta} }f\left(\frac{1}{2} \mu_\beta(S_{1,\beta}^{-1}[a-\epsilon,b+\epsilon]),\;  \frac{1}{2} \mu_\beta(S_{2,\beta}^{-1}[a-\epsilon,b+\epsilon])\right).
$$
Then for any $\beta'\in [\beta, \beta+\delta]$ with $\beta'\leq 2$,
\begin{equation}
\label{e-pro}
\dim(\mu_{\beta'})\geq \frac{(\log 2)-t(\beta, \delta, N)}{\log (\beta+\delta)}.
\end{equation}
\end{proposition}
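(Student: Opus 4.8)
The plan is to apply Theorem~\ref{thm-1.0}(i), in the form \eqref{e-p1.2}, to the self-similar measure $\mu_{\beta'}$ and the IFS $\{S_{1,\beta'},S_{2,\beta'}\}$ with probability vector $(1/2,1/2)$, but using the partition $\huaD_{N,\beta}$ attached to the parameter $\beta$ (not $\beta'$). Since $\mu_{\beta'}$ is supported on $[0,1]$ and is non-atomic, $\huaD_{N,\beta}$ is a finite Borel partition of $\R$ with respect to $\mu_{\beta'}$. Here $\sum_i-p_i\log p_i=\log 2$ and $\sum_i-p_i\log\rho_i=\log\beta'$, and if we set $y_i([a,b]):=\tfrac12\mu_\beta(S_{i,\beta}^{-1}[a-\epsilon,b+\epsilon])$ then $\sum_{[a,b]\in\huaD_{N,\beta}}f(y_1([a,b]),y_2([a,b]))=t(\beta,\delta,N)$ by definition of $t$. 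Thus \eqref{e-p1.2} yields $\dim(\mu_{\beta'})\ge\big((\log 2)-t(\beta,\delta,N)\big)/\log\beta'$ as soon as its hypothesis is verified, i.e.\ as soon as
\begin{equation}\label{e-plan}
\tfrac12\mu_{\beta'}(S_{i,\beta'}^{-1}[a,b])\le \tfrac12\mu_\beta(S_{i,\beta}^{-1}[a-\epsilon,b+\epsilon]),\qquad i\in\{1,2\},\ [a,b]\in\huaD_{N,\beta}.
\end{equation}
Since $1<\beta'\le\beta+\delta$ we have $0<\log\beta'\le\log(\beta+\delta)$, so when $(\log 2)-t(\beta,\delta,N)\ge0$ the inequality $\big((\log 2)-t(\beta,\delta,N)\big)/\log\beta'\ge\big((\log 2)-t(\beta,\delta,N)\big)/\log(\beta+\delta)$ gives \eqref{e-pro}; and when $(\log 2)-t(\beta,\delta,N)<0$, \eqref{e-pro} is trivial since $\dim(\mu_{\beta'})\ge0$. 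Hence everything reduces to \eqref{e-plan}.

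To prove \eqref{e-plan} I would pass to the symbolic space. Let $\pi_\beta,\pi_{\beta'}\colon\Sigma=\{1,2\}^{\N}\to[0,1]$ be the coding maps of the two IFSs and $m=\prod_{n=1}^\infty\{\frac12,\frac12\}$, so that $\mu_\beta=m\circ\pi_\beta^{-1}$ and $\mu_{\beta'}=m\circ\pi_{\beta'}^{-1}$. By Lemma~\ref{lmm2.2}, the two sides of \eqref{e-plan} are $m([i]\cap\pi_{\beta'}^{-1}[a,b])$ and $m([i]\cap\pi_\beta^{-1}[a-\epsilon,b+\epsilon])$. Set $\varrho:=\sup_{x\in\Sigma}|\pi_\beta(x)-\pi_{\beta'}(x)|$. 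If $x\in[i]$ and $\pi_{\beta'}(x)\in[a,b]$, then $\pi_\beta(x)\in[a-\varrho,b+\varrho]$; so, provided $\varrho\le\epsilon$, we get $[i]\cap\pi_{\beta'}^{-1}[a,b]\subseteq[i]\cap\pi_\beta^{-1}[a-\epsilon,b+\epsilon]$, and applying $m$ yields \eqref{e-plan}. (The legitimacy of enlarging the intervals in the definition of $t$ is exactly the monotonicity of $f$ from Lemma~\ref{lmm2.3}.) Hence it remains only to show $\varrho\le\epsilon(\beta,\delta)$, which is the one genuinely computational point and the main obstacle.

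For that, I would start from the closed form $\pi_\beta(x)=(\beta-1)\sum_{n\,:\,x_n=2}\beta^{-n}$, immediate from iterating $S_{1,\beta},S_{2,\beta}$ at $0$. Then $\pi_\beta(x)-\pi_{\beta'}(x)=\sum_{n\,:\,x_n=2}d_n$ with $d_n=(\beta-1)\beta^{-n}-(\beta'-1)\beta'^{-n}$; since the sequences $(x_n)$ run over all of $\{1,2\}^{\N}$ and $\sum_{n\ge1}d_n=1-1=0$, the supremum of $|\pi_\beta(x)-\pi_{\beta'}(x)|$ over $x$ equals $\sum_{n\,:\,d_n>0}d_n=\sum_{n\,:\,d_n<0}(-d_n)$. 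Comparing $(\beta-1)/(\beta'-1)<1$ with the strictly decreasing sequence $(\beta/\beta')^n$ shows $d_n>0$ exactly for $n>J^*$, where $J^*=(\log\tfrac{\beta'-1}{\beta-1})/(\log\tfrac{\beta'}{\beta})$, and summing the geometric tail gives $\varrho=\beta^{-J}-\beta'^{-J}\le J\delta\,\beta^{-J-1}$ with $J=\lfloor J^*\rfloor$. Finally, concavity of $u\mapsto\log(e^u-1)$ gives $1<J^*<\beta/(\beta-1)$, so that, using $\beta\ge\sqrt2$, $J\le3$ if $\beta\le1.5$ and $J\le2$ if $\beta>1.5$; substituting these into $J\delta\beta^{-J-1}$ and comparing with the two branches of \eqref{e-4.2}---which amounts to checking a handful of elementary polynomial inequalities in $\beta$ valid on $[\sqrt2,2]$---yields $\varrho\le\epsilon(\beta,\delta)$. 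The edge cases $\beta'=\beta$ (where $\varrho=0$) and $\beta'=2$ (covered since Theorem~\ref{thm-1.0} applies to any system of similarities) are trivial.
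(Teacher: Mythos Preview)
Your argument is correct and, in fact, slightly more efficient than the paper's. The paper reduces to the same inequality \eqref{e-plan} but proves it in two separate steps: first it explicitly computes $S_{i,\beta'}^{-1}[a,b]$ and compares it with $S_{i,\beta}^{-1}[a-\epsilon,b+\epsilon]$ (this is what produces the leading term $\delta/\beta$ in $\epsilon$), and then it invokes Lemma~\ref{lem-5.4}(ii) to replace $\mu_{\beta'}$ by $\mu_\beta$ at the cost of the $\xi\delta$-fattening (this is the $2\beta^{-3}$ or $3\beta^{-4}$ term). By contrast you work entirely in the symbolic space and only need $\varrho:=\sup_x|\pi_\beta(x)-\pi_{\beta'}(x)|\le\epsilon$; since Lemma~\ref{lmm2.2} identifies both sides of \eqref{e-plan} with $m([i]\cap\pi_{\beta'}^{-1}[a,b])$ and $m([i]\cap\pi_\beta^{-1}[a-\epsilon,b+\epsilon])$, this one inequality suffices. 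Your approach therefore even shows that the term $\delta/\beta$ in the definition of $\epsilon$ is not needed.

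Your estimate of $\varrho$ is also different from the paper's. The paper bounds $|g_{\bf i}'(x)|$ pointwise (Lemma~\ref{lem-4.4}) and then uses the mean value theorem (Lemma~\ref{lem-5.4}(i)) to get $\varrho\le 2\beta^{-3}\delta$ or $3\beta^{-4}\delta$. You instead compute $\varrho$ exactly as a telescoping geometric sum, obtaining $\varrho=\beta^{-J}-\beta'^{-J}\le J\delta\beta^{-J-1}$ with $J=\lfloor J^*\rfloor$, and then bound $J$ via the mean value theorem applied to the concave function $u\mapsto\log(e^u-1)$, getting $J^*<\beta/(\beta-1)$. This gives the same worst-case bounds ($J\le 3$ for $\beta\in[\sqrt2,1.5]$, $J\le 2$ for $\beta>1.5$), and the remaining checks that $J\delta\beta^{-J-1}\le\epsilon(\beta,\delta)$ for each $J\in\{1,2,3\}$ (resp.\ $\{1,2\}$) are indeed elementary polynomial inequalities; e.g.\ for $J=3$, $\beta\le1.5$ one needs $\beta^4-3\beta+3\ge0$, which holds on $[\sqrt2,1.5]$ since its minimum there is $7-3\sqrt2>0$.
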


To prove the above proposition, we need several lemmas.

\begin{lem}
\label{lem-4.4}
Let ${\bf i}=(i_n)_{n=1}^\infty\in \{0,1\}^\N$. Define $g_{\bf i}\colon (0,1)\to \Bbb R$ by
\begin{equation}
\label{e-4.3}g_{\bf i}(x):=(1-x)\sum_{n=1}^\infty i_n x^{n-1},\qquad x\in (0, 1).
\end{equation}
Then for any positive integer $k\geq 2$ and $x\in \left[1-\frac{1}{k}, 1-\frac{1}{k+1}\right)$,
$$
|g_{\bf i}'(x)|\leq kx^{k-1}.
$$
\end{lem}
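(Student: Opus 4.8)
The plan is to find a closed form for $g_{\bf i}'$, reduce to a normalized case via an involution, and then squeeze $g_{\bf i}'(x)$ between two telescoping series.

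First I would differentiate term by term. Since $g_{\bf i}(x)=(1-x)\sum_{n\geq 1}i_nx^{n-1}$ is $1-x$ times a power series of radius of convergence at least $1$, differentiation on $(0,1)$ is legitimate, and collecting the coefficient of $x^{n-1}$ gives
\[
g_{\bf i}'(x)=\sum_{n\geq 1}(i_{n+1}-i_n)\,nx^{n-1},\qquad x\in(0,1).
\]
Next I would invoke the involution ${\bf i}\mapsto{\bf 1}-{\bf i}:=(1-i_n)_{n=1}^\infty$. Because $\sum_{n\geq1}(1-x)x^{n-1}=1$, we have $g_{{\bf 1}-{\bf i}}=1-g_{\bf i}$, so $g_{{\bf 1}-{\bf i}}'=-g_{\bf i}'$. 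Since exactly one of $i_1$ and $1-i_1$ equals $0$, it suffices to prove the estimate under the extra assumption $i_1=0$.

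Assuming $i_1=0$, the partial sums telescope nicely: $\sum_{j=1}^{n}(i_{j+1}-i_j)=i_{n+1}\in\{0,1\}$. Together with $nx^{n-1}\to0$ for fixed $x\in(0,1)$, Abel summation then yields
\[
g_{\bf i}'(x)=\sum_{n\geq1}i_{n+1}\bigl(nx^{n-1}-(n+1)x^{n}\bigr)=\sum_{n\geq1}i_{n+1}\,x^{n-1}\bigl(n-(n+1)x\bigr).
\]
Now $n-(n+1)x\geq0$ precisely when $n\geq \frac{x}{1-x}=\frac1{1-x}-1$, and the hypothesis $x\in[1-\frac1k,1-\frac1{k+1})$ forces $\frac1{1-x}-1\in[k-1,k)$; hence the terms with $n\geq k$ are $\geq0$ and those with $n\leq k-1$ are $\leq0$. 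Using $0\leq i_{n+1}\leq1$, I would discard the nonnegative terms to bound $g_{\bf i}'(x)$ from below and discard the nonpositive terms to bound it from above. Writing $b_n:=nx^{n-1}$, both resulting series telescope, $\sum_{n\geq k}(b_n-b_{n+1})=b_k$ and $\sum_{n=1}^{k-1}(b_n-b_{n+1})=b_1-b_k$, so
\[
1-kx^{k-1}\ \leq\ g_{\bf i}'(x)\ \leq\ kx^{k-1}.
\]
Since $kx^{k-1}\geq0$, the left-hand bound is in particular $\geq-kx^{k-1}$, and therefore $|g_{\bf i}'(x)|\leq kx^{k-1}$.

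The step I expect to be the crux is the reduction to $i_1=0$ before applying summation by parts. A direct triangle inequality on the first display only gives $|g_{\bf i}'(x)|\leq\sum_{n\geq1}nx^{n-1}=(1-x)^{-2}$, which is far too weak, and a summation by parts that merely bounds the partial sums by $1$ in absolute value still loses essentially a factor of $2$. The gain comes from the fact that, once $i_1=0$, the partial sums actually lie in $\{0,1\}$, so the positive and the negative contributions of the series can be isolated and each evaluated exactly by telescoping. Everything else — the coefficient computation, the sign analysis of $n-(n+1)x$, and the two telescopings — is routine.
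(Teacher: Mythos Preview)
Your proof is correct and follows essentially the same approach as the paper: both split the series for $g_{\bf i}'(x)$ into a nonpositive part (indices below the threshold) and a nonnegative part (indices above), bound each by replacing the $i$-coefficients by $1$, and evaluate the resulting sums by telescoping to $kx^{k-1}$. The only cosmetic differences are that the paper works directly with $g_{\bf i}'(x)=-i_1+\sum_{n\geq 2} i_n\big((n-1)x^{n-2}-nx^{n-1}\big)$ and absorbs the $-i_1$ term into the negative block, so no involution or Abel summation is needed; your route via ${\bf i}\mapsto {\bf 1}-{\bf i}$ and summation by parts lands on the same formula after the reindex $n\mapsto n+1$.
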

\begin{proof}
Let $k\geq2$ and $x\in \left[1-\frac{1}{k}, 1-\frac{1}{k+1}\right)$. Then
$$
g_{\bf i}'(x)=-i_1+\sum_{n=2}^\infty i_n ((n-1)x^{n-2}-nx^{n-1})=I_1+I_2,
$$
where
\begin{align*}
I_1&:=-i_1+\sum_{n=2}^k i_n ((n-1)x^{n-2}-nx^{n-1}), \\
I_2&:=\sum_{n=k+1}^\infty i_n ((n-1)x^{n-2}-nx^{n-1}).
\end{align*}
Clearly, $I_1\leq 0$ and $I_2>0$. Moreover,
\begin{equation*}
\begin{split}
-I_1&=i_1+\sum_{n=2}^k i_n (nx^{n-1}-(n-1)x^{n-2})\\
&\leq 1+\sum_{n=2}^k  (nx^{n-1}-(n-1)x^{n-2})=kx^{k-1},
\end{split}
\end{equation*}
and
$$
I_2\leq \sum_{n=k+1}^\infty  ((n-1)x^{n-2}-nx^{n-1})=kx^{k-1}.
$$
Hence $|g_{\bf i}(x)|=|I_1+I_2|\leq \max\{-I_1, I_2\}\leq kx^{k-1}$.
\end{proof}

For $\beta>1$, let $ \pi_{\beta} \colon \lbrace1,2\rbrace^{\bfN}\to \R $ be the coding map associated with the IFS $\{S_{1,\beta}, \; S_{2,\beta}\}$. A direct calculation yields that
\begin{equation}\label{e-4.4}
\pi_\beta(x)=(1-\beta^{-1}) \sum_{n=1}^\infty (x_n-1) \beta^{-(n-1)},\qquad x=(x_n)_{n=1}^\infty.
\end{equation}
\begin{lem}
\label{lem-5.4}
Let $\sqrt{2}\leq \beta<\beta'\leq 2$.  Then
\begin{itemize}
\item[(i)] For any $u\in \lbrace1,2\rbrace^{\bfN}$,
$$
|\pi_\beta(u)-\pi_{\beta'}(u)|\leq \left\{
\begin{array}{ll}
2\beta^{-3} (\beta'-\beta)  &\quad \mbox{ if } \beta> 1.5,\\
3\beta^{-4} (\beta'-\beta)&\quad \mbox{ if } \beta\leq 1.5.
\end{array}
\right.
$$
\item[(ii)] For $c,d\in \R$ with  $c<d$,
$$
\mu_{\beta'}([c,\;d])\leq \left\{
\begin{array}{ll}
\mu_{\beta}([c-2\beta^{-3}(\beta'-\beta) ,\; d+ 2\beta^{-3}(\beta'-\beta)] )&\quad \mbox{ if } \beta> 1.5,\\
\mu_{\beta}([c-3\beta^{-4}(\beta'-\beta) , \; d+ 3\beta^{-4}(\beta'-\beta)] )&\quad \mbox{ if } \beta\leq 1.5.
\end{array}
\right.
$$

\end{itemize}
\end{lem}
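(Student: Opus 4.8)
The plan is to reduce the whole statement to the one–variable function $g_{\bf i}$ introduced in Lemma~\ref{lem-4.4} and then to propagate the resulting pointwise estimate to measures by pushing forward the Bernoulli measure. For part~(i) I would fix $u=(u_n)_{n=1}^\infty\in\{1,2\}^{\bfN}$, put ${\bf i}=(u_n-1)_{n=1}^\infty\in\{0,1\}^{\bfN}$, and observe from \eqref{e-4.4} that $\pi_\beta(u)=g_{\bf i}(1/\beta)$ and $\pi_{\beta'}(u)=g_{\bf i}(1/\beta')$, with $g_{\bf i}$ as in \eqref{e-4.3}. Since $g_{\bf i}$ is smooth on $(0,1)$, the mean value theorem yields a $\xi$ strictly between $1/\beta'$ and $1/\beta$ with
\[
|\pi_\beta(u)-\pi_{\beta'}(u)|=|g_{\bf i}(1/\beta)-g_{\bf i}(1/\beta')|=|g_{\bf i}'(\xi)|\cdot\left|\frac1\beta-\frac1{\beta'}\right|=|g_{\bf i}'(\xi)|\cdot\frac{\beta'-\beta}{\beta\beta'}.
\]
As $\sqrt2\le\beta<\beta'\le2$, we have $\xi\in(1/\beta',1/\beta)\subseteq(1/2,1/\sqrt2)\subseteq[1/2,3/4)$, the last inclusion because $\sqrt2>4/3$; hence $\xi$ lies in one of the intervals $[1/2,2/3)$ or $[2/3,3/4)$ to which Lemma~\ref{lem-4.4} applies, with $k=2$ or $k=3$ respectively.

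The bookkeeping then splits according to the two cases in the statement. If $\beta>3/2$ then $1/\beta<2/3$, so necessarily $\xi\in[1/2,2/3)$ and Lemma~\ref{lem-4.4} gives $|g_{\bf i}'(\xi)|\le2\xi<2/\beta$; substituting this and using $\beta'\ge\beta$ yields $|\pi_\beta(u)-\pi_{\beta'}(u)|\le 2(\beta'-\beta)/(\beta^2\beta')\le 2\beta^{-3}(\beta'-\beta)$. If $\beta\le3/2$ then $\xi$ may lie in either subinterval: in $[1/2,2/3)$ one has $|g_{\bf i}'(\xi)|\le2\xi<2/\beta\le3/\beta^2$ (the last inequality being equivalent to $2\beta\le3$), while in $[2/3,3/4)$ one has $|g_{\bf i}'(\xi)|\le3\xi^2<3/\beta^2$ because $\xi<1/\beta$; in either case $|g_{\bf i}'(\xi)|\le3/\beta^2\le3\beta'/\beta^3$, and multiplying by $(\beta'-\beta)/(\beta\beta')$ gives $|\pi_\beta(u)-\pi_{\beta'}(u)|\le3\beta^{-4}(\beta'-\beta)$. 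This is exactly~(i).

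For part~(ii), let $\rho$ denote the right–hand bound from~(i), so $\rho=2\beta^{-3}(\beta'-\beta)$ if $\beta>3/2$ and $\rho=3\beta^{-4}(\beta'-\beta)$ if $\beta\le3/2$, and recall that $\mu_\beta=(\pi_\beta)_\ast m$ and $\mu_{\beta'}=(\pi_{\beta'})_\ast m$, where $m=\prod_{n=1}^\infty(1/2,1/2)$ on $\{1,2\}^{\bfN}$. If $u$ satisfies $\pi_{\beta'}(u)\in[c,d]$, then by~(i) $\pi_\beta(u)\in[\pi_{\beta'}(u)-\rho,\,\pi_{\beta'}(u)+\rho]\subseteq[c-\rho,d+\rho]$, whence $\pi_{\beta'}^{-1}([c,d])\subseteq\pi_\beta^{-1}([c-\rho,d+\rho])$; applying $m$ to both sides gives $\mu_{\beta'}([c,d])\le\mu_\beta([c-\rho,d+\rho])$, which is~(ii). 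The only genuinely delicate point is the case analysis in~(i) when $\beta\le3/2$: the mean value theorem controls the intermediate point $\xi$ only through the crude inclusion $\xi\in(1/\beta',1/\beta)$, so one has to check that in each of the two admissible subintervals the estimate $k\xi^{k-1}$ from Lemma~\ref{lem-4.4}, combined with $\xi<1/\beta$ and the constraints $\sqrt2\le\beta\le3/2$, collapses to the single uniform constant $3\beta^{-2}$. Everything else is routine.
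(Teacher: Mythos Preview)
Your proof is correct and follows essentially the same route as the paper: reduce to $g_{\bf i}$ via \eqref{e-4.3}--\eqref{e-4.4}, apply the mean value theorem, bound $|g_{\bf i}'(\xi)|$ using Lemma~\ref{lem-4.4} with $k=2$ or $k=3$ according to whether $\xi$ lies in $[1/2,2/3)$ or $[2/3,3/4)$, and then deduce (ii) from the preimage inclusion. The only cosmetic difference is that the paper immediately bounds $(\beta'-\beta)/(\beta\beta')$ by $\beta^{-2}(\beta'-\beta)$ before splitting into cases, whereas you carry the factor $1/(\beta\beta')$ through and use $\beta'\ge\beta$ at the end.
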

\begin{proof}
We first prove (i). Let $u=(u_n)_{n=1}^\infty\in \lbrace1,2\rbrace^{\bfN}$.  Define ${\bf i}=(i_n)_{n=1}^\infty$ by $i_n=u_n-1$. By \eqref{e-4.3}--\eqref{e-4.4} and the mean value theorem,
\begin{equation}
\label{e-4.5}
|\pi_\beta(u)-\pi_{\beta'}(u)|=|g_{\bf i}(\beta^{-1})-g_{\bf i}((\beta')^{-1})|=\left(\frac{1}{\beta}-\frac{1}{\beta'}\right) |g_{\bf i}'(x)|\leq \beta^{-2}(\beta'-\beta) |g_{\bf i}'(x)|
\end{equation}
for some $x\in [1/\beta', 1/\beta]\subset [1/2, 1/\beta]$.

If $\beta>1.5$, then $x\in [1/2, 2/3)$ and by Lemma \ref{lem-4.4},  $|g_{\bf i}'(x)|\leq 2x\leq 2/\beta$, so by \eqref{e-4.5},
$|\pi_\beta(u)-\pi_{\beta'}(u)|\leq 2\beta^{-3}(\beta'-\beta)$.

Next assume that  $\sqrt{2}\leq \beta\leq 1.5$.  Since $x\in [1/2, 1/\beta]$,  either $x\in [1/2, 2/3)$ or $x\in [2/3, 3/4)$. If the first case occurs, then the argument in the last paragraph shows that  $|\pi_\beta(u)-\pi_{\beta'}(u)|\leq 2\beta^{-3}(\beta'-\beta)\leq 3\beta^{-4}(\beta'-\beta)$.  Else if $x\in [2/3, 3/4)$, then by Lemma  \ref{lem-4.4},
$|g_{\bf i}'(x)|\leq 3x^2\leq 3\beta^{-2}$, so by \eqref{e-4.5},
$|\pi_\beta(u)-\pi_{\beta'}(u)|\leq 3\beta^{-4}(\beta'-\beta)$.  This completes the proof of (i).

Next we prove (ii). Since $\mu_{\beta'}=m\circ \pi_{\beta'}^{-1}$ and $\mu_{\beta}=m\circ \pi_\beta^{-1}$, to prove (ii) it suffices to show that
$$
\pi_{\beta'}^{-1}([c,d])\subset \left\{
\begin{array}{ll}
\pi_{\beta}^{-1}([c-2\beta^{-3}(\beta'-\beta) ,\; d+ 2\beta^{-3}(\beta'-\beta)] )&\quad \mbox{ if } \beta> 1.5,\\
\pi_{\beta}^{-1}([c-3\beta^{-4}(\beta'-\beta) , \; d+ 3\beta^{-4}(\beta'-\beta)] )&\quad \mbox{ if } \beta\leq 1.5.
\end{array}
\right.
$$
Clearly the above inclusion follows from (i).
\end{proof}

Now we are ready to prove Proposition \ref{pro-4.3}.

\begin{proof}[Proof of Proposition \ref{pro-4.3}] Let $N\in \N$ and $\beta'\in [\beta,\beta+\delta]$ with $\beta'\leq 2$.
Since $\mu_{\beta'}$ is supported on $[0,1]$ and has no atoms, it follows that $\huaD_{\beta, N}$ is a finite Borel partition of $\R$ with respect to $\mu_{\beta'}$.  Applying Theorem \ref{thm-1.1}(i) to the IFS $\{S_{1,\beta'},\; S_{2,\beta'}\}$ and the probability vector $(1/2, 1/2)$,
$$
\dim(\mu_{\beta'})\geq \frac{\log 2 -\sum_{[a,b]\in \huaD_{N, \beta}}{f\left(\frac{1}{2}\mu_{\beta'}(S_{1,\beta'}^{-1}[a,b]),\; \frac{1}{2}\mu_{\beta'}(S_{2,\beta'}^{-1}[a,b])\right) }}{\log \beta'}
$$
By the above inequality and the increasing monotonicity of $f$,  to prove \eqref{e-pro} it suffices to show that
 for any $[a,b]\in \huaD_{\beta, N}$,
\begin{equation}
\label{e-4.6}
\mu_{\beta'}(S_{i,\beta'}^{-1}[a,b])\leq \mu_{\beta}(S_{i,\beta}^{-1}[a-\epsilon,b+\epsilon]),\quad i=1,2,
\end{equation}
where $\epsilon=\epsilon(\beta,\delta)$ is defined as in \eqref{e-4.2}. To  this end, set
$$
\xi=\left\{\begin{array}{ll}
2\beta^{-3}  &\quad \mbox{ if } \beta> 1.5,\\
3\beta^{-4} &\quad \mbox{ if } \beta\leq  1.5.
\end{array}
\right.
$$
Simply notice that
\begin{align*}
S_{1,\beta'}^{-1}[a,b]&=[\beta'a, \beta'b]\subset [\beta a, (\beta+\delta)b], \\
 \quad S_{2,\beta'}^{-1}[a,b]&=[\beta'a+1-\beta', \beta'b+1-\beta']\subset [(\beta+\delta)(a-1)+1, \beta(b-1)+1],
\end{align*}
and
\begin{align*}
S_{1,\beta}^{-1}[a-\epsilon,b+\epsilon]&=[\beta a-\beta \epsilon, \beta b+\beta \epsilon],\\
 S_{2,\beta}^{-1}[a-\epsilon,b+\epsilon]&=[\beta (a-\epsilon)+1-\beta, \beta (b+\epsilon)+1-\beta].
\end{align*}
Since $\beta\epsilon=\delta(1+\xi)$ and $[a,b]\subset[0,1]$,  it follows that
\begin{equation}
\begin{split}
[\beta a-\xi\delta, (\beta+\delta)b+\xi\delta]&\subset [\beta a-\beta \epsilon, \beta b+\beta \epsilon],\; \mbox{ and}\\
 [(\beta+\delta)(a-1)+1-\xi\delta, \beta(b-1)+1+\xi\delta]&\subset [\beta (a+\epsilon)+1-\beta, \beta (b+\epsilon)+1-\beta].
\end{split}
\end{equation}
Therefore for each $i\in \{1,2\}$, the  $\xi\delta$-neighborhood of $S_{i,\beta'}^{-1}[a,b]$ is contained in $S_{i,\beta}^{-1}[a-\epsilon,b+\epsilon]$.
Combining this fact with Lemma \ref{lem-5.4}(ii) yields \eqref{e-4.6}.
\end{proof}

Proposition \ref{pro-4.3} provides  a way to obtain a uniform lower bound on $\dim(\mu_\beta)$  when $\beta$ varies in a given interval.  For instance, let us take $\beta=1.42404$, $\delta=2\times 10^{-5}$ and $N=5$ in   Proposition \ref{pro-4.3}. A computation  using \eqref{e-pro}  shows that
$$
\dim(\mu_{\beta'})\geq 0.990857395851368 \qquad \mbox{ for all }\beta'\in [1.42404, 1.42406];
$$
in which we used Algorithm \ref{alg-3.8} (with  $B=[0,1]$, $L=28$, $\gamma=10.2\times 2^{-53}$)   to estimate $\mu_\beta(A)$ from above. Similarly, taking $\beta=1.42406$, $\delta=2\times 10^{-5}$ and $N=5$ in   Proposition \ref{pro-4.3} gives
$$
\dim(\mu_{\beta'})\geq 0.990863104536039 \qquad \mbox{ for all }\beta'\in [1.42406, 1.42408];
$$
In Tables \ref{tab:special1} and \ref{tab:special2}, we list our computational results for these (local) uniform lower bounds for those  $\beta$  near $ 1.42404$ or near $\beta_3$, respectively.

Now we are ready to prove Theorem \ref{thm-1.1}.

\begin{table}
\begin{center}
\small
\begin{tabular}{c|c|c|c|c|c}
$\beta$ &	Lower bound on $\dim(\mu_\beta)$  &	 $N$ & 	Iteration time 	& $\delta$	& Time Consumed\\
\hline
1.42404000 &	0.990857395851368 &	5 & 28 &   $ 2\times 10^{-5}$ &	46.2962464\\
1.42406000 &	0.990863104536039 &	5 &	28 &	$ 2\times 10^{-5}$ &	46.2679773\\
1.42408000 &	0.990865644424569 &	5 &	28 &	$ 2\times 10^{-5}$ &	45.9752501\\
1.42410000 &	0.990869982602642 &	5 &	28 &	$ 2\times 10^{-5}$ &	46.6695526\\
1.42412000 &	0.990871200514580 &	5 &	28 &	$ 2\times 10^{-5}$ &	45.4384355\\
1.42414000 &	0.990876033055317 &	5 &	28 &	$ 2\times 10^{-5}$ &	45.3021147\\
1.42416000 &	0.990877731084033 &	5 &	28 & 	$ 2\times 10^{-5}$ &	44.4827468\\
1.42418000 &	0.990883216572977 &	5 &	28 &	$ 2\times 10^{-5}$ &	44.6157998\\
1.42420000 &	0.990884812785932 &	5 &	28 &	$ 2\times 10^{-5}$ &	45.7283555\\
1.42422000 &	0.990887373414717 &	5 &	28 &	$ 2\times 10^{-5}$ &	46.1800343\\ \hline
\end{tabular}
\\[2ex]
\caption{Lower bounds on $\dim (\mu_\beta)$  when $\beta$ is near $1.42404$; the unit for time consumption is in seconds.}
\label{tab:special1}
\end{center}
\end{table}

\begin{table}
\begin{center}
\small
\begin{tabular}{c|c|c|c|c|c}
$\beta$ &	Lower bound on $\dim(\mu_\beta)$  &	 $N$ & 	Iteration time	& $\delta$	& Time Consumed \\
\hline
1.8392867549 &	0.980408601080113 &	13 &	40	& $10^{-10}$ &	7.8168833 \\
1.8392867550 &	0.980408591972940 &	13	& 40	& $10^{-10}$	& 8.261404 \\
1.8392867551 &	0.980408585976581 & 13	& 40	& $10^{-10}$	& 8.1949713 \\
1.8392867552 &	0.980408570326141 &	13	& 40	& $10^{-10}$	& 8.5678704\\
1.8392867553 &	0.980408569070358 &	13	& 40	& $10^{-10}$	& 8.3962353\\
1.8392867554 &	0.980408579672517 &	13	& 40	& $10^{-10}$	& 8.3294095\\
1.8392867555 &	0.980408593496653 &	13	& 40	& $10^{-10}$	& 8.524099\\
1.8392867556 &	0.980408601403820 &	13	& 40	& $10^{-10}$	& 9.1053152\\
1.8392867557 &	0.980408603444209 &	13	& 40	& $10^{-10}$	& 7.9313189\\
1.8392867558 &	0.980408612802920 &	13	& 40	& $10^{-10}$	& 8.6475025\\
\hline
\end{tabular}
\\[2ex]
\caption{Lower bounds on $\dim(\mu_\beta)$  for those $\beta$ near $\beta_3\approx 1.839286755214161$.}
\label{tab:special2}
\end{center}
\end{table}

\begin{proof}[Proof of Theorem \ref{thm-1.1}]
By Lemma \ref{lem-4.1}(i), it suffices to show that $
\dim(\mu_\beta)\geq 0.98040856
$
for all $\beta\in [\sqrt{2},2]$, and $\dim(\mu_\beta)>\dim(\mu_{\beta_3})$ if  $$\beta\in [\sqrt{2},2]\backslash (\beta_3-10^{-8},  \; \beta_3+10^{-8}).$$

Since $\dimmubeta\geq 0.98041$ for $\beta\in [\sqrt{2}, 1.42404]$ (see Lemma \ref{lem-4.1}(ii)), we only need to consider the parameters $\beta$ in the interval $[1.42404, 2]$.  To achieve our results, we further partition  this interval into 132530 tiny intervals and use the algorithm developed in Proposition \ref{pro-4.3} to compute the (local) uniform lower bound of $\dim(\mu_\beta)$ on each of these tiny intervals. In Table \ref{tab:UBparameters}, we give the precise information about our partition, as well as the choices of $N$, $\delta$, and the iteration time $L$ (used for the estimations of $\mu_\beta$ using Algorithm \ref{alg-3.8}, in which we take $B=[0,1]$ and $\gamma=10.2\times 2^{-53}$) for each of these tiny intervals.  For instance, the second line in Table \ref{tab:UBparameters} means that we partition the interval $[1.42404, 1.44]$ into sub-intervals of length $2\times 10^{-5}$, and for each such subinterval we apply Proposition \ref{pro-4.3} to calculate the corresponding lower bound in which we take $N=5$, $\delta=2\times 10^{-5}$ and 28 as the iteration time.

The full computational result on the (local) uniform lower bounds associated to  these 132530 intervals  is available at \href{https://github.com/zfengg/DimEstimate}{https://github.com/zfengg/DimEstimate}. A graphic illustration of this result is given in Figure \ref{tab:graph}.

According to this computational result, the smallest lower bound that we obtained is $0.980408569070358$, which is the uniform lower bound associated to  the subinterval $$[1.8392867553, 1.8392867554];$$
moreover, $\dim (\mu_\beta)>0.9804094>\dim(\mu_{\beta_3})$ for all $$
\beta\in [1.42404, 2]\backslash [1.8392867490,   1.8392867616].$$
That is enough to conclude Theorem \ref{thm-1.1}.
\end{proof}

\begin{table}[htbp]
	\centering
	\begin{tabular}{ll|c|c|c}
		BetaStart & BetaEnd    & $N$ & Iteration times & BetaStep $\delta$ \\ \hline
		1.42404   &   1.43998    &       5        &      28       &  2E-05   \\
		   1.44     &   1.45998    &       5        &      28       &  2E-05   \\
		   1.46     &   1.49998    &       5        &      28       &  2E-05   \\
		    1.5     &   1.68999    &       5        &      30       &  1E-05   \\
		   1.69     &   1.77999    &       6        &      30       &  1E-05   \\
		   1.78     &   1.799999   &       7        &      40       &  1E-06   \\
		    1.8     &   1.839199   &       7        &      40       &  1E-06   \\
		  1.8392    &  1.8392599   &       7        &      40       &  1E-07   \\
		  1.83926   &  1.83927399  &       7        &      40       &  1E-08   \\
		 1.839274   &  1.8392863   &       10       &      40       &  1E-08   \\
		1.83928631  & 1.839286579  &       10       &      40       &  1E-09   \\
		1.83928658  & 1.8392869339 &       13       &      40       &  1E-10   \\
		1.839286934 & 1.839287249  &       10       &      40       &  1E-09   \\
		1.83928725  &  1.83929899  &       10       &      40       &  1E-08   \\
		 1.839299   &  1.83930999  &       7        &      40       &  1E-08   \\
		  1.83931   &  1.8399999   &       7        &      40       &  1E-07   \\
		   1.84     &   1.849999   &       5        &      30       &  1E-06   \\
		   1.85     &   1.99999    &       5        &      30       &  1E-05\\
\hline
	\end{tabular}
\\[2ex]

	\caption{Partition of $[1.42404,2]$ and the corresponding $N$, $\delta$ and iteration times.}
	\label{tab:UBparameters}
\end{table}

\begin{figure}
 \begin{center}
\includegraphics[width=5in]{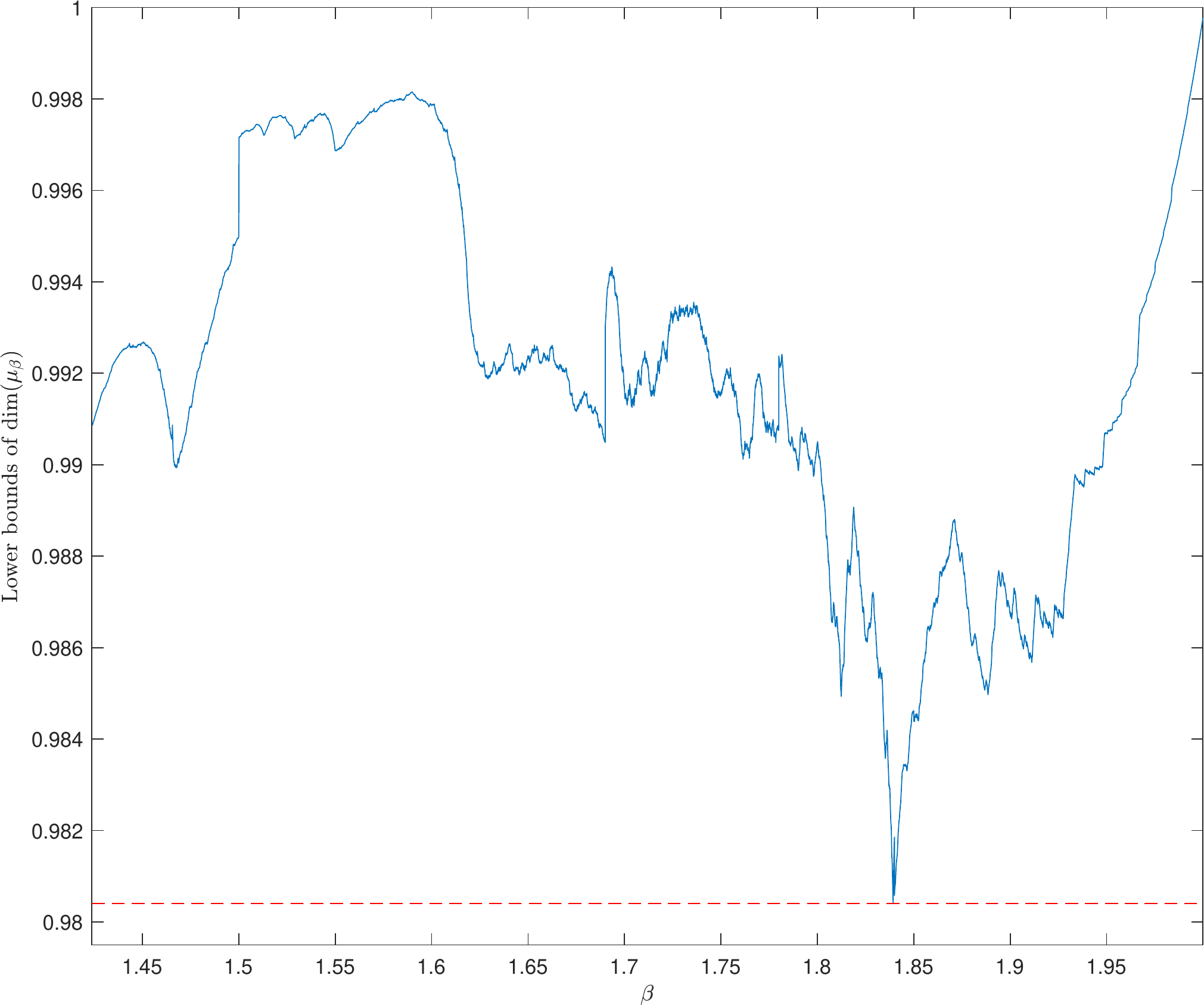}
\par\vspace{0pt}
\end{center}
\caption{A graphic illustration of our computational result on the (local) uniform lower bounds on $\dim(\mu_\beta)$. }
	\label{tab:graph}
\end{figure}

\section{Other theoretical results on Bernoulli convolutions}
\label{S-6}

For $ n=2,3,\ldots $, let $ \beta_{n} $ be the largest root of the polynomial $x^{n}-x^{n-1}-x^{n-2}-\cdots-1 $.
The first result of this section is the following.

\begin{proposition}
\label{pro-6.1}
\begin{itemize}
\item[(i)]For every $ \beta \in (1,2) $,
	 $\dimmubeta\geq \frac{\log
		2}{\log\beta}\cdot\mu_{\beta}\left([0,\beta-1]\right)$.
\item[(ii)]For each integer $n\geq 2$,
\begin{align}
\mu_{\beta_{n}}\left([0,\beta_{n}-1]\right)& =\frac{2^{n}-2}{2^{n}-1}\quad  \mbox{ and } \label{e-6.1}\\
\mu_{\beta}\left([0,\beta-1]\right) &\geq \frac{2^{n}-2}{2^{n}-1} \quad \mbox{ for  } \beta\in [\beta_n,\; 2). \label{e-6.2}
\end{align}
Consequently, $\displaystyle\dim(\mu_\beta)\geq \frac{2^{n}-2}{2^{n}-1}\cdot \frac{\log 2}{\log \beta_{n+1}}$ for $\beta\in [\beta_n,\beta_{n+1}]$, $n=2,3,\ldots.$
\end{itemize}
\end{proposition}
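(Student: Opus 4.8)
The plan is to prove parts (i) and (ii) in turn, using the lower-bound machinery of Theorem~\ref{thm-1.0} together with the self-similarity of $\mu_\beta$.

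For part (i), I would apply Theorem~\ref{thm-1.0}(i) with the crudest possible nontrivial partition of $\R$, namely $\huaD=\{D_0, D_1\}$ where $D_0=[0,\beta-1]$ is the overlap region $S_{1,\beta}([0,1])\cap S_{2,\beta}([0,1])$ and $D_1=\R\setminus D_0$. The point is that for the complement $D_1$, one checks from the IFS geometry that $S_{1,\beta}^{-1}(D_1)$ and $S_{2,\beta}^{-1}(D_1)$ are disjoint (each of the two cylinder images $S_{i,\beta}([0,1])$ meets $D_1$ in an interval lying on only one side), so that $\tfrac12\mu_\beta(S_{1,\beta}^{-1}D_1)+\tfrac12\mu_\beta(S_{2,\beta}^{-1}D_1)=\mu_\beta(D_1)$ while the contribution $f(\tfrac12\mu_\beta(S_1^{-1}D_1),\tfrac12\mu_\beta(S_2^{-1}D_1))$ is bounded above by $\varphi(\mu_\beta(D_1))$ in the best case; more carefully one should track both $D_0$ and $D_1$ and use $f(a,b)=\varphi(a)+\varphi(b)-\varphi(a+b)$ together with $\mu_\beta(D_0)+\mu_\beta(D_1)=1$. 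Writing $s=\mu_\beta([0,\beta-1])$, a direct computation of $\sum_{D\in\huaD}f(\cdot,\cdot)$ should collapse to $\log 2-s\log 2$ (the overlap of "mass $s$" costs exactly $s\log 2$ of entropy, the rest is free), and plugging into \eqref{e-p1.1} with $\sum -p_i\log\rho_i=\log\beta$ gives $\dim(\mu_\beta)\ge \frac{\log 2 - (\log 2 - s\log 2)}{\log\beta}=\frac{s\log 2}{\log\beta}$, which is exactly (i). I expect the bookkeeping of which preimages land where to be the only subtle point here.

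For part (ii), the equality \eqref{e-6.1} should be obtained by exploiting that for $\beta=\beta_n$ the relation $\beta_n^n=\beta_n^{n-1}+\cdots+\beta_n+1$ makes the overlap structure of $\mu_{\beta_n}$ exactly renormalizable: iterating the self-similarity relation $\mu_\beta=\tfrac12\mu_\beta\circ S_{1,\beta}^{-1}+\tfrac12\mu_\beta\circ S_{2,\beta}^{-1}$ a suitable number of times and using that $\mu_{\beta_n}$ is supported on $[0,1]$ with no atoms, one derives a linear equation for $t:=\mu_{\beta_n}([0,\beta_n-1])$ whose solution is $\tfrac{2^n-2}{2^n-1}$; the natural approach is to write $[0,\beta_n-1]$ (and its relevant sub/superlevel intervals under the maps $S_{I,\beta_n}$, $I\in\{1,2\}^k$) in terms of the self-similar copies and set up a finite system, reminiscent of the bare-hand computation in Example~\ref{exam-4.1}. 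For the inequality \eqref{e-6.2}, I would argue that $\beta\mapsto \mu_\beta([0,\beta-1])$ is monotone (or at least $\ge \tfrac{2^n-2}{2^n-1}$) on $[\beta_n,2)$: as $\beta$ increases past $\beta_n$ the overlap interval $[0,\beta-1]$ grows, and one can compare masses via a coding argument as in Lemma~\ref{lem-5.4}, or more robustly iterate the self-similarity inequality to show that the mass of the "large" overlap region only increases; this monotonicity/comparison step is the main obstacle in (ii).

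Finally, the consequence follows by combining (i), (ii), and Lemma~\ref{lem-4.1}(i): for $\beta\in[\beta_n,\beta_{n+1}]$ we have $\mu_\beta([0,\beta-1])\ge \tfrac{2^n-2}{2^n-1}$ by \eqref{e-6.2}, hence by (i)
\[
\dim(\mu_\beta)\ge \frac{2^n-2}{2^n-1}\cdot\frac{\log 2}{\log\beta}\ge \frac{2^n-2}{2^n-1}\cdot\frac{\log 2}{\log\beta_{n+1}},
\]
using $\beta\le\beta_{n+1}$. This is immediate once (i) and (ii) are in hand, so no further obstacle arises.
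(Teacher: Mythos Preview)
Your plan for part~(i) has a genuine gap. First, $[0,\beta-1]$ is \emph{not} the overlap region $S_{1,\beta}([0,1])\cap S_{2,\beta}([0,1])$; that overlap is $[1-\beta^{-1},\beta^{-1}]$. More importantly, even after fixing this, a two-element partition $\{\text{overlap},\text{complement}\}$ gives nothing: the complement $D_1=[0,1-\beta^{-1})\cup(\beta^{-1},1]$ is disconnected, and one computes $\mu_\beta(S_{1,\beta}^{-1}D_1)=\mu_\beta([0,\beta-1))$ and $\mu_\beta(S_{2,\beta}^{-1}D_1)=\mu_\beta((2-\beta,1])$, which are both positive (and equal by symmetry). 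Hence $f$ does \emph{not} vanish on $D_1$; in fact the total $\sum_{D\in\huaD}f(\cdot,\cdot)$ equals $\log 2$ and the bound collapses to $\dim(\mu_\beta)\ge 0$. The paper instead takes the \emph{three}-piece partition
\[
\huaD=\bigl\{[0,1-\beta^{-1}),\,[1-\beta^{-1},\beta^{-1}],\,(\beta^{-1},1]\bigr\},
\]
so that each of the two outer intervals has exactly one preimage $S_{i,\beta}^{-1}D$ of zero $\mu_\beta$-measure, forcing $f=0$ there. Only the middle interval contributes, and using the symmetry $\mu_\beta([\beta-1,1])=\mu_\beta([0,2-\beta])$ one gets $f(\tfrac12 a,\tfrac12 a)=a\log 2$ with $a=\mu_\beta([\beta-1,1])=1-\mu_\beta([0,\beta-1])$, which yields~(i).

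For part~(ii), your plan for \eqref{e-6.1} is essentially the paper's: iterate self-similarity using the identity $\beta_n-1=1-\beta_n^{-n}$ to get a closed linear system. For \eqref{e-6.2}, however, the monotonicity route via Lemma~\ref{lem-5.4} is doubtful: both the interval $[0,\beta-1]$ and the measure $\mu_\beta$ vary with $\beta$, and Lemma~\ref{lem-5.4} only compares $\mu_\beta$-masses of nearby \emph{fixed} intervals. The paper does not attempt monotonicity in $\beta$; instead, for each fixed $\beta\ge\beta_n$ it proves the algebraic inequalities $\beta(\beta^{n-1}-\cdots-1)\ge 1$ and $\beta(\beta-1)(\beta^i-\cdots-1)\ge 1$ for $1\le i\le n-2$, then iterates the self-similarity relation exactly as in the equality case, obtaining a chain of inequalities whose solution gives $\mu_\beta([0,\beta-1])\ge(2^n-2)/(2^n-1)$. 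Your fallback phrase ``iterate the self-similarity inequality'' points in this direction but misses that the key input is these polynomial inequalities in $\beta$. Your derivation of the ``Consequently'' clause is fine (Lemma~\ref{lem-4.1}(i) is not needed there).
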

\begin{proof}
We first prove (i). Fix $\beta\in (1,2)$. Let $$\huaD=\lbrace[0,1-\beta^{-1}), [1-\beta^{-1},\beta^{-1}] , (\beta^{-1}, 1 ] \rbrace, $$ which is a finite Borel  partition of $ \unitinterval $. Applying Theorem \ref{thm-1.1}(i) to the IFS $$\{S_{1,\beta}(x)=\beta^{-1}x, \; S_{2,\beta}(x)=\beta^{-1}x+1-\beta^{-1}\}$$ and the probability weight $(1/2,1/2)$,
\begin{equation}
\label{e-6.3}
\dim(\mu_\beta)\geq \frac{\log 2 -\sum_{D\in \huaD}f\left(\frac{1}{2}\mu_\beta(S_{1,\beta}^{-1}D), \; \frac{1}{2}\mu_\beta(S_{2,\beta}^{-1}D)\right) }{\log \beta}.
\end{equation}
It is easily checked that $f(x,y)=0$ if one of $x$ and $y$ equals $0$, and $f(x,x)=2x\log 2$.  Meanwhile if $D=[0,1-\beta^{-1})$ or $(\beta^{-1}, 1 ]$, then one of $S_{1,\beta}^{-1}D$ and $S_{2,\beta}^{-1}D$ has no intersections with $[0,1]$, so has zero $\mu_\beta$ measure. It follows that
\begin{align*}
\sum_{D\in \huaD}&f\left(\frac{1}{2}\mu_\beta(S_{1,\beta}^{-1}D),\; \frac{1}{2}\mu_\beta(S_{2,\beta}^{-1}D)\right)\\
&=f\left(\frac{1}{2}\mu_\beta(S_{1,\beta}^{-1}[1-\beta^{-1},\beta^{-1}]), \; \frac{1}{2}\mu_\beta(S_{2,\beta}^{-1}[1-\beta^{-1},\beta^{-1}])\right)\\
&=f\left(\frac{1}{2}\mu_\beta([\beta-1,1]), \; \frac{1}{2}\mu_\beta([0, 2-\beta])\right)\\
&=\mu_\beta([\beta-1,1])\log 2,
\end{align*}
where in the last equality we use the property that $\mu_\beta([\beta-1,1])=\mu_\beta([0, 2-\beta])$, which follows from the symmetry of $\mu_\beta$ (i.e., $\mu_\beta([0,x])=\mu_\beta([1-x,1])$ for all $x\in [0,1]$).  Combining the above equality with \eqref{e-6.3} yields the desired inequality in (i).

Next we prove \eqref{e-6.1}. Fix an integer $n\geq 2$ and write $\rho:=\beta^{-1}_n$. We claim that
\begin{align}
\mu_{\beta_n}([0, 1-\rho])&=\frac{1}{2} \mu_{\beta_n}([0,\rho^{-1}-1])=\frac{1}{2} \mu_{\beta_n}([0,1-\rho^{n}]),\;\mbox{ and}\label{e-6.4}\\
\mu_{\beta_n}([0, 1-\rho^k])&=\frac{1}{2}+\frac{1}{2} \mu_{\beta_n}([0,1-\rho^{k-1}]),\quad k=2,\ldots, n. \label{e-6.5}
\end{align}
To see \eqref{e-6.4}, by the self-similarity of $\mu_{\beta_n}$,
$$
\mu_{\beta_n}([0, 1-\rho])=\frac{1}{2} \mu_{\beta_n}([0,\rho^{-1}-1])+\frac{1}{2} \mu_{\beta_n}([1-\rho^{-1}, 0])=\frac{1}{2} \mu_{\beta_n}([0,\rho^{-1}-1]).
$$
This proves \eqref{e-6.4}, since $\rho^{-1}-1=1-\rho^n$. Similarly for $2\leq k\leq n$,
\begin{align*}
\mu_{\beta_n}([0, 1-\rho^k])&=\frac{1}{2} \mu_{\beta_n}([0,\rho^{-1}-\rho^{k-1}])+\frac{1}{2} \mu_{\beta_n}([1-\rho^{-1},1-\rho^{k-1}])\\
&=\frac{1}{2} +\frac{1}{2} \mu_{\beta_n}([0,1-\rho^{k-1}]),
\end{align*}
where in the second equality we use the fact that $$\rho^{-1}-\rho^{k-1}=(1+\rho+\cdots+\rho^{n-1})-\rho^{k-1}\geq 1.$$
This proves \eqref{e-6.5}.
Solving the linear equations in \eqref{e-6.4}--\eqref{e-6.5} gives $$ \mu_{\beta_n}([0,\rho^{-1}-1])=\mu_{\beta_n}([0,1-\rho^n])=\frac{2^n-2}{2^n-1},$$
which proves \eqref{e-6.1}.

Finally we prove \eqref{e-6.2}.  Fix an integer $n\geq 2$ and $\beta\in [\beta_n,2)$.  We first show that
\begin{equation}
\label{e-6.7}
\beta (\beta^{n-1}-\beta^{n-2}-\cdots-1)\geq 1,
\end{equation}
and
\begin{equation}
\label{e-6.6}
\beta (\beta-1)(\beta^{i}-\beta^{i-1}-\cdots-1) \geq 1\; \mbox{ for }  i=1,\ldots, n-2, \mbox{ provided that }n\geq 3,
\end{equation}
To prove the above inequalities, notice that
\begin{equation*}
\beta\geq \beta_n=1+\beta_n^{-1}+\cdots+\beta_n^{-(n-1)}\geq 1+\beta^{-1}+\cdots+\beta^{-(n-1)}.
\end{equation*}
Hence
$$
\beta^{n-1}-\beta^{n-2}-\dots-1\geq \beta^{n-2}(1+\beta^{-1}+\cdots+\beta^{-(n-1)})-\beta^{n-2}-\cdots-1=\beta^{-1},
$$
from which  \eqref{e-6.7} follows. Moreover, for $i=1,\ldots, n-2$ (provided that $n\geq 3$),
\begin{align*}
\beta^i-\beta^{i-1}-\cdots-1&\geq \beta^{i-1}(1+\beta^{-1}+\cdots+\beta^{-(n-1)})-\beta^{i-1}-\cdots-1\\
&=\beta^{-1}+\cdots+\beta^{i-n}\\
&\geq \beta^{-1}+\beta^{-2},
\end{align*}
so $$\beta (\beta-1)(\beta^{i}-\beta^{i-1}-\cdots-1)\geq \beta (\beta-1)(\beta^{-1}+\beta^{-2})=\beta-\beta^{-1}\geq 1.$$
This proves \eqref{e-6.6}.

By the self-similarity of $\mu_\beta$,
\begin{equation}
\label{e-6.8}
\begin{split}
\mu_\beta([0, \beta -1])&= \frac{1}{2}\mu_\beta([0, \beta^2-\beta]) +\frac{1}{2}\mu_\beta([0, (\beta-1)(\beta-1)]) \\
&= \frac{1}{2} +\frac{1}{2}\mu_\beta([0, (\beta-1)(\beta-1)]),
\end{split}
\end{equation}
where in the second equality we use the fact that $\beta^2-\beta\geq 1$.
Similarly,
	\begin{equation}
	\label{e-6.9}
     	\begin{split}
     	\mu_\beta&([0, (\beta -1) (\beta^{n-1}-\beta^{n-2}-\cdots-1)])\\
	&\geq \frac{1}{2} \mu_\beta([0, \beta(\beta -1) (\beta^{n-1}-\beta^{n-2}-\cdots-1)]) \\
     	&\geq \frac{1}{2}\mu_\beta([0,\beta -1] )\qquad (\mbox{by \eqref{e-6.7}}).
     	\end{split}
     	\end{equation}	
Moreover,
for $i=1,\ldots, n-2$ (provided $n\geq 3$),
 \begin{equation}
 \label{e-6.10}
      	\begin{split}
	&\mu_\beta([0, (\beta -1) (\beta^i-\beta^{i-1}-\cdots-1)])\\
	&= \frac{1}{2}\mu_\beta([0, \beta(\beta -1) (\beta^i-\beta^{i-1}-\cdots-1)])+\frac{1}{2}\mu_\beta([0, (\beta -1) (\beta^{i+1}-\beta^{i}-\cdots-1)]) \\
&= \frac{1}{2} +\frac{1}{2}\mu_\beta([0, (\beta -1) (\beta^{i+1}-\beta^{i}-\cdots-1)])\qquad (\mbox{by \eqref{e-6.6}}).
     	\end{split}
     	\end{equation}

To complete the proof of \eqref{e-6.2}, we consider the cases $n=2$ and $n\geq 3$ separately.  First assume that $n=2$. Then by \eqref{e-6.8}--\eqref{e-6.9},
$$
\mu_\beta([0, \beta -1])=\frac{1}{2} +\frac{1}{2}\mu_\beta([0, (\beta-1)^2])\quad \mbox{ and } \quad
\mu_\beta ([0, (\beta -1)^2])\geq \frac{1}{2}\mu_\beta([0, \beta -1]),
$$
from which we deduce that $\mu_\beta([0, \beta -1])\geq 2/3$. This proves \eqref{e-6.2} in the case that $n=2$.

In what follows we assume $n\geq 3$.  Set $x=\mu_\beta([0, \beta-1])$ and
$$
y_i= \mu_\beta([0, (\beta -1) (\beta^i-\beta^{i-1}-\cdots-1)])\quad \mbox{ for }  i=1,\ldots, n-1.$$
By  \eqref{e-6.8} and \eqref{e-6.10},
$$
x=\frac{1}{2}+\frac{1}{2}y_1\quad \mbox{ and }\quad  y_i=\frac{1}{2}+\frac{1}{2}y_{i+1},\quad i=1,\ldots, n-2,
$$
from which we deduce that $y_{n-1}=2^{n-1}x -\left(2^{n-1}-1\right)$. Meanwhile by \eqref{e-6.9}, $y_{n-1}\geq x/2$. Hence
$2^{n-1}x -\left(2^{n-1}-1\right)\geq x/2$, from which we obtain
$$
\mu_\beta([0, \beta-1])=x\geq \frac{2^{n}-2}{2^{n}-1}.
$$
This completes the proof of \eqref{e-6.2}.
\end{proof}

\begin{remark}
\label{rem-6.2}
It is known  that $\beta_n=2-2^{-n}+O(\frac{n}{4^n})$ (see \cite[Lemma 3.3]{FengWang2004}). Applying this fact and Proposition \ref{pro-6.1}, it is easy to show that there exists  a constant $c>0$ such that
$$1-\dim \mu_\beta\leq c(2-\beta) \mbox{  for all }\beta\in (1,2).$$  This plays a complement to the following inequality obtained in \cite[Theorem 3] {KPV2021}:
$$1-\dim \mu_\beta\leq c(\beta-1) \mbox{ for some  } c>0 \mbox{ and all } \beta\in (1,2).$$
Meanwhile, according to the theoretic formula of $\dim (\mu_{\beta_n})$ (see \cite{Feng1999, Feng2005, Grabner2002}), one can check that
$$
\dim (\mu_{\beta_n})=1-\left(1-\frac{1}{\log 4}\right)2^{-n}+O\left(\frac{n}{4^n}\right),
$$
which implies that $$\lim_{n\to \infty} \frac{1-\dim (\mu_{\beta_n})}{2-\beta_n}=1-\frac{1}{\log 4}.$$
\end{remark}
\medskip

 In the remaining part of this section, we prove  the following.
 \begin{lem}
 \label{lem-6.3}
 There exists $\beta_{\ast} \in (\sqrt{2},2) $ such that
	\[ \dimmu{\beta_{\ast}} = \underset{\beta\in(1,2]}{\inf} \dimmubeta.\]
 \end{lem}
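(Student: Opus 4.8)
The plan is to show that $\beta\mapsto\dim(\mu_\beta)$ is lower semicontinuous on the compact interval $[\sqrt 2,2]$, conclude by compactness that it attains its infimum there at some $\beta_*$, rule out the two endpoints, and then invoke the ``power trick'' of Lemma~\ref{lem-4.1}(i) to identify this infimum with $\inf_{\beta\in(1,2]}\dim(\mu_\beta)$.

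The key step is the lower semicontinuity. By Theorem~\ref{thm-2.1} applied to the IFS $\{S_{1,\beta},S_{2,\beta}\}$ with weights $(1/2,1/2)$, for every $\beta\in(1,2)$ one has
\[
\dim(\mu_\beta)=\frac{\log 2-H_m\!\left(\calP\mid\pi_\beta^{-1}\calB(\R)\right)}{\log\beta},
\]
where $m=\prod_{1}^{\infty}\{1/2,1/2\}$ and $\pi_\beta$ is the coding map of \eqref{e-4.4}. For each $n$ let $\huaD_n$ be the partition of $[0,1]$ into the $2^n$ dyadic intervals of length $2^{-n}$ (the $\mu_\beta$-null endpoint $1$ being adjoined to the last interval); then $\huaD_{n+1}$ refines $\huaD_n$ and $\diam(\huaD_n)\to 0$, so by Lemma~\ref{prop2.1} together with the monotonicity of conditional entropy under refinement (\cite{Walters1982}), the sequence $n\mapsto H_m(\calP\mid\pi_\beta^{-1}\huaD_n)$ is non-increasing with infimum $H_m(\calP\mid\pi_\beta^{-1}\calB(\R))$. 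It therefore suffices to check that each $\beta\mapsto H_m(\calP\mid\pi_\beta^{-1}\huaD_n)$ is continuous on $(1,2)$, since a pointwise infimum of continuous functions is upper semicontinuous and $\beta\mapsto\log\beta$ is positive and continuous. By Lemma~\ref{lem-3.5}, $H_m(\calP\mid\pi_\beta^{-1}\huaD_n)=\sum_{[a,b)\in\huaD_n}f\bigl(\tfrac12\mu_\beta(S_{1,\beta}^{-1}[a,b)),\,\tfrac12\mu_\beta(S_{2,\beta}^{-1}[a,b))\bigr)$, where $f$ is continuous on $\R_+^2$ by Lemma~\ref{lmm2.3}; the sets $S_{i,\beta}^{-1}[a,b)$ are intervals with endpoints depending affinely on $\beta$, the family $\beta\mapsto\mu_\beta$ is weakly continuous (immediate from \eqref{e-4.4}, which presents $\pi_\beta$ as a power series in $\beta^{-1}$ converging uniformly on $\{1,2\}^{\N}$), and each $\mu_\beta$ with $\beta\in(1,2)$ is non-atomic (a standard argument: an atom of maximal mass would force, via self-similarity, infinitely many distinct atoms of that mass). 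Sandwiching $S_{i,\beta}^{-1}[a,b)$ between fixed slightly larger and smaller intervals and shrinking the gap then gives continuity of $\beta\mapsto\mu_\beta(S_{i,\beta}^{-1}[a,b))$, hence lower semicontinuity of $\dim(\mu_\beta)$ on $(1,2)$. Finally $\dim(\mu_\beta)\to 1$ as $\beta\to 2^-$ by Proposition~\ref{pro-6.1}(ii) (using $\beta_n\to 2$ and $(2^n-2)/(2^n-1)\to 1$), so the function is in fact lower semicontinuous on all of $[\sqrt 2,2]$.

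Since $[\sqrt 2,2]$ is compact, $\dim(\mu_\beta)$ attains its infimum over $[\sqrt 2,2]$ at some $\beta_*$. As $\beta_3$ is Pisot, $\inf_{\beta\in[\sqrt 2,2]}\dim(\mu_\beta)\le\dim(\mu_{\beta_3})<1$ by \eqref{e-1.0'} (or \cite{Garsia1963}); on the other hand $\dim(\mu_2)=1$ (open set condition), and Lemma~\ref{lem-4.1}(i) with $k=2$ gives $\dim(\mu_{\sqrt 2})\ge\dim(\mu_2)=1$, so $\dim(\mu_{\sqrt 2})=1$ as well. Hence $\beta_*\notin\{\sqrt 2,2\}$, i.e.\ $\beta_*\in(\sqrt 2,2)$.

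It remains to see that $\dim(\mu_{\beta_*})=\inf_{\beta\in(1,2]}\dim(\mu_\beta)$, i.e.\ that no smaller value is attained on $(1,\sqrt 2)$. Given $\beta\in(1,\sqrt 2)$, let $k\ge 2$ be the least integer with $\beta^k\ge\sqrt 2$; then $\beta^k=\beta\cdot\beta^{k-1}<\sqrt 2\cdot\sqrt 2=2$, so $\beta^k\in[\sqrt 2,2]$, and Lemma~\ref{lem-4.1}(i) yields $\dim(\mu_\beta)\ge\dim(\mu_{\beta^k})\ge\dim(\mu_{\beta_*})$. This gives the claim. The one step I expect to require genuine care is the lower semicontinuity of $\beta\mapsto\dim(\mu_\beta)$; granted the projection-entropy identity of Theorem~\ref{thm-2.1} and the monotone approximation of Lemma~\ref{prop2.1}, it reduces to the weak continuity and non-atomicity of the Bernoulli convolutions and the continuity of $f$, which are standard.
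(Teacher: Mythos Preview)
Your proof is correct and follows the same overall architecture as the paper's: reduce the infimum from $(1,2]$ to $[\sqrt 2,2]$ via the power trick of Lemma~\ref{lem-4.1}(i), invoke lower semicontinuity on the compact interval to get a minimiser $\beta_*$, and rule out the endpoints using $\dim(\mu_2)=\dim(\mu_{\sqrt 2})=1$ together with $\dim(\mu_{\beta_3})<1$.

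The genuine difference lies in how you establish lower semicontinuity. The paper simply cites the result of Hochman and Shmerkin \cite{HochmanShmerkin2012}. You instead derive it internally from the projection-entropy machinery already developed in the paper: writing $\dim(\mu_\beta)=(\log 2-H_m(\calP\mid\pi_\beta^{-1}\calB(\R)))/\log\beta$ via Theorem~\ref{thm-2.1}, expressing the conditional entropy as a decreasing limit of $H_m(\calP\mid\pi_\beta^{-1}\huaD_n)$ over refining dyadic partitions via Lemma~\ref{prop2.1}, and checking that each of these finite-partition conditional entropies is continuous in $\beta$ through Lemma~\ref{lem-3.5}, the weak continuity of $\beta\mapsto\mu_\beta$, and the non-atomicity of $\mu_\beta$. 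This is a more self-contained argument that avoids importing a substantial external theorem, at the cost of a few extra lines of verification; it also makes transparent \emph{why} lower semicontinuity holds in this setting, as an immediate byproduct of the approximation scheme that underlies the paper's main algorithm. The paper's route is of course shorter on the page.
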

 \begin{proof}

For each $\beta\in (1,\sqrt{2})$, there exists a positive integer $k$ such that $\beta^k\in [\sqrt{2},2]$. By Lemma \ref{lem-4.1},  $\dim (\mu_\beta)\geq \dim(\mu_{\beta^k})$. It follows that $$\underset{\beta\in(1,2]}{\inf} \dimmubeta=\underset{\beta\in[\sqrt{2},2]}{\inf} \dimmubeta.$$

It is known that the mapping $\beta \mapsto \dim(\mu_\beta)$ is lower semi-continuous on $(1,2]$ (see e.g.~\cite[Theorem~1.8]{HochmanShmerkin2012}).  Hence there exists  $\beta_*\in [\sqrt{2},2]$ so that
$$
\dimmu{\beta_{\ast}} = \underset{\beta\in [\sqrt{2},2]}{\inf} \dimmubeta.
$$
Notice that $\dim(\mu_\beta)=1$ if $\beta=2$. Combining  this with the inequality $\dim(\mu_\beta)\geq \dim (\mu_{\beta^2})$ (cf. Lemma \ref{lem-4.1}(i)) yields that
 $\dim(\mu_{\sqrt{2}})=1$. Therefore $\beta_*\not\in \{\sqrt{2},2\}$. This proves the lemma.
\end{proof}

\section{Upper bound estimate for the dimension of Bernoulli convolutions associated with Pisot numbers}\label{S-7}

In this section we prove Theorem \ref{thm-1.4} and give our computational results on the upper and lower bounds on $\dim(\mu_\beta)$ for some examples of Pisot numbers $\beta$ of degree $3$ or $4$.

We begin with the proof of Theorem \ref{thm-1.4}.
\begin{proof}[Proof of Theorem \ref{thm-1.4}]

Let $P(q)$, $q>0$, be defined as in \eqref{e-P}. It is proved in \cite[Theorems 3.3--3.4]{FengLau2002} that $P$ is differentiable on $(0,\infty)$ and for each $q>0$, there is a unique $\sigma$-invariant measure $\eta_q$ on $\Sigma$ satisfying the following Gibbs property
$$
\eta_q([i_1\cdots i_n])\approx e^{-nP(q)} \|A_{i_1}\cdots A_{i_n}\|^q\quad \mbox{  for }n\in \N \mbox{ and }i_1\cdots i_n\in \{1,\ldots, k\}^n,
$$
where $\|\cdot\|$ stands for the standard matrix norm.  Furthermore by \cite[Theorems 1.1, 1.2 and 3.1]{Feng2004}, $P$ satisfies the following variational relation
$$
P(q)=h_{\eta_q}(\sigma)+q\lim_{n\to \infty}\frac1n\int \log \|A_{x|n}\| \; d\eta_q(x),\qquad q>0
$$
and the derivative formula
$$
P'(q)=\lim_{n\to \infty}\frac1n\int \log \|A_{x|n}\| \; d\eta_q(x),\qquad q>0,
$$
where $A_{x|n}:=A_{x_1}\cdots A_{x_n}$ for $x=(x_n)_{n=1}^\infty\in \Sigma$.
Combining the above two equalities yields that $P(q)=h_{\eta_q}(\sigma)+qP'(q)$. Taking $q=1$ and using the fact that $P(1)=0$ we obtain $P'(1)=-h_{\eta_1}(\sigma)$. By the definition of $\eta$ (see \eqref{e-G}), $\eta$ satisfies the Gibbs property
$\eta([i_1\cdots i_n])\approx  \|A_{i_1}\cdots A_{i_n}\|$, so $\eta=\eta_1$. Thus  $P'(1)=-h_{\eta}(\sigma)$.  Combining it with \eqref{e-dim} yields $\dim(\mu_\beta)=h_{\eta}(\sigma)/(\log \beta)$.

To complete our proof, let $ \calP =\Braced{[i]:\; i=1,\ldots, k} $ be the partition of $\Sigma:=\{1,\ldots,k\}^\N $ consisting of the first order cylinders in $\Sigma$. Then
	\begin{equation}\nonumber
	\begin{split}
	u_{n} &:= \sum_{I\in \{1,\ldots,k\}^{n+1}}\varphi\left(\eta([I]) \right) -
	\sum_{J\in \{1,\ldots,k\}^{n}}\varphi\left(\eta([J]) \right)\\
	&= H_{\eta}\left(\calP\vee \cdots \vee \sigma^{-n} \calP\right) -
	H_{\eta}\left(\calP\vee \cdots \vee \sigma^{-(n-1)} \calP\right)\\
	&= H_{\eta}\left(\calP\vee \cdots \vee \sigma^{-n} \calP\right) -
	H_{\eta}\left(\sigma^{-1}\calP\vee \cdots \vee \sigma^{-n} \calP\right)\\
	&= H_{\eta}\left(\calP\vert\left(\sigma^{-1}\calP\vee \cdots \vee \sigma^{-n} \calP\right)\right),
	\end{split}
	\end{equation}
where we used \cite[Theorem 4.3(ii)]{Walters1982} in the last equality.  By \cite[Theorem 4.3(iii)]{Walters1982}, the sequence $(u_{n})$ is decreasing.  Now by definition,
	\begin{equation}\nonumber
	\begin{split}
	h_{\eta}(\sigma) & =\nliminf \frac{1}{n} H_{\eta}\left(\calP\vee \cdots \vee \sigma^{-(n-1)} \calP\right)\\
&= \nliminf \frac{1}{n} \sum_{\abs{I}=n} \varphi\left(\eta([I])\right)\\
	&= \nliminf  \frac{1}{n} (u_{n} + u_{n-1} + \cdots + u_{1})
	\end{split}
	\end{equation}
		Since $ (u_{n}) $ is decreasing, it follows that $h_{\eta}(\sigma) =\nliminf u_{n} = \underset{n}{\inf} \: u_{n}$.
\end{proof}

In the remaining part of this section  we give some computational results on the upper and lower bounds on $\dim(\mu_\beta)$ for some examples of Pisot numbers $\beta$ of degree $3$ or $4$.

We begin with the illustration of the computations for the example in which $\beta\approx 1.465571232$ is the largest root of the polynomial
$x^3-x^2-1=0$.  We use the inequality \eqref{e-p1.1} in Theorem \ref{thm-1.0}  to give a finite sequence of low bounds on $\dim(\mu_\beta)$, where $\huaD=\huaD_{N,\beta}$ is the partition of $[0,1]$ generated by the points in the set given in \eqref{e-s1.1}, whilst we use the inequality \eqref{e-1.7} in Theorem 1.4 to give a finite sequence of upper bounds on $\dim(\mu_\beta)$; in this example, there are $46$ constructed matrices $A_i$'s of dimension $d=346$.    In Table \ref{tab:special} we list these lower and upper bounds.

\begin{table}
\begin{center}
\tiny
\begin{tabular}{c|c|c|c|c|c}
Level $N$	& Low bound	& Time consumed	& Level	$n$ & Upper bound	& Time consumed\\
\hline
	10 &	0.999446403056440 &	0.054791699	& 6	& 0.999553838975762 & 0.466657771	\\
	11 &	0.999485559701407 &	0.019805691	& 7	& 0.999551745301729 & 0.077114112	\\
	12 &	0.999511565006248 &	0.018707125	& 8	& 0.999549324828679 & 0.098623572	\\
	13 &    0.999524092865024 &	0.031473356	& 9 &	0.999548108265720 &	0.099398655	\\
    14 &	0.999533483697397 &	0.027698097	& 10 &	0.999546974504613 &	0.122857504	\\
	15 &	0.999537083624124 &	0.035850279	& 11 &	0.999546321331208 & 0.138546403	\\
	16 &	0.999539945814714 &	0.054405927	& 12 &	0.999545837403620 &	0.198051248	\\
	17 &	0.999541706067639 &	0.063491041	& 13 &	0.999545467625853 &	0.247629358	\\
	18 &	0.999542557004638 &	0.071829385	& 14 &	0.999545258823762 &	0.312467417	\\
	19 &	0.999543459170884 &	1.607735407	& 15 &	0.999545072687759 &	0.584398307	\\
	20 &	0.999543735484899 &	0.186936405	& 16 &	0.999544971532786 &	0.618315478	\\
	21 &	0.999544108747012 &	0.208533991	& 17 &	0.999544888243190 &	1.015030255	\\
	22 &	0.999544301645519 &	0.243305524	& 18 &	0.999544831886938 &	1.195914636	\\
	23 &	0.999544402854973 &	0.355187983	& 19 &	0.999544797950427 &	1.611427142	\\
	24 &	0.999544527192754 &	0.563398845	& 20 &	0.999544767910898 &	2.332147467	\\
	25 &	0.999544565385905 &	3.173328553	& 21 &	0.999544751961036 &	3.621352815	\\
	26 &	0.999544619835200 &	1.163999234	& 22 &	0.999544738388558 &	5.046995913	\\
	27 &	0.999544645262069 &	1.653751317	& 23 &	0.999544729649307 &	8.976078225	\\
	28 &	0.999544663526862 &	2.458708739	& 24 &	0.999544723974222 &	10.85274788	\\
	29 &	0.999544681239203 &	3.52355423	& 25 &	0.999544719277577 &	15.76524517	\\
	30 &	0.999544687025939 &	5.013234467	& 26 &	0.999544716737165 &	22.61266586	\\
	31 &	0.999544695999151 &	7.36229044	& 27 &	0.999544714509664 &	33.20685747	\\
	32 &	0.999544699587091 &	10.73880172	& 28 &	0.999544713164084 &	48.45996338	\\
	33 &	0.999544702739358 &	15.77259785	& 29 &	0.999544712224285 &	71.26915581	\\
	34 &	0.999544705355331 &	23.01813448	& 30 &	0.999544711488671 &	104.4124192	\\
	35 &	0.999544706362411 &	34.01411608	& 31 &	0.999544711078597 &	152.6973032\\ \hline 	
\end{tabular}
\\[2ex]
\caption{Lower and upper  bounds on $\dim (\mu_\beta)$  where $\beta\approx 1.465571232$ is the largest root of
$x^3-x^2-1$; the unit for time consumption is in seconds.}
\label{tab:special}
\end{center}
\end{table}

In a similar way we  compute the lower and upper bounds on $\dim(\mu_\beta)$ for 4 other Pisot numbers of degree 3 or 4.  In Table \ref{tab:degree5}, we list the corresponding computational results briefly.

\begin{table}
\begin{center}
\small
\begin{tabular}{l|c|c|c}
	          Polynomial &   $\beta$   & lower bound on $\dim(\mu_\beta)$ & upper bound on $\dim(\mu_\beta)$ \\ \hline
	    $x^3 - x^2 -  1$ & 1.465571232 &        0.999544706362411         &        0.999544711078597         \\
	    $x^3 - x    - 1$ & 1.324717957 &        0.999995036655607         &        0.999995037372877         \\
	      $x^3-2x^2+x-1$ & 1.754877666 &        0.994020046394375         &        0.994020065372927         \\
	 $x^4- 2x^3 + x - 1$ & 1.866760399 &        0.991391363780141         &        0.991401387766015         \\
	$x^4- x^3 -2x^2 + 1$ & 1.905166167 &        0.989449155226028         &        0.989601151164740         \\ \hline
\end{tabular}
\\[2ex]
\caption{Lower and upper  bounds on $\dim (\mu_\beta)$  for some Pisot numbers  $\beta$
  of degree 3 or 4. }
\label{tab:degree5}
\end{center}
\end{table}

\section{Final remarks}
\label{S-8}
In this section, we give several final remarks.

\begin{remark} One could obtain further sharper uniform lower bounds on the dimension of Bernoulli convolutions if he/she partitions the interval $[\beta_3- 10^{-8}, \beta_3+ 10^{-8}]$ into  subintervals of length much smaller than $10^{-10}$ and manages to compute the (local) unform lower bounds of $\dim \mu_\beta$ associated to these subintervals (in which taking $N>13$ and an iteration time $L>40$).
\end{remark}

\begin{remark} The method of estimating projection entropies can be also used to find lower bounds for the dimension of certain self-affine measures. To be more precise, let $\mu$ be the self-affine measure associated with an affine IFS $\{S_i(x)=A_ix+a_i\}_{i=1}^\ell$ on $\R^d$ and a probability vector $(p_1,\ldots, p_\ell)$, where $A_i$ are contracting invertible $d\times d$ matrices.  Suppose that these matrices are all diagonal, then $\dim(\mu)$ can be expressed as the linear combinations of projection entropies associated with several coding maps (see \cite[Theorems~2.11--2.12]{FengHu2009}), hence we can provide lower bounds for $\dim(\mu)$ by estimating these projection entropies. Below we give a concrete example.
\end{remark}

\begin{example}
\label{example-8.1}
Let $\mu$ be the self-affine measure associated with an affine IFS $\{S_1, S_2\}$ on $\R^2$ and the probability vector $(1/2, 1/2)$, where
$$
 S_1(x,y)=\left(\frac{x}{\alpha},\; \frac{y}{\beta}\right), \quad S_2(x,y)=\left(\frac{x}{\alpha}+1-\frac{1}{\alpha},\; \frac{y}{\beta}+1-\frac{1}{\beta}\right),
$$
and $\alpha,\beta$ are two parameters with $1<\alpha<\beta<2$.
Let $m=\prod_{n=1}^\infty\{1/2,1/2\}$, and let $\pi$, $\pi_1$ denote the canonical coding maps associated with the IFSs $\{S_1, S_2\}$ and $\{\alpha^{-1}x, \alpha^{-1}x+1-\alpha^{-1}\}$,  respectively. Then by \cite[Theorem~2.11]{FengHu2009},
\begin{align*}
\dim(\mu) &=\left(\frac{1}{\log \alpha}-\frac{1}{\log \beta}\right)h_{\pi_1}(\sigma,m)+\frac{1}{\log \beta}h_\pi(\sigma,m)\\
&=\left(\frac{1}{\log \alpha}-\frac{1}{\log \beta}\right)\left(\log 2-H_m(\mathcal P|\pi_1^{-1}\mathcal B(\R)\right)+\frac{\log 2-H_m\left(\mathcal P|\pi^{-1}\mathcal B(\R^2)\right)}{\log \beta},
\end{align*}
where $\mathcal P=\{[1], [2]\}$ is the natural partition of $\{1,2\}^\N$. For each given pair $(\alpha,\beta)$, we can  apply \eqref{e-U} to numerically estimate the conditional entropies $H_m(\mathcal P|\pi_1^{-1}\mathcal B(\R))$ and $H_m(\mathcal P|\pi^{-1}\mathcal B(\R^2))$ from above, which will lead to a lower bound on $\dim \mu$. In Table \ref{tab:affine}, we
present our computations results for some chosen parameters $(\alpha,\beta)$.

{\rm
\begin{table}
\begin{center}
\small
\begin{tabular}{c|c|c|c}
	          $(\alpha, \beta)$ & Upper bound on    & Upper bound on & Lower bound on $\dim(\mu)$  \\
& $H_m(\mathcal P|\pi_1^{-1}\mathcal B(\R))$ & $H_m(\mathcal P|\pi^{-1}\mathcal B(\R^2))$ & \\
 \hline
 $(1.2, 1.4)$ & 0.594519457635335  & 0.381337271355742   &    1.17453512577913         \\
$(1.3, 1.7)$ & 0.443677110679849   & 0.011983272405036 & 1.76440615371483 \\
$(1.4, 1.8)$ & 0.358042443198116   & 0.000020606397791 & 1.60503743978513 \\
	    $(1.5, 1.6)$               & 0.287856436058623  &   0.010339956295331   &    1.59002600229069         \\
        $(1.5, 1.7)$               & 0.287856436058623  &   0.000000000000000   &    1.54205227015933         \\
        $(1.7, 1.71)$              & 0.165391636766638  &   0.105399412288277   &    1.10640907763501   \\
        $(1.8, 1.9)$               & 0.064653016798699  &   0.035399973460099   &    1.03287878328719        \\
 \hline
\end{tabular}
\\[2ex]
\caption{Lower bounds  on the dimension of  $\mu$ in Example \ref{example-8.1}.}
\label{tab:affine}
\end{center}
\end{table}
}

\end{example}

\medskip
\medskip

{\noindent \bf  Acknowledgements}.  This research was partially supported by the General Research Fund CUHK14304119 from the Hong Kong Research Grant Council and  a direct grant for
research from the Chinese University of Hong Kong. The authors would like to thank the referee for helpful comments and suggestions.

\end{document}